\numberwithin{equation}{section}
\def\XXint#1#2#3{{\setbox0=\hbox{$#1{#2#3}{\int}$ }
		\vcenter{\hbox{$#2#3$ }}\kern-.6\wd0}}
\newlength{\leftstackrelawd}
\newlength{\leftstackrelbwd}
\def\leftstackrel#1#2{\settowidth{\leftstackrelawd}%
	{${{}^{#1}}$}\settowidth{\leftstackrelbwd}{$#2$}%
	\addtolength{\leftstackrelawd}{-\leftstackrelbwd}%
	\leavevmode\ifthenelse{\lengthtest{\leftstackrelawd>0pt}}%
	{\kern-.5\leftstackrelawd}{}\mathrel{\mathop{#2}\limits^{#1}}}
\theoremstyle{plain}
\newtheorem{thm}{Theorem}[section]
\newtheorem{lem}[thm]{Lemma}
\newtheorem{prop}[thm]{Proposition}
\newtheorem*{thm*}{Theorem}
\theoremstyle{definition}
\newtheorem{rmk}[thm]{Remark}
\newtheorem{?}[thm]{Problem}
\newenvironment{customthm}[1]
{\innercustomthm}
{\endinnercustomthm}
\newcommand{\ep}{\varepsilon}
\renewcommand{\phi}{\varphi}
\renewcommand{\epsilon}{\varepsilon}
\def\@cite#1#2{[\textbf{#1\if@tempswa , #2\fi}]}
\def\@biblabel#1{[\textbf{#1}]}
\newcommand*{\defeq}{\mathrel{\rlap{%
			\raisebox{0.3ex}{$\m@th\cdot$}}%
		\raisebox{-0.3ex}{$\m@th\cdot$}}%
	=}
\newcommand*{\eqdef}{=\mathrel{\rlap{%
			\raisebox{0.3ex}{$\m@th\cdot$}}%
		\raisebox{-0.3ex}{$\m@th\cdot$}}%
	}
\newcounter{marnote}
\def\underbracex#1#2{\mathop{\vtop{\m@th\ialign{##\crcr
				$\hfil\displaystyle{#2}\hfil$\crcr
				\noalign{\kern3\p@\nointerlineskip}%
				#1\crcr\noalign{\kern3\p@}}}}\limits}
\def\upbracefilla{$\m@th \setbox\z@\hbox{$\braceld$}%
	\bracelu\leaders\vrule \@height\ht\z@ \@depth\z@\hfill 
	\kern\p@\vrule \@width\p@\kern\p@\vrule \@width\p@\kern\p@\vrule \@width\p@
	$}
\def\upbracefillb{$\m@th \setbox\z@\hbox{$\braceld$}%
	\vrule \@width\p@\kern\p@\vrule \@width\p@\kern\p@\vrule \@width\p@\kern\p@
	\leaders\vrule \@height\ht\z@ \@depth\z@\hfill\bracerd
	\braceld\leaders\vrule \@height\ht\z@ \@depth\z@\hfill
	\kern\p@\vrule \@width\p@\kern\p@\vrule \@width\p@\kern\p@\vrule \@width\p@
	$}
\def\upbracefillc{$\m@th \setbox\z@\hbox{$\braceld$}%
	\vrule \@width\p@\kern\p@\vrule \@width\p@\kern\p@\vrule \@width\p@\kern\p@
	\leaders\vrule \@height\ht\z@ \@depth\z@\hfill
	\kern\p@\vrule \@width\p@\kern\p@\vrule \@width\p@\kern\p@\vrule \@width\p@
	$}
\def\upbracefilld{$\m@th \setbox\z@\hbox{$\braceld$}%
	\vrule \@width\p@\kern\p@\vrule \@width\p@\kern\p@\vrule \@width\p@\kern\p@
	\leaders\vrule \@height\ht\z@ \@depth\z@\hfill\braceru$}
\def\upbracefillbd{$\m@th \setbox\z@\hbox{$\braceld$}%
	\vrule \@width\p@\kern\p@\vrule \@width\p@\kern\p@\vrule \@width\p@\kern\p@
	\bracerd\braceld
	\leaders\vrule \@height\ht\z@ \@depth\z@\hfill\braceru$}
\begin{document}
	\title{\vspace{-1.1cm}The $\sigma_k$-Loewner-Nirenberg problem on Riemannian manifolds for $k<\frac{n}{2}$}
	\author{Jonah A. J. Duncan\footnote{Johns Hopkins University, 404 Krieger Hall, Department of Mathematics, 3400 N. Charles Street, Baltimore, MD 21218, US. Email: jdunca33@jhu.edu.} ~and Luc Nguyen\footnote{Mathematical Institute and St Edmund Hall, University of Oxford, Andrew Wiles Building, Radcliffe Observatory Quarter, Woodstock Road, OX2 6GG, UK. Email: luc.nguyen@maths.ox.ac.uk.}}
\maketitle
\vspace*{-7mm}
	\begin{abstract}
		Let $(M^n,g_0)$ be a smooth compact Riemannian manifold of dimension $n\geq 3$ with non-empty boundary $\partial M$. Let $\Gamma\subset\mathbb{R}^n$ be a symmetric convex cone and $f$ a symmetric defining function for $\Gamma$ satisfying standard assumptions. Under an algebraic condition on $\Gamma$, which is satisfied for example by the G\r{a}rding cones $\Gamma_k^+$ when $k<\frac{n}{2}$, we prove the existence of a Lipschitz viscosity solution $g_u = e^{2u}g_0$ to the fully nonlinear Loewner-Nirenberg problem associated to $(f,\Gamma)$,
		\begin{align*}
		\begin{cases} f(\lambda(-g_u^{-1}A_{g_u})) = 1, \quad \lambda(-g_u^{-1}A_{g_u}) \in \Gamma & \mathrm{on~}M\backslash\partial M \\
		u(x)\rightarrow+\infty & \mathrm{as~}\operatorname{dist}_{g_0}(x,\partial M)\rightarrow 0,
		\end{cases} 
		\end{align*}
		where $A_{g_u}$ is the Schouten tensor of $g_u$. Previous results on Euclidean domains show that, in general, $u$ is not differentiable. The solution $u$ is obtained as the limit of smooth solutions to a sequence of fully nonlinear Loewner-Nirenberg problems on approximating cones containing $(1,0,\dots,0)$, for which we also have uniqueness. In the process, we obtain an existence and uniqueness result for the corresponding Dirichlet boundary value problem with finite boundary data, which is also of independent interest. An important feature of our paper is that the existence of a conformal metric $g$ satisfying $\lambda(-g^{-1}A_g)\in\Gamma$ on $M$ is a \textit{consequence} of our results, rather than an assumption. 
	\end{abstract}

	\tableofcontents

	\section{Introduction}
	
	A pertinent theme in conformal geometry is to establish the existence of conformal metrics satisfying some notion of constant curvature. For example, given a compact Riemannian manifold $(M^n,g_0)$ of dimension $n\geq3$ with non-empty boundary $\partial M$, a natural question is whether there exists a conformal metric which is complete on $M\backslash \partial M$ and has constant negative scalar curvature on $M\backslash \partial M$. In the seminal work of Loewner \& Nirenberg \cite{LN74}, the authors proved among other results the existence and uniqueness of such a metric when $M\backslash \partial M$ is a bounded Euclidean domain with smooth boundary\footnote{Loewner \& Nirenberg also considered in \cite{LN74} the problem on a class of non-smooth Euclidean domains, but we will not be concerned with such generalisations in this paper.} and $g_0$ is the flat metric. Aviles \& McOwen \cite{AM88} later extended this result to the Riemannian setting, and for some further related results we refer e.g.~to the work of Allen et.~al.~\cite{AILA18}, Andersson et.~al.~\cite{ACF92}, Aviles \cite{Av82}, Finn \cite{Finn98}, Gover \& Waldron \cite{GW17}, Graham \cite{Gr17}, Han et.~al.~\cite{HJS20}, Han \& Shen \cite{HS20}, Jiang \cite{Jia21}, Li \cite{Li22}, Mazzeo \cite{Maz91} and V\'eron \cite{Ver81}. We note that the related problem of finding conformal metrics with constant scalar curvature on closed manifolds, known as the Yamabe problem, was solved by the combined works of Yamabe \cite{Yam60}, Trudinger \cite{Tru68}, Aubin \cite{Aub70} and Schoen \cite{Sch84}.

	Since the work of Viaclovsky \cite{Via00a} and Chang, Gursky \& Yang \cite{CGY02a}, there has been significant interest in fully nonlinear generalisations of Yamabe-type problems, including on manifolds with boundary. Suppose that 
	\begin{align}
	& \Gamma\subset\mathbb{R}^n\text{ is an open, convex, connected symmetric cone with vertex at 0}, \label{21'} \\
	& \Gamma_n^+ = \{\lambda\in\mathbb{R}^n: \lambda_i > 0 ~\forall ~1\leq i \leq n\} \subseteq \Gamma \subseteq \Gamma_1^+ =  \{\lambda\in\mathbb{R}^n : \lambda_1+\dots+\lambda_n > 0\}, \label{22'} \\
	& f\in C^\infty(\Gamma)\cap C^0(\overline{\Gamma}) \text{ is concave, 1-homogeneous and symmetric in the }\lambda_i, \label{23'}  \\
	& f>0 \text{ in }\Gamma, \quad f = 0 \text{ on }\partial\Gamma, \quad f_{\lambda_i} >0 \text{ in } \Gamma \text{ for }1 \leq i \leq n. \label{24'}
	\end{align}
	In this paper, we study the natural generalisation of the Loewner-Nirenberg problem to the fully nonlinear setting on Riemannian manifolds. That is, for $(f,\Gamma)$ satisfying \eqref{21'}--\eqref{24'} and a compact Riemannian manifold $(M,g_0)$ with non-empty boundary $\partial M$, we study the existence and uniqueness of a conformal metric $g_u = e^{2u}g_0$ satisfying
	\begin{equation}\label{113}
	\begin{cases} f(\lambda(-g_u^{-1}A_{g_u})) = 1, \quad \lambda(-g_u^{-1}A_{g_u}) \in \Gamma & \mathrm{on~}M\backslash \partial M \\
	u(x)\rightarrow+\infty & \mathrm{as~}\operatorname{d}(x,\partial M)\rightarrow 0.
	\end{cases} 
	\end{equation}
	Here, 
	\begin{align*}
A_g = \frac{1}{n-2}\bigg(\operatorname{Ric}_g - \frac{R_g}{2(n-1)}g\bigg)
\end{align*}
	denotes the $(0,2)$-Schouten tensor of a Riemannian metric $g$, $\operatorname{Ric}_g$ and $R_g$ respectively denote the Ricci curvature tensor and scalar curvature of $g$, $\lambda(T)$ denotes the vector of eigenvalues of a $(1,1)$-tensor $T$, and $\operatorname{d}(x,\partial M)$ is the distance from $x\in M$ to $\partial M$ with respect to $g_0$. Typical examples of $(f,\Gamma)$ satisfying \eqref{21'}--\eqref{24'} are given by $(\sigma_k^{1/k},\Gamma_k^+)$ for $1\leq k \leq n$, where $\sigma_k$ is the $k$'th elementary symmetric polynomial and $\Gamma_k^+ = \{\lambda\in\mathbb{R}^n:\sigma_j(\lambda)>0~\forall~1\leq j \leq k\}$. When $f=\sigma_1$, \eqref{113} reduces to the original Loewner-Nirenberg problem on Riemannian manifolds discussed above.

	 Much of the motivation to study \eqref{113} stems from the fact that, as a consequence of the Ricci decomposition, the Schouten tensor fully determines the conformal transformation properties of the full Riemann curvature tensor. We note that for $g_u = e^{2u}g_0$, one has the conformal transformation law 
	 \begin{equation}\label{118}
	 A_{g_u} = -\nabla_{g_0}^2 u  - \frac{1}{2}|\nabla_{g_0} u|_{g_0}^2 g_0 + du\otimes du + A_{g_0},
	 \end{equation}
	 which demonstrates the fully nonlinear nature of \eqref{113} when $f\not=c\sigma_1$. Moreover, \eqref{113} is non-uniformly elliptic when $f\not=c\sigma_1$.

By the 1-homogeneity of $f$, without loss of generality we may assume 
	\begin{align}\label{25'}
	f\bigg(\frac{1}{2},\dots, \frac{1}{2}\bigg) = 1. 
	\end{align}
	As in \cite{LN14b}, we define $\mu_\Gamma^+$ to be the number satisfying
	\begin{align*}
	(-\mu_\Gamma^+,1,\dots,1)\in\partial\Gamma.
	\end{align*}
	We note that $\mu_\Gamma^+$ is uniquely determined by $\Gamma$ and is easily seen to satisfy $\mu_\Gamma^+\in[0,n-1]$. When $\Gamma = \Gamma_k^+$, one has $\mu_{\Gamma_k^+}^+ = \frac{n-k}{k}$.
	
	Our first main result concerns the solution to the Loewner-Nirenberg problem \eqref{113} under the assumption 
	\begin{align}\label{418}
	\mu_\Gamma^+>1.
	\end{align}
Observe that for $\Gamma = \Gamma_k^+$,  \eqref{418} holds if and only if $k<\frac{n}{2}$. The role of condition \eqref{418} will be discussed later in the introduction.

	\begin{thm}\label{A}
		Let $(M,g_0)$ be a smooth compact Riemannian manifold of dimension $n\geq 3$ with non-empty boundary $\partial M$, and suppose $(f,\Gamma)$ satisfies \eqref{21'}--\eqref{24'}, \eqref{25'} and \eqref{418}. Then there exists a locally Lipschitz viscosity solution to \eqref{113} satisfying
		\begin{align}\label{114}
		\lim_{\operatorname{d}(x,\partial M)\rightarrow 0}\big(u(x) + \ln \operatorname{d}(x,\partial M)\big) = 0,
		\end{align}
		which is maximal in the sense that if $\widetilde{u}$ is any continuous viscosity solution to \eqref{113}, then $\widetilde{u} \leq u$ on $M\backslash \partial M$. Moreover, when $(1,0,\dots,0)\in\Gamma$, the solution $u$ is smooth and is the unique continuous viscosity solution to \eqref{113}. 
	\end{thm}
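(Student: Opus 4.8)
The plan is to prove the theorem in two stages: first for cones with $(1,0,\dots,0)\in\Gamma$, for which we obtain a smooth, unique solution, and then for general $\Gamma$ satisfying \eqref{418} by approximating $\Gamma$ from outside by cones of the first type and passing to a monotone limit. Two ingredients underpin both stages. The first is the solvability and uniqueness of the Dirichlet problem $f(\lambda(-g_u^{-1}A_{g_u}))=1$, $\lambda(-g_u^{-1}A_{g_u})\in\Gamma$ on $M\setminus\partial M$, $u=\varphi$ on $\partial M$, for prescribed $\varphi\in C^\infty(\partial M)$. The second is a supply of complete barriers that makes no use of any curvature hypothesis on $g_0$ --- which is precisely why the existence of an admissible conformal metric ends up being a conclusion rather than an assumption. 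On each sufficiently small geodesic ball $B_\rho(p)\subset M\setminus\partial M$, the conformal metric $g_v=e^{2v}g_0$ with $v=-\ln(\rho^2-\operatorname{d}(\cdot,p)^2)+\ln(2\rho)$ satisfies $\lambda(-g_v^{-1}A_{g_v})=(\tfrac12,\dots,\tfrac12)+O(\rho^2)$ throughout $B_\rho(p)$, so after a constant shift of order $O(\rho^2)$ in $v$ it becomes a complete subsolution (resp.\ supersolution) on $B_\rho(p)$; a maximum (resp.\ minimum) of such functions over a Whitney-type cover of $M\setminus\partial M$ then yields a complete subsolution (resp.\ supersolution) of \eqref{113} blowing up like $-\ln\operatorname{d}(\cdot,\partial M)+O(1)$. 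Refining these near $\partial M$ to barriers of the form $-\ln\operatorname{d}(\cdot,\partial M)+a(\cdot)\operatorname{d}(\cdot,\partial M)+\cdots$ that match the model behaviour to higher order pins the blow-up rate and yields the sharp asymptotics \eqref{114}; it is in this refinement that condition \eqref{418}, equivalently $(-1,1,\dots,1)\in\Gamma$, is decisive.

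In the first stage, assuming $(1,0,\dots,0)\in\Gamma$, the Dirichlet problem with finite data is solved by the continuity method: a global $C^0$ estimate and a boundary gradient estimate from the barriers above, an interior gradient estimate, a global second-order estimate together with the Evans--Krylov estimate using the concavity of $f$, and Schauder bootstrapping supply the required a priori bounds, while comparison gives uniqueness. The complete solution is then obtained by exhaustion: with $\Omega_j=\{x\in M:\operatorname{d}(x,\partial M)\ge\epsilon_j\}$ and $\epsilon_j\downarrow0$, solve on each $\Omega_j$ the problem with $u_j\to+\infty$ on $\partial\Omega_j$ (Perron's method together with the barriers and the Dirichlet solvability), note that $u_j$ is non-increasing in $j$ by comparison and bounded below by the fixed complete subsolution, and conclude that $u_j\to u$ in $C^\infty_{\mathrm{loc}}(M\setminus\partial M)$ --- smoothness coming from the interior estimates --- where $u$ solves \eqref{113} and satisfies \eqref{114}. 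Maximality follows by comparing any continuous viscosity solution $\widetilde u$ with $u_j$ on $\Omega_j$ and letting $j\to\infty$; uniqueness follows by combining maximality with the observation that $u+\ln s$ is a supersolution of \eqref{113} for $s>1$ and a subsolution for $s<1$, together with the fact that every continuous viscosity solution obeys \eqref{114} (forced by the two-sided boundary barriers).

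In the second stage, with $\Gamma$ satisfying \eqref{418} but $(1,0,\dots,0)\notin\Gamma$, choose cones $\Gamma_j\supset\Gamma$ with $(1,0,\dots,0)\in\Gamma_j$, $\inf_j\mu_{\Gamma_j}^+>1$ and $\Gamma_j\downarrow\Gamma$, and defining functions $f_j$ on $\Gamma_j$ satisfying \eqref{21'}--\eqref{24'}, \eqref{25'} with $f_j\ge f_{j+1}\ge f$ on the relevant cones and $f_j\to f$; the first stage provides smooth solutions $u_j$ of the associated problems satisfying \eqref{114}. Since any $(f_{j+1},\Gamma_{j+1})$-solution is an $(f_j,\Gamma_j)$-subsolution, $u_j$ is non-increasing in $j$, and it is bounded below by the fixed complete $(f,\Gamma)$-subsolution (itself an $(f_j,\Gamma_j)$-subsolution), so $u_j$ converges locally uniformly to some $u$. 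The interior \emph{gradient} estimates are uniform in $j$ but the second-order ones are not --- consistent with the known non-differentiability --- so $u$ is only locally Lipschitz; stability of viscosity solutions shows $u$ solves \eqref{113}, and the boundary barriers give \eqref{114}. Finally, any continuous viscosity solution $\widetilde u$ of the $(f,\Gamma)$-problem is a viscosity subsolution of each $(f_j,\Gamma_j)$-problem, so $\widetilde u\le u_j$ by the maximality established in the first stage, whence $\widetilde u\le\lim_j u_j=u$.

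The principal difficulty is not any single estimate but the interplay of three points: (i) solving the equation at all without an a priori admissible conformal metric, resolved by the local hyperbolic-ball barriers; (ii) constructing, on top of these, boundary barriers sharp enough to pin the blow-up to the logarithmic rate and to deliver \eqref{114} --- exactly where $\mu_\Gamma^+>1$ is indispensable, its failure (as for $\Gamma_k^+$ with $k\ge n/2$) reflecting a genuine obstruction --- and (iii) arranging the cone approximation so that precisely the gradient, and not the Hessian, estimates survive in the limit, which is what separates the merely Lipschitz general solution from the smooth one when $(1,0,\dots,0)\in\Gamma$. Verifying the comparison principle in the degenerate generality required by the maximality and uniqueness arguments is the remaining delicate point.
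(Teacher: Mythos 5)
Your overall architecture (two stages, cone approximation from outside, exhaustion, Perron/continuity) is in the same spirit as the paper's, and the Whitney sup/inf of hyperbolic ball barriers is an attractive idea, but it has a gap at exactly the point the paper identifies as its central difficulty: the a priori $C^0$ lower bound for the \emph{finite-data} Dirichlet problem, with no admissibility assumed on $g_0$.

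Your ball subsolution $v=-\ln(\rho^2-\operatorname{d}(\cdot,p)^2)+\ln(2\rho)$ blows up to $+\infty$ at $\partial B_\rho(p)$, i.e.\ at an \emph{interior} sphere. That makes the Whitney sup $\underline u$ a perfectly good lower barrier for a solution that itself blows up at $\partial M$, but it is useless against the Dirichlet solution $u_m$ with $u_m|_{\partial M}=m$: on $\partial M$ (or on $\partial B_\rho(p)$) you have $\underline u=+\infty>u_m$, so the comparison principle has nothing to say, and the claimed ``global $C^0$ estimate from the barriers'' does not follow. The same obstruction reappears when you invoke ``Perron's method together with ... the Dirichlet solvability'' on the exhausting domains: the bump step requires solving small-ball Dirichlet problems, and the lower bound for those runs into the same wall. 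The paper's resolution is genuinely different and is not reproduced by interior balls: it constructs barriers on small \emph{annuli} $A_{r_-,r_+}$ centred in an exterior collar, with $w=m$ on the inner sphere tangent to $\partial M$ and $w\to-\infty$ at the outer sphere (Proposition \ref{54}/Proposition \ref{41}); these compare correctly against a finite-data solution from below. It then propagates the near-boundary lower bound inward using the interior gradient estimate of Theorem \ref{40}, which depends only on a one-sided (upper) $C^0$ bound. Both of these tools use $\mu_\Gamma^+>1$ in an essential way: the annulus profile has exponent $\beta=2/(\mu_\Gamma^+-1)$ and, per Remark \ref{182}, \emph{no} conformal metric in $\Gamma$ with $w\to-\infty$ at a boundary component of an annulus exists when $\mu_\Gamma^+\le1$; and the gradient estimate reduces in the borderline case to membership of $(1,\dots,1,-1)$ in $\Gamma$.

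This also means your account of where $\mu_\Gamma^+>1$ enters is off. You place it in a refinement of boundary barriers ``to higher order'' for the sharp asymptotics; but in the paper the asymptotics in Proposition \ref{105} use the barrier $\phi=-\ln(B(d+ad^2))$, whose Schouten eigenvalues are a perturbation of $(\tfrac12,\dots,\tfrac12)$ and therefore lie in any admissible cone — no $\mu_\Gamma^+>1$ is needed there. The assumption $\mu_\Gamma^+>1$ is instead used precisely in the two places above (the annulus barriers and the one-sided interior gradient estimate) to manufacture the $C^0$ lower bound in the absence of an a priori admissible conformal metric. Finally, a smaller point: the paper does not use an ad hoc approximating family $\Gamma_j$; it uses the specific deformation $\Gamma^\tau=\{\lambda:\tau\lambda+(1-\tau)\sigma_1(\lambda)e\in\Gamma\}$, which simultaneously serves as the continuity path and supplies the uniformity in $\tau$ needed to extract a Lipschitz limit as $\tau\to1$. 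If you want to keep generic $\Gamma_j$, you need to verify that the $C^1$ estimates are uniform over the family and that $\inf_j\mu_{\Gamma_j}^+>1$ can be arranged while $\Gamma_j\downarrow\Gamma$, which is exactly what the explicit $\tau$-deformation delivers for free.
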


We recall that a continuous function $u$ on $M\backslash \partial M$ is a viscosity subsolution (resp.~viscosity supersolution) to the equation in \eqref{113} if for any $x_0\in M\backslash \partial M$ and $\phi\in C^2(M\backslash\partial M)$ satisfying $u(x_0) = \phi(x_0)$ and $u(x)\leq \phi(x)$ near $x_0$ (resp.~$u(x)\geq\phi(x)$ near $x_0$), it holds that $\lambda(-g_\phi^{-1} A_{g_{\phi}})(x_0)\in\{\lambda\in\Gamma:f(\lambda)\geq 1\}$ (resp.~$\lambda(-g_\phi^{-1} A_{g_{\phi}})(x_0)\in\mathbb{R}^n\backslash\{\lambda\in\Gamma:f(\lambda)> 1\}$). We say that $u$ is a viscosity solution to the equation in \eqref{113} if it is both a viscosity subsolution and a viscosity supersolution.

\begin{rmk}
	In previous work studying equations of the form $f(\lambda(-g_u^{-1}A_{g_u}))=1$, it has been typical to assume that the background metric $g_0$ satisfies $\lambda(-g_0^{-1}A_{g_0})\in\Gamma$ on $M$ (a notable exception is a result of Gursky, Streets \& Warren \cite{GSW11}, which will be discussed later in the introduction). In contrast, one of the key points of this paper is that we do not assume the existence of such a metric in Theorem \ref{A}. Rather, the existence of such a metric is established at an intermediate stage of the proof of Theorem \ref{A} (see Theorem \ref{55}).
\end{rmk}

\begin{rmk}
	In the case that $M\backslash \partial M$ is a Euclidean domain, the existence of a Lipschitz viscosity solution to \eqref{113} was established by Gonz\'ales, Li \& Nguyen in \cite{GLN18}. It was also shown in \cite{GLN18} that this solution is unique among continuous viscosity solutions. We note that the uniqueness of the viscosity solution obtained in Theorem \ref{A} remains an open problem when $M\backslash \partial M$ is not a Euclidean domain and $(1,0,\dots,0)\in\partial\Gamma$.
\end{rmk}

\begin{rmk}
	In \cite{LN20b, LNX22} it was shown that if $M\backslash \partial M$ is a Euclidean domain with disconnected boundary and $\Gamma\subset\Gamma_2^+$ (in particular, this implies $(1,0,\dots,0)\in\partial\Gamma$), then the Lipschitz viscosity solution to \eqref{113} is not differentiable. Thus, in general, the Lipschitz regularity of the solution in Theorem \ref{A} cannot be improved to $C^1$ regularity when $(1,0,\dots,0)\in\partial \Gamma$. On the other hand, the smoothness of the solution in Theorem \ref{A} when $(1,0,\dots,0)\in\Gamma$ is new even when $M\backslash \partial M$ is a Euclidean domain. This smoothness result can be viewed as an analogue of the result of Gursky \& Viaclovsky \cite{GV03b} on the existence of a smooth solution to the $\sigma_k$-Yamabe problem for the trace-modified Schouten tensor on closed manifolds. 
\end{rmk}

To describe the proof of Theorem \ref{A}, we first introduce some notation and an equivalent formulation of the result. For $\tau\in[0,1]$, $\lambda\in\mathbb{R}^n$ and $e=(1,\dots,1)\in\mathbb{R}^n$, we define 
\begin{equation*}
\lambda^\tau \defeq \tau\lambda + (1-\tau)\sigma_1(\lambda)e, \quad f^\tau(\lambda) \defeq \frac{1}{\tau+n(1-\tau)} f(\lambda^\tau) \quad \text{and} \quad \Gamma^\tau \defeq \{\lambda:\lambda^\tau\in\Gamma\}. 
\end{equation*}
As shown in \cite[Appendix A]{DN22}, $\Gamma$ satisfies \eqref{21'}, \eqref{22'} and $(1,0,\dots,0)\in\Gamma$ if and only if there exists $\widetilde{\Gamma}$ satisfying \eqref{21'}, \eqref{22'} and a number $\tau<1$ for which $\Gamma = (\widetilde{\Gamma})^\tau$. Note that \eqref{25'} implies $f^\tau(\frac{1}{2},\dots,\frac{1}{2})=1$. An equivalent formulation of Theorem \ref{A} is then as follows:

\begin{customthm}{\ref{A}$'$}\label{A'}
	\textit{Let $(M,g_0)$ be a smooth compact Riemannian manifold of dimension $n\geq 3$ with non-empty boundary $\partial M$, and suppose $(f,\Gamma)$ satisfies \eqref{21'}--\eqref{24'}, \eqref{25'} and \eqref{418}. Then for each $\tau<1$, there exists a smooth solution $u$ to
		\begin{equation}\label{-113}
		\begin{cases} f^\tau(\lambda(-g_u^{-1}A_{g_u})) = 1, \quad \lambda(-g_u^{-1}A_{g_u}) \in \Gamma^\tau & \mathrm{on~}M\backslash\partial M \\
		u(x)\rightarrow+\infty & \mathrm{as~}\operatorname{d}(x,\partial M)\rightarrow 0,
		\end{cases} 
		\end{equation}
		and moreover $u$ satisfies \eqref{114} and is the unique continuous viscosity solution to \eqref{-113}. When $\tau=1$, there exists a Lipschitz viscosity solution $u$ to \eqref{-113} satisfying \eqref{114}, which is maximal in the sense that if $\widetilde{u}$ is any continuous viscosity solution to \eqref{-113}, then $\widetilde{u} \leq u$ on $M\backslash \partial M$.}
\end{customthm}

\begin{rmk}
	If we label the solution to \eqref{-113} in Theorem \ref{A'} as $u^\tau$ for each $\tau\leq 1$, then we will show that for each compact set $K\subset M\backslash \partial M$, there exists a constant $C$ which is independent of $\tau$ but dependent on $M,g_0, f,\Gamma$ and $K$ such that 
	\begin{align*}
	\|u^\tau\|_{C^{0,1}(K)} \leq C \quad \text{for all } \tau\in[0,1].
	\end{align*}
\end{rmk}

In the proof of Theorem \ref{A'}, we will first prove the existence of a unique smooth solution to \eqref{-113} when $\tau<1$. The Lipschitz viscosity solution in the case $\tau=1$ is then obtained in the limit as $\tau\rightarrow 1$. In turn, for each $\tau<1$, the existence of a smooth solution to \eqref{-113} is obtained as the limit of smooth solutions to Dirichlet boundary value problems with finite boundary data. Although we only need to consider constant boundary data in the proof of Theorem \ref{A'}, we will prove the following more general result: 

\begin{thm}\label{55}
	Let $(M,g_0)$ be a smooth compact Riemannian manifold of dimension $n\geq 3$ with non-empty boundary $\partial M$, and suppose $(f,\Gamma)$ satisfies \eqref{21'}--\eqref{24'} and \eqref{418}. Let $\psi\in C^\infty(M)$ be positive and $\xi\in C^\infty(\partial M)$. Then for each $\tau<1$, there exists a smooth solution $u$ to 
	\begin{equation}\label{12}
	\begin{cases}
	f^\tau(\lambda(-g_u^{-1}A_{g_u})) = \psi, \quad \lambda(-g_u^{-1}A_{g_u})\in\Gamma^\tau & \mathrm{on~}M\backslash\partial M \\
	u = \xi & \mathrm{on~}\partial M,
	\end{cases}
	\end{equation}
	and moreover $u$ is the unique continuous viscosity solution to \eqref{12}. When $\tau=1$, there exists a Lipschitz viscosity solution to \eqref{12}.
\end{thm}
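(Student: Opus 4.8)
The plan is to prove the $\tau<1$ case by a continuity/degree argument yielding a \emph{smooth} solution, and then to obtain the $\tau=1$ case by passing to the limit $\tau\to1$ in the smooth solutions $u^\tau$ of the $\tau<1$ problems with the same data $(\psi,\xi)$. Using the conformal law \eqref{118}, I would first rewrite \eqref{12} as a fully nonlinear second order PDE $G^\tau[u]\defeq f^\tau\big(\lambda(g_0^{-1}(-A_{g_u}))\big)=e^{2u}\psi$ for $u$, where $-A_{g_u}=\nabla_{g_0}^2u+\tfrac12|\nabla_{g_0}u|_{g_0}^2g_0-du\otimes du-A_{g_0}$ depends on $u$ only through its derivatives. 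Three structural facts are used throughout: the equation is concave in $\nabla^2u$ (concavity of $f$); it is \emph{proper}, since replacing $u$ by $u+c$ multiplies $\lambda(-g_u^{-1}A_{g_u})$ by $e^{-2c}$, so $G^\tau[\,\cdot\,]-e^{2u}\psi$ is strictly decreasing in $u$; and for $\tau<1$ one has $(1,0,\dots,0)\in\Gamma^\tau$ (indeed $(1,1-\tau,\dots,1-\tau)\in\Gamma_n^+\subseteq\Gamma$), which upgrades degenerate ellipticity to the uniform ellipticity underlying smoothness.

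For the $C^0$ estimate and barriers, a supersolution is cheap: since $\Gamma^\tau\subseteq\Gamma_1^+$ and $f^\tau$ vanishes on $\partial\Gamma^\tau$, at an interior maximum of a solution $u$ one has $g_0^{-1}(-A_{g_u})\le -g_0^{-1}A_{g_0}$, so monotonicity of $f^\tau$ forces $\psi\le e^{-2u}f^\tau(\lambda(-g_0^{-1}A_{g_0}))$ there, hence an a priori upper bound $u\le C$ (equivalently, a large constant is a supersolution). The subsolution is where \eqref{418} enters. I would fix a defining function $\rho\in C^\infty(M)$ with $\rho>0$ in $M\setminus\partial M$, $\rho=0$ and $d\rho\ne0$ on $\partial M$, extend $\xi$ to $\tilde\xi\in C^\infty(M)$, and set $\underline u=\tilde\xi-t\rho+\sum_i s_i\chi_i$ with $\chi_i\ge 0$ interior-supported bumps that are uniformly convex near the finitely many critical points of $\rho$. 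Away from those points the order-$t^2$ part of $-A_{g_{\underline u}}$ is $t^2(\tfrac12|\nabla\rho|^2g_0-d\rho\otimes d\rho)$, whose eigenvalues are $t^2|\nabla\rho|^2(-\tfrac12,\tfrac12,\dots,\tfrac12)$; by \eqref{418} the vector $(-\tfrac12,\tfrac12,\dots,\tfrac12)$ lies in the interior of $\Gamma$, hence of $\Gamma^\tau$ for all $\tau\in[0,1]$, so for $t$ large $\lambda(-g_{\underline u}^{-1}A_{g_{\underline u}})\in\Gamma^\tau$ with $f^\tau\to\infty$; near the critical points of $\rho$ the convex bumps make the Hessian term dominate and push $\lambda$ into $\Gamma_n^+$. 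Thus $\underline u$ is a subsolution with $\underline u=\xi$ on $\partial M$, and (for $\tau<1$) the comparison principle gives $\underline u\le u$. Standard normal/tangential barriers near $\partial M$ built from $\underline u$, the supersolution and the equation then give the boundary gradient and boundary second order estimates.

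With the $C^0$ bound in hand I would next establish a global gradient estimate by a Bernstein-type maximum principle applied to $|\nabla u|^2$ with an exponential weight in $u$, using only \eqref{21'}--\eqref{24'}; and then the global second order estimate, i.e.\ an a priori upper bound for the eigenvalues $\lambda(-g_u^{-1}A_{g_u})$ (the lower bound being automatic since $\lambda\in\Gamma^\tau$ and $f^\tau=e^{2u}\psi$ lies between positive constants), by the maximum principle applied to a test quantity of the form $\max_{|e|_{g_0}=1}(-A_{g_u})(e,e)$ times an exponential weight in $|\nabla u|$ and $u$, using concavity of $f^\tau$, the conformal structure \eqref{118}, and $(1,0,\dots,0)\in\Gamma^\tau$. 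Since $f^\tau=e^{2u}\psi>0$ confines $\lambda$ to a compact subset of $\Gamma^\tau$, the equation is uniformly elliptic on the relevant range, so Evans--Krylov upgrades the $C^2$ bound to a $C^{2,\alpha}$ bound and Schauder theory bootstraps to $C^{k,\alpha}$ bounds for every $k$. Existence of a smooth solution then follows by a Leray--Schauder degree argument (the a priori estimates confine solutions to a fixed ball in $C^{2,\alpha}$, and the sub/supersolution pair seeds a homotopy to a model problem), the linearisation being invertible because it is uniformly elliptic with strictly negative zeroth order coefficient $-2e^{2u}\psi$. Uniqueness among continuous viscosity solutions is then the comparison principle for \eqref{12}, valid because the equation is degenerate elliptic and proper.

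Finally, for $\tau=1$, let $u^\tau$ ($\tau<1$) be the smooth solution with data $(\psi,\xi)$. The $C^0$ bound above is uniform in $\tau$: the upper bound is the direct maximum principle argument, and the lower bound comes from a single subsolution $\underline u$ valid for all $\tau\in[0,1]$, since its defining inequalities hold uniformly (in particular $(-\tfrac12,\tfrac12,\dots,\tfrac12)\in\Gamma^\tau$ and $f^\tau(-\tfrac12,\tfrac12,\dots,\tfrac12)\ge c_0>0$ for all $\tau\in[0,1]$, and $\underline u$ is bounded above independently of $\tau$). The gradient estimate uses only \eqref{21'}--\eqref{24'} and is likewise uniform, so $\|u^\tau\|_{C^{0,1}(M)}\le C$ independently of $\tau$. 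By Arzel\`a--Ascoli a subsequence $u^{\tau_j}\to u$ uniformly on $M$ with $u$ Lipschitz and $u=\xi$ on $\partial M$, and since $f^\tau\to f$ and $\Gamma^\tau\to\Gamma$ locally uniformly as $\tau\to1$, stability of viscosity solutions under uniform convergence shows $u$ solves \eqref{12} with $\tau=1$. I expect the main obstacle to be the global second order estimate when $\tau<1$ --- the interior estimate and, above all, the double normal derivative estimate at $\partial M$ --- carried out \emph{without} assuming the existence of a background metric with $\lambda(-g_0^{-1}A_{g_0})\in\Gamma$; a secondary difficulty is arranging the $C^0$ and gradient estimates to be genuinely uniform as $\tau\uparrow 1$, which is what makes the Lipschitz solution at $\tau=1$ reachable.
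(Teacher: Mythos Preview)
Your overall architecture matches the paper's: continuity method for $\tau<1$ via a priori $C^0/C^1/C^2$ estimates, Evans--Krylov, then $\tau\to1$ for the Lipschitz viscosity solution. The upper bound, the boundary Hessian estimates (following Guan), and the passage to $\tau=1$ are essentially as in the paper.

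The substantive divergence --- and a genuine gap --- is in the \emph{lower $C^0$ bound}. You propose a global subsolution $\underline u=\tilde\xi-t\rho+\sum_i s_i\chi_i$, relying on the correct and attractive observation that the gradient-term eigenvalues $|\nabla\underline u|^2(-\tfrac12,\tfrac12,\dots,\tfrac12)$ lie in $\Gamma$ precisely when $\mu_\Gamma^+>1$. But the bump construction at a saddle $p_i$ of $\rho$ has a real tension you have not resolved. If $\chi_i\ge 0$ is convex near $p_i$ with its minimum there, then $\nabla\chi_i(p_i)=0$ as well, so the gradient terms at $p_i$ do not scale with $t,s_i$; forcing $\lambda\in\Gamma$ via $s_i\nabla^2\chi_i$ then needs $s_i\gtrsim t$. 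In the transition annulus of $\chi_i$, however, $\nabla^2\chi_i$ acquires negative eigenvalues of order one, contributing a bad Hessian term of order $s_i$, while $\nabla\rho$ and $\nabla\chi_i$ (both vanishing at $p_i$) may partially cancel --- so one is pushed towards $t\gg s_i$. Reconciling these regimes, and tracking where the new critical points of $\underline u$ land, is not routine; the paper in fact flags exactly this difficulty (the Schouten gradient terms ``do not have an overall sign'', unlike the Ricci case treated by Gursky--Streets--Warren).

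The paper sidesteps this entirely with a two-step argument. First, it proves a local interior gradient estimate (Theorem~\ref{40}) depending only on $\sup_{B_r} u$, not on $\inf u$; this is a substantial independent result and is where $\mu_\Gamma^+>1$ enters (the worst-case eigenvalue configuration at a maximum of $\rho|\nabla u|^2$ is a perturbation of $(1,\dots,1,-1)\tfrac{|\nabla u|^2}{2}\in\Gamma$). Second, it constructs explicit radial subsolutions on thin geodesic annuli centred in a collar just outside $\partial M$ (Proposition~\ref{54}, again using $\mu_\Gamma^+>1$), yielding $u\ge\inf_{\partial M}\xi-1$ in a fixed collar $M_\delta$. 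Integrating the one-sided gradient bound from $\partial M_\delta$ inward then gives the global lower bound. So the ``main obstacle'' is not the $C^2$ estimate (which follows Guan once $C^1$ is in hand) but the lower $C^0$ bound, and your proposed route to it is incomplete. If you can make the global subsolution rigorous, it would be a more direct alternative; conversely, the paper's approach delivers the one-sided gradient estimate as an independent byproduct and avoids any global ansatz.
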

\begin{rmk}
	If we label the solution to \eqref{12} in Theorem \ref{55} as $u^\tau$ for each $\tau\leq1$, then we will show that there exists a constant $C$ which is independent of $\tau$ but dependent on $M,g_0, f,\Gamma, \psi$ and $\xi$ such that
	\begin{align*}
	\|u^\tau\|_{C^{0,1}(M)}\leq C \quad \text{for all }\tau\in[0,1].
	\end{align*}
\end{rmk}

The existence of a smooth solution to \eqref{12} when $\tau<1$ is achieved using the continuity method, which relies on obtaining \textit{a priori} estimates. To keep the introduction concise, we only discuss the $C^0$ estimates here and postpone the discussion of the other estimates to the main body of the paper. Now, if one assumes $\lambda(-g_0^{-1}A_{g_0})\in\Gamma$ on $M$, then it is straightforward to obtain both the \textit{a priori} upper and lower bounds on solutions to \eqref{12}. Since we do not make such an assumption on $g_0$, a large portion of our work involves proving the lower bound. The \textit{a priori} lower bound is obtained in two independent stages, which can be summarised as follows:
\begin{enumerate}
	\item First, in Section \ref{14}, we prove a local interior gradient estimate on solutions to \eqref{12} of the form 
	\begin{equation}\label{69'}
	|\nabla_{g_0} u|_{g_0}(x) \leq C(r^{-1} + e^{\sup_{B_r}u}) \quad\text{for }x\in B_{r/2},
	\end{equation}
	where $B_r$ is a geodesic ball contained in the interior of $M$. An important feature is that the estimate \eqref{69'} does not depend on a lower bound for $u$. 
	
	\item Second, in Section \ref{147}, we construct suitable barrier functions to prove a lower bound for $u$ in a \textit{uniform neighbourhood of $\partial M$} -- this is one of the key new ideas in this paper. 
\end{enumerate}
We note that the assumption $\mu_\Gamma^+>1$ is used in both stages above. Once the lower bound in a uniform neighbourhood of $\partial M$ is established in the second step, the local interior gradient estimate from the first step and a trivial global upper bound in Proposition \ref{75} then allows one to propagate the lower bound to all of $M$ -- see the proof of Proposition \ref{151} for the details.

We now discuss the two steps above in more detail. Our local interior gradient estimate, which is also of independent interest, is as follows: 
\begin{thm}\label{40}
	Let $(M,g_0)$ be a smooth Riemannian manifold of dimension $n\geq 3$, possibly with non-empty boundary, and suppose $(f,\Gamma)$ satisfies \eqref{21'}--\eqref{24'} and \eqref{418}. Fix $\tau\in(0,1]$, a positive function $\psi\in C^\infty(M)$ and suppose that $u\in C^3(B_r)$ satisfies
	\begin{equation}\label{10}
	f^\tau(\lambda(-g_u^{-1}A_{g_u})) = \psi, \quad \lambda(-g_u^{-1}A_{g_u})\in\Gamma^\tau 
	\end{equation}
	in a geodesic ball $B_r$ contained in the interior of $M$. Then 
	\begin{equation}\label{69}
	|\nabla_{g_0} u|_{g_0}(x) \leq C(r^{-1} + e^{\sup_{B_r}u}) \quad\text{for }x\in B_{r/2}
	\end{equation}
	where $C$ is a constant depending on $n, f, \Gamma$, $\|g_0\|_{C^3(B_r)}$ and $\|\psi\|_{C^1(B_r)}$ but independent of $\tau$ and $\inf_{B_r}\psi$. 
\end{thm}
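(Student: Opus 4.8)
The strategy is the standard Bernstein-type argument adapted to the fully nonlinear conformally-invariant setting, but with care taken to ensure the constant does not depend on $\inf_{B_r}\psi$ or on a lower bound for $u$. I would work with the test function
\[
\Phi = \eta \, |\nabla_{g_0} u|_{g_0}^2 \, e^{2u},
\]
or more precisely its logarithm $\varphi = \ln(|\nabla_{g_0} u|_{g_0}^2) + 2u + \ln \eta$, where $\eta$ is a cutoff supported in $B_r$, equal to $1$ on $B_{r/2}$, with $|\nabla \eta|^2/\eta$ and $|\nabla^2\eta|$ controlled by $r^{-2}$. The factor $e^{2u}$ is natural here: since $g_u = e^{2u}g_0$ and the quantity $|\nabla_{g_0}u|^2 e^{2u}$ behaves like $|\nabla_{g_u}u|^2$ measured in the $g_u$-metric (up to conformal weight), this choice is what makes the estimate \eqref{69} scale-correctly and allows the dependence on $u$ to appear only through $e^{\sup_{B_r} u}$ rather than through $\sup u$ and $\inf u$ separately. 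I would compute at a point $x_0$ where $\Phi$ attains its interior maximum over $B_r$, assume $|\nabla_{g_0}u|(x_0)$ is large (otherwise there is nothing to prove), and choose normal coordinates for $g_0$ at $x_0$ with the Hessian of $u$ diagonalized, or else work directly with the linearized operator $L = f^\tau_{ij}(\lambda(-g_u^{-1}A_{g_u})) \, \partial_{ij}$ obtained by differentiating \eqref{10}.

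The key steps, in order, are: (i) differentiate the equation \eqref{10} once in a direction $e_k$ to get an expression for $L(\partial_k u)$ in terms of $\partial_k \psi$ and curvature/lower-order terms — here I use concavity of $f^\tau$ and the transformation law \eqref{118} which expresses $-g_u^{-1}A_{g_u}$ in terms of $\nabla^2_{g_0}u$, $|\nabla_{g_0}u|^2$, $du\otimes du$ and $A_{g_0}$; (ii) at the maximum point of $\varphi$, use $\nabla \varphi = 0$ to substitute $\nabla(|\nabla u|^2)/|\nabla u|^2 = -2\nabla u - \nabla\eta/\eta$, and use $L\varphi \le 0$ together with the concavity-driven good term $\sum_i f^\tau_{ii}(\partial_{ii}u)^2/|\nabla u|^2$ (a Cauchy–Schwarz/ellipticity lower bound on the second-order part of $L\ln|\nabla u|^2$); (iii) exploit the structure of the equation — in particular the fact that $-g_u^{-1}A_{g_u}\in\Gamma^\tau\subseteq\Gamma_1^{+,\tau}$ forces, via $f^\tau>0$, a one-sided bound relating $\Delta_{g_0}u$ (or the relevant trace) to $|\nabla_{g_0}u|^2$ up to controlled error, which is where the condition $\mu_\Gamma^+>1$ (equivalently the algebraic structure of $\Gamma^\tau$) enters to guarantee the sign of the crucial term; (iv) collect terms: the dangerous terms are cubic in $|\nabla_{g_0}u|$ coming from the $du\otimes du$ and $|\nabla u|^2 g_0$ pieces of \eqref{118} after differentiation, and the role of the $e^{2u}$ weight plus the $-2\nabla u$ substitution from step (ii) is to cancel the leading cubic term, leaving an inequality of the form $c\,|\nabla_{g_0}u|^2(x_0) \le C(r^{-2} + e^{2\sup_{B_r}u} + \text{lower order})$, where $c>0$ depends on the ellipticity constants of $f^\tau$ (uniform in $\tau$ by homogeneity and \eqref{25'}); rearranging and unwinding the cutoff yields \eqref{69}.

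The main obstacle I anticipate is step (iii)–(iv): controlling the terms that are cubic in the gradient and showing their net contribution has the right sign after using the critical-point relation, uniformly in $\tau \in (0,1]$ and without any lower bound on $\psi$ or on $u$. Two subtleties must be handled. First, the linearized operator $L$ is only non-uniformly elliptic (as the paper emphasizes after \eqref{118}), so one cannot divide by the smallest eigenvalue of $(f^\tau_{ij})$; instead one must argue using the trace $\sum_i f^\tau_{ii}$, which by concavity and $1$-homogeneity satisfies $\sum_i f^\tau_{ii}\lambda_i = f^\tau(\lambda) = \psi$ and $\sum_i f^\tau_{ii}\ge f^\tau(e)>0$ — so the ``good'' second-order term $\sum_i f^\tau_{ii}(\partial_{ii}u)^2$ is genuinely available, but its usefulness must be weighed against terms like $\sum_i f^\tau_{ii}$ times a constant, which are a priori unbounded; the resolution is to show these enter multiplied by $r^{-2}$ or $e^{2\sup u}$ after the substitutions, i.e. they are absorbed. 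Second, the appearance of $\inf\psi$ must be excluded: differentiating $f^\tau=\psi$ gives terms with $\psi$ and $\nabla\psi$ but never $\psi^{-1}$, provided one does not normalize the eigenvalues; this is why the statement can avoid dependence on $\inf_{B_r}\psi$, and I would make sure every manipulation respects that. I would model the argument on the interior gradient estimates for $\sigma_k$-equations (e.g. Guan–Wang, Chen, Li–Nguyen type arguments) but re-derive the constants to confirm $\tau$-independence via the homogeneity normalization \eqref{25'}.
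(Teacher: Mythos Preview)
Your proposal has a genuine gap: the mechanism you describe for handling the cubic gradient terms is not the one that works, and you misidentify where the hypothesis $\mu_\Gamma^+>1$ enters.

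First, on the role of $\mu_\Gamma^+>1$: the ``one-sided bound relating $\Delta_{g_0}u$ to $|\nabla_{g_0}u|^2$'' that you invoke in step (iii) comes from $\Gamma\subseteq\Gamma_1^+$, which holds for \emph{every} cone satisfying \eqref{21'}--\eqref{24'}, not just those with $\mu_\Gamma^+>1$. Since the gradient estimate is known to fail when $\mu_\Gamma^+=1$ (counterexamples exist), the hypothesis must enter in a sharper way than through a trace inequality. In the paper's proof, $\mu_\Gamma^+>1$ is used in exactly one place, and it is geometric rather than analytic: it is equivalent to $(1,\dots,1,-1)\in\Gamma$.

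Second, on the cubic terms: with the unweighted test function $H=\rho|\nabla u|^2$, the standard computation (as in Guan--Wang) yields at the maximum point an inequality of the form
\[
0 \ge -C\mathcal{F}_\tau(1+e^{2u})|\nabla u|^2 - C\rho\mathcal{F}_\tau\frac{|\nabla u|^4}{A_0} + \rho\sum_{i,l}F_\tau^{ii}\widetilde u_{il}^2,
\]
where the cubic terms have been absorbed into $|\nabla u|^4/A_0$ with $A_0$ a large parameter. The good Hessian-squared term $\sum F_\tau^{ii}\widetilde u_{il}^2$ is only nonnegative in general; it does \emph{not} automatically dominate $|\nabla u|^4\mathcal{F}_\tau$, and no choice of weight changes this. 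Your claim that the $e^{2u}$ weight plus the critical-point substitution ``cancels the leading cubic term'' to leave $c|\nabla u|^2 \le C(\dots)$ with $c>0$ depending on ellipticity constants cannot be right as stated, because the operator is non-uniformly elliptic and no such uniform $c$ is available.

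What the paper actually does is run a dichotomy. Either $\sum F_\tau^{ii}\widetilde u_{il}^2 \ge \epsilon|\nabla u|^4\mathcal{F}_\tau$ for a small fixed $\epsilon$, in which case the inequality above closes directly; or this fails, and then a careful analysis (using the critical-point relation and a cone property due to Yuan, namely that $\partial f/\partial\lambda_i \ge \theta\,\mathcal{F}_\tau$ for the two smallest eigenvalue directions) forces the ordered eigenvalues of $-g_0^{-1}A_{g_u}$ at $x_0$ to be a perturbation of $(1,\dots,1,-1)\tfrac{|\nabla u|^2}{2}$. Now $\mu_\Gamma^+>1$ is exactly the statement that $(1,\dots,1,-1)\in\Gamma$, so the equation $f^\tau(\lambda)=\psi e^{2u}$ together with homogeneity gives $|\nabla u|^2 \le Ce^{2u}$ directly. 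This eigenvalue-structure argument is the missing idea in your plan.
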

We note that Theorem \ref{40} was previously obtained for $(f,\Gamma)=(\sigma_k^{1/k}, \Gamma_k^+)$ when $k<\frac{n}{2}$ and $\tau=1$ in the thesis of Khomrutai \cite{Kho09}\footnote{We would like to thank Baozhi Chu, YanYan Li and Zongyuan Li for bringing \cite{Kho09} to our attention.}. Roughly speaking, one important observation in \cite{Kho09} is as follows: if $\rho|\nabla_{g_0} u|_{g_0}^2$ attains its maximum at $x_0$ (here $\rho$ is a cutoff function satisfying standard assumptions), then in a `worst case scenario' (i.e.~in a situation where the gradient estimate cannot be obtained somewhat directly), the ordered eigenvalues $\lambda_1(x_0) \geq \dots \geq \lambda_n(x_0)$ of $(-g_0^{-1}A_{g_u})(x_0)$ are greater than or equal to a perturbation of $(1,\dots,1,-1)\frac{|\nabla u|^2}{2}(x_0)$. But when $k<\frac{n}{2}$, the vector $(1,\dots,1,-1)$ belongs to $\Gamma_k^+$, and so by the equation \eqref{10} and homogeneity of $\sigma_k^{1/k}$, the gradient estimate follows. In our proof of Theorem \ref{40}, we show that this phenomenon persists for general cones satisfying $\mu_\Gamma^+>1$. In order to circumvent certain arguments of Khomrutai that rely on algebraic properties of the $\sigma_k$ operators, we appeal to some general cone properties recently observed by Yuan in \cite{Yuan22}.

\begin{rmk}
	For gradient estimates on solutions to equations of the form \eqref{10} which depend on two-sided $C^0$ bounds, see for instance \cite{GV03b, Guan08}. For gradient estimates for the related positive cone equation, see e.g.~\cite{Che05, GW03b, JLL07, LL03, Li09, Wan06, Via02}. 
\end{rmk}

\begin{rmk}\label{179}
	We have been informed that in an upcoming work of Baozhi Chu, YanYan Li and Zongyuan Li \cite{CLL23}, a Liouville-type theorem for a fully nonlinear, degenerate elliptic Yamabe-type equation on negative cones is proved for all $\mu_\Gamma^+ \neq 1$. As an application of this Liouville-type theorem and the method in \cite{Li09} (which dealt with local gradient estimates for equations on positive cones), the authors obtain local interior gradient estimates for solutions to \eqref{10} depending	only on one-sided $C^0$ bounds for all $\mu_\Gamma^+ \neq 1$, without assuming concavity of $f$. Counterexamples to both results are also given when $\mu_\Gamma^+ = 1$. This proof is entirely different from our proof of Theorem \ref{40}.
\end{rmk}

We now turn to the second step mentioned above, namely the lower bound in a neighbourhood of $\partial M$. This is achieved through constructing suitable comparison functions on small annuli; the main step here is to prove the following proposition (see Proposition \ref{54} for a more precise version):
	\begin{prop}\label{54'}
	Suppose $(f,\Gamma)$ satisfies \eqref{21'}--\eqref{24'} and \eqref{418}, let $g_0$ be a Riemannian metric defined on a neighbourhood $\Omega$ of the origin in $\mathbb{R}^n$, and let $m\in\mathbb{R}$. Then there exist constants $S>1$ and $0<R<1$ depending on $g_0, f, \Gamma$ and $m$ such that whenever $1<\frac{r_+}{r_-}<S$ and $r_+<R$, there exists a solution to
	\begin{align*}
	\begin{cases}
	f(\lambda(-g_w^{-1}A_{g_w})) \geq 1, \quad \lambda(-g_w^{-1}A_{g_w})\in\Gamma &  \text{on }A_{r_-, r_+}\defeq \{x : r_-<\operatorname{d}_{g_0}(x,0)<r_+\} \\
	w(x) = m & \text{for }x\in\mathbb{S}_{r_-} \\
	w(x)\rightarrow -\infty & \text{as }\operatorname{d}_{g_0}(x,\mathbb{S}_{r_+})\rightarrow 0.
	\end{cases}
	\end{align*}
\end{prop}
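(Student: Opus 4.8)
The plan is to construct $w$ explicitly as a radial function of $s \defeq \operatorname{d}_{g_0}(x,0)$ on the annulus $A_{r_-,r_+}$, modeled on the known exact solutions of the fully nonlinear Loewner--Nirenberg problem on Euclidean annuli, and then absorb the error terms coming from the fact that $g_0$ is only \emph{close to} Euclidean on a small ball. Concretely, I would first record the radial calculus: for a conformal factor $e^{2w}$ with $w = \phi(s)$, the conformal transformation law \eqref{118} expresses $-g_w^{-1}A_{g_w}$ in geodesic polar coordinates as a diagonal-plus-lower-order operator whose eigenvalues are, to leading order, one ``radial'' eigenvalue $\mu_{\mathrm{rad}} = -\phi'' + (\phi')^2 + (\text{lower order})$ and an $(n-1)$-fold ``spherical'' eigenvalue $\mu_{\mathrm{sph}} = -\frac{\phi'}{s} - \tfrac12(\phi')^2 + (\text{lower order})$, where the lower-order terms are controlled by $\|g_0\|_{C^2}$ on $\Omega$ and by $s$. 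The target is to choose $\phi$ so that $\lambda = (\mu_{\mathrm{rad}}, \mu_{\mathrm{sph}}, \dots, \mu_{\mathrm{sph}})$ lies in $\Gamma$ with $f(\lambda) \ge 1$, with $\phi(r_-) = m$ and $\phi(s) \to -\infty$ as $s \uparrow r_+$.

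Next I would identify the correct ansatz. The blow-up as $s \uparrow r_+$ forces $\phi(s) \approx -\ln(r_+ - s)$ near the outer sphere, which makes $\mu_{\mathrm{rad}} \approx (\phi')^2$ dominate and $\mu_{\mathrm{sph}} \approx -\tfrac12(\phi')^2$, i.e.\ the eigenvalue vector points in the direction of (a positive multiple of) $(1,-\tfrac12,\dots,-\tfrac12)$, up to rescaling $(2,-1,\dots,-1)$. The condition $\mu_\Gamma^+ > 1$ is exactly what guarantees this vector lies in $\Gamma$: by definition $(-\mu_\Gamma^+,1,\dots,1)\in\partial\Gamma$ and convexity/symmetry of $\Gamma$ give that $(\mu_\Gamma^+, -1, \dots, -1)$ — hence also any vector ``more positive in the first slot, cone-interior'' like $(2,-1,\dots,-1)$ when $\mu_\Gamma^+>1$ — lies in $\Gamma$; here I'd invoke the cone-geometry facts from \cite{Yuan22} / \cite{LN14b} already used in the excerpt for Theorem \ref{40}, and the same mechanism as in Khomrutai's ``worst-case'' analysis. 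So the leading-order model solves the equation with room to spare, and I can pick $S$ close enough to $1$ and $R$ small enough that the leading behavior controls the error terms uniformly on $A_{r_-,r_+}$. A clean way to organize this: set $\phi(s) = -\ln(r_+-s) + \eta(s)$ for a bounded correction $\eta$ to be chosen, or, even more simply, take $\phi$ to be an affine-in-$\ln$ interpolation $\phi(s) = a - b\ln\!\big((s-c)/(r_+-s)\big)$-type profile; compute $f(\lambda(\phi))$ along it; check it exceeds $1$ once $R$ is small (so the metric error is small) and $r_+/r_-$ is close to $1$ (so there's enough ``room'' in $s$ to run the profile from value $m$ to $-\infty$ without the spherical eigenvalues leaving $\Gamma$); then verify $\lambda \in \Gamma$ throughout by a convexity/continuity argument anchored at the outer end.

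I expect the main obstacle to be the \emph{inner-boundary matching combined with staying in the cone}: near $s = r_-$ the derivative $\phi'$ need not be large, so the eigenvalue vector is not close to the favorable direction $(2,-1,\dots,-1)$ and the metric error terms are not negligible relative to it; one must ensure both $f(\lambda)\ge 1$ \emph{and} $\lambda\in\Gamma$ persist there while still hitting the prescribed value $\phi(r_-)=m$. This is precisely why the statement only asserts existence for $r_+/r_-$ in a small range $(1,S)$ and $r_+ < R$: restricting to a thin annulus lets one keep $\phi'$ large (of order $1/(r_+-s)\gtrsim 1/(r_+-r_-)$) on \emph{all} of $A_{r_-,r_+}$, so the favorable direction dominates everywhere, at the cost of possibly a large constant in $\phi$ — which is harmless since only the \emph{value} $m$ at $\mathbb{S}_{r_-}$ is prescribed, not the derivative. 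Concretely I would choose the profile $\phi$ so that $\phi'(s) \ge c_0/(r_+ - s)$ with $c_0$ large, arrange $\phi(r_-) = m$ by translating $\phi$ by a constant (free to do, again because only the value is fixed), and then a single computation shows $\lambda(\phi)$ is a uniformly interior perturbation of the ray through $(2,-1,\dots,-1)$, on which $f \ge 1$ after adjusting $c_0$ and shrinking $R$. The remaining steps — verifying $\phi$ is smooth, $w = \phi \circ \operatorname{d}_{g_0}(\cdot,0)$ is well-defined and smooth on the annulus (geodesic distance from a point is smooth off the cut locus, which is avoided for $R$ small), and the supersolution inequality is genuine and not merely formal — are routine.
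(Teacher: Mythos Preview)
Your overall strategy---build a radial barrier $w=\phi(s)$ modeled on the Euclidean annular solutions of Chang--Han--Yang, then absorb the metric errors by working on a thin annulus with $r_+$ small---is exactly the paper's approach. But the computation at the heart of your argument is wrong, and the error is not cosmetic.

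First, the boundary condition is $w\to -\infty$ as $s\uparrow r_+$, so the leading behaviour must be $\phi(s)\approx +c\ln(r_+-s)$ with $c>0$, not $-\ln(r_+-s)$ (which blows up to $+\infty$). With the correct sign one has $\phi'=-c/(r_+-s)<0$ and $\phi''=-c/(r_+-s)^2<0$. Plugging into \eqref{118}, the eigenvalues of $-g_0^{-1}A_{g_u}$ are, to leading order,
\[
\text{radial: }\ \phi''-\tfrac12(\phi')^2=-\tfrac{c(c+2)}{2(r_+-s)^2},\qquad
\text{spherical: }\ \tfrac{\phi'}{s}+\tfrac12(\phi')^2\approx \tfrac{c^2}{2(r_+-s)^2}.
\]
So the eigenvalue vector points along $\bigl(-\tfrac{c+2}{c},1,\dots,1\bigr)$: the radial eigenvalue is \emph{negative} and the $n-1$ spherical eigenvalues are \emph{positive}. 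Your claimed direction $(2,-1,\dots,-1)$ has this backwards; worse, $(2,-1,\dots,-1)$ is not even in $\Gamma_1^+$ for $n\ge 3$ (its trace is $3-n\le 0$), so it cannot lie in any $\Gamma\subseteq\Gamma_1^+$, and your cone argument (``$(-\mu_\Gamma^+,1,\dots,1)\in\partial\Gamma$ implies $(\mu_\Gamma^+,-1,\dots,-1)\in\Gamma$'') is false---these are negatives of each other.

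Second, and this is the actual content of the construction: the exponent $c$ cannot be taken arbitrarily. The direction $\bigl(-\tfrac{c+2}{c},1,\dots,1\bigr)$ lies in $\Gamma$ precisely when $\tfrac{c+2}{c}<\mu_\Gamma^+$, i.e.\ when $c>\beta\defeq\tfrac{2}{\mu_\Gamma^+-1}$. The paper therefore takes $\phi(s)=(\beta+\ep)\ln\!\bigl(\tfrac{r_+-s}{r_+-r_-}\bigr)+m$ and proves two quantitative claims: the scalar prefactor is bounded below by a positive multiple of $(r_+-r_-)^{-2}e^{-2m}$ (this is where the thin-annulus condition $\tfrac{r_+}{r_-}<1+\tfrac{\ep}{2(\beta+2)}$ enters), and the direction satisfies $\bigl(-\tfrac{c+2}{c}+\text{error},1,\dots,1\bigr)\ge(-\mu_\Gamma^++c_2\ep,1,\dots,1)$ uniformly on the annulus once $r_+<R(\ep)$. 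Your ``take $c_0$ large'' heuristic would, with the correct signs, give direction near $(-1,1,\dots,1)$, which \emph{is} in $\Gamma$ when $\mu_\Gamma^+>1$; so a salvage is possible, but your proposal as written identifies neither the correct direction nor the mechanism by which $\mu_\Gamma^+>1$ enters.
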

Our construction of $w$ in Proposition \ref{54'} is modelled on the radial solutions of Chang, Han \& Yang \cite{CHY05} to the $\sigma_k$-Yamabe equation on annular domains in $\mathbb{R}^n$ when $k<\frac{n}{2}$. To apply Proposition \ref{54'} to complete the second step, we attach a collar neighbourhood $N$ to $\partial M$, and cover a neighbourhood of $\partial M$ in $M$ by sufficiently small annuli whose centres lie in $N$ and whose inner boundaries touch $\partial M$. On each of these annuli, the solutions constructed in Proposition \ref{54'} then serve as the desired lower bound by the comparison principle. We refer the reader to the proof of Proposition \ref{41} for the details. 

\begin{rmk}\label{182}
	The assumption $\mu_\Gamma^+>1$ plays an important role in our proof of Proposition \ref{54'}, and in fact a similar construction is not possible when $\mu_\Gamma^+\leq 1$. More precisely, given a smooth metric $g_0$ defined on an annulus $A_{r,R}$, and given a cone $\Gamma$ satisfying \eqref{21'}, \eqref{22'} and $\mu_\Gamma^+\leq 1$, there is no smooth metric $g_w = e^{2w}g_0$ satisfying $\lambda(-g_w^{-1}A_{g_w})\in\Gamma$ on $A_{r,R}$ and for which $w\rightarrow-\infty$ at either boundary component of $A_{r,R}$. The proof of this non-existence result uses arguments different in nature to those considered in this paper, and will appear elsewhere.
\end{rmk}

	For the remainder of the introduction, we discuss in more detail how our results and methods compare to previous work on fully nonlinear problems of Loewner-Nirenberg type. As mentioned before, when $M\backslash \partial M$ is a Euclidean domain, the existence of a Lipschitz viscosity solution to \eqref{113}, as well as uniqueness of this solution among continuous viscosity solutions, was established in \cite{GLN18}. Moreover, counterexamples to $C^1$ regularity were given in \cite{LN20b, LNX22}. The proof in \cite{GLN18} uses Perron's method, which in turn uses canonical solutions on interior/exterior balls and a comparison principle on Euclidean domains established in \cite{LNW18}. Since one cannot use exterior balls in the Riemannian setting, and since it is not currently known whether the comparison principle in \cite{LNW18} extends to the Riemannian setting, a different approach to that in \cite{GLN18} is required to prove Theorem \ref{A'}.
	
	On the other hand, for $(f,\Gamma) = (\sigma_k^{1/k},\Gamma_k^+)$ ($2\leq k \leq n$), Gursky, Streets \& Warren proved in \cite{GSW11} the existence of a unique smooth solution to \eqref{113} with the Ricci tensor in place of the Schouten tensor (see Remark \ref{168} below for the relation between this result and Theorem \ref{A'}, and see also the work of Wang \cite{W21} and Li \cite{L22} for some further related results). As in the present paper, the solution of Gursky, Streets \& Warren is constructed as a limit of solutions with finite boundary data, and these solutions are in turn obtained using the continuity method. Their method for obtaining an \textit{a priori} lower bound on solutions is different to ours, and is instead based on the explicit construction of a global subsolution. Roughly speaking, the subsolution construction in \cite{GSW11} uses the fact that, in the analogous formula to \eqref{118} for the Ricci tensor, the gradient terms are collectively nonnegative definite and so can be neglected in certain computations. In our case, the gradient terms do not have an overall sign, thus leading to our new approach for the lower bound discussed above.

	\begin{rmk}\label{168}
		Since $\mu_{\Gamma_k^+}^+ = \frac{n-k}{k}$, it is easy to see that $\mu_{(\Gamma_k^+)^\tau}^+ = \frac{n-k}{k} + (n-1)(1-\tau)$. Thus $\mu_{(\Gamma_k^+)^\tau}^+ >1$ if and only if $\tau < a_{n,k} \defeq \frac{n-k+k(n-2)}{k(n-1)}$. On the other hand, for $\tau=\frac{n-2}{n-1}$ we have $(\sigma_k^{1/k})^\tau(\lambda(-g_u^{-1}A_{g_u})) = \frac{1}{n-1}\sigma_k^{1/k}(\lambda(-g_u^{-1}\operatorname{Ric}_{g_u}))$. Since $\frac{n-2}{n-1}<a_{n,k}$ if and only if $k<n$, we therefore see that Theorem \ref{A'} recovers the result of \cite{GSW11} for $k<n$. 
	\end{rmk}

The plan of the paper is as follows. In Section \ref{14} we prove the local interior gradient estimate stated in Theorem \ref{40}. In Section \ref{15} we consider the Dirichlet boundary value problem \eqref{12}, proving Theorem \ref{55}. Finally, in Section \ref{135} we turn to the fully nonlinear Loewner-Nirenberg problem \eqref{-113}, proving Theorem \ref{A'} (and hence Theorem \ref{A}). \medskip
	
\noindent \textbf{Notation:} Throughout the rest of the paper, if $X$ is a $(1,1)$-tensor satisfying $\lambda(X)\in\Gamma$ then we frequently denote $f(X)\defeq f(\lambda(X))$. \medskip 

\noindent \textbf{Acknowledgements:} The authors would like to thank Prof.~YanYan Li for stimulating discussions and his constant support.

\section{Proof of Theorem \ref{40}: the local interior gradient estimate}\label{14}

In this section we prove the local interior gradient estimate stated in Theorem \ref{40}. Throughout the section, unless otherwise stated all derivatives and norms are taken with respect to $g_0$. Moreover, $C$ will denote a constant that may change from line to line and depends only on $n, f, \Gamma, \|g_0\|_{C^3(B_r)}$ and $\|\psi\|_{C^1(B_r)}$.

\subsection{Set-up and main ideas of the proof}\label{302}
	
Our set-up for the proof of Theorem \ref{40} is similar to that in the related works \cite{Che05, GW03b, JLL07, LL03, Kho09, Li09, Wan06} on local gradient estimates. Throughout this section we denote $S=A_{g_0}$ and

\begin{equation*}
	W = \nabla^2 u  + \frac{1}{2}|\nabla u|^2 g_0 - du\otimes du - S.
\end{equation*}
By a standard argument it suffices to consider the case $r=1$ in the proof of Theorem \ref{40}. Suppose $\rho\in C_c^\infty(B_1)$ is a cutoff function in $B_1$ with $\rho = 1$ on $B_{1/2}$, $|\nabla \rho| \leq C \rho^{1/2}$ and $|\nabla^2 \rho|\leq C$. Set $H=\rho|\nabla u|^2$ and suppose $H$ attains a maximum at $x_0$. We may assume that $|\nabla u|\geq 1$ at $x_0$, otherwise we are done. Choosing suitable normal coordinates centred at $x_0$, we may also assume $W = (w_{ij})$ is diagonal at $x_0$ with $w_{11} \geq \dots \geq w_{nn}$, and hence at $x_0$ we have
	\begin{align}\label{21}
	\begin{cases}w_{ii} = u_{ii}  - u_i^2 + \frac{1}{2}|\nabla u|^2 - S_{ii} & \text{for all }1\leq i \leq n, \\
	u_{ij}  = u_i u_j + S_{ij} &\text{for }i\not=j. 
	\end{cases} 
	\end{align}
	Using the fact that $H_i(x_0)=0$ for each $i$, we obtain at $x_0$ 
	\begin{equation}\label{120}
	\sum_{l=1}^n u_{il}u_l = -\frac{\rho_i}{2\rho}|\nabla u|^2
	\end{equation}
	and hence 
	\begin{equation}\label{4}
	\bigg|\sum_{l=1}^n u_{il}u_l \bigg| \leq C\rho^{-1/2}|\nabla u|^2.
	\end{equation}
	For $A_0$ a large number to be fixed later, we may assume at $x_0$ that
	\begin{equation}\label{5}
	\rho^{-1/2} \leq C\frac{|\nabla u|}{A_0}\quad\text{and}\quad|S|\leq \frac{|\nabla u|^2}{A_0},
	\end{equation}
	otherwise we are done. Note that by combining \eqref{4} with the first estimate in \eqref{5}, we have
	\begin{align}\label{26}
	\bigg|\sum_{l=1}^n u_{il}u_l \bigg| \leq C\frac{|\nabla u|^3}{A_0}. 
	\end{align}

	Denote by $F_\tau^{ij}$ the coefficients of the linearised operator at $ (g_0^{-1}W)(x_0)$, that is
	\begin{align*}
	F_\tau^{ij}= \frac{\partial f^\tau}{\partial A_{ij}}\bigg|_{A=(g_0^{-1}W)(x_0)}.
	\end{align*}
	Then $(F^{ij}_\tau)$ is a positive definite, diagonal matrix. Also denote
	\begin{align*}
	\mathcal{F}_\tau = \sum_{i=1}^n F^{ii}_\tau \quad \text{and} \quad \widetilde{u}_{ij} \defeq u_{ij} - S_{ij}. 
	\end{align*}
	By homogeneity and concavity of $f$, it is easy to see that $\mathcal{F}_\tau \geq \frac{1}{C}>0$: indeed, denoting $\lambda = \lambda(g_0^{-1}W)(x_0)$, we have
	\begin{align}\label{183}
	\mathcal{F}_\tau = \sum_{i=1}^n \frac{\partial f^\tau}{\partial \lambda_i}(\lambda) = f^\tau(\lambda) + \sum_{i=1}^n \frac{\partial f^\tau}{\partial \lambda_i}(\lambda)(1-\lambda_i) \geq f^\tau(1,\dots,1). 
	\end{align}
	
	With our set-up and notation established, we now briefly discuss the main ideas in the proof of Theorem \ref{40}. The first step is to obtain the following lemma: 
	
	\begin{lem}\label{34}
		Under the same hypotheses as Theorem \ref{40} but without the restriction $\mu_\Gamma^+>1$, there exists a constant $C$ such that 
		\begin{align}\label{1''}
		0 \geq- C\mathcal{F}_\tau(1+e^{2u})|\nabla u|^2 - C\rho\mathcal{F}_\tau\frac{|\nabla u|^4}{A_0}  + \rho\sum_{i,l}F_\tau^{ii}\widetilde{u}_{il}^2 \quad \text{at }x_0. 
		\end{align}
	\end{lem}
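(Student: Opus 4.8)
The plan is to derive \eqref{1''} by differentiating the equation \eqref{10} twice and combining the resulting identities with the first-order information at the maximum point $x_0$ of $H = \rho|\nabla u|^2$. Concretely, I would apply the linearised operator $F_\tau^{ij}$ to the quantity $H$, using that $H_i(x_0) = 0$ and that the Hessian of $H$ at $x_0$ is nonpositive, i.e.\ $\sum_{ij} F_\tau^{ij} H_{ij}(x_0) \leq 0$. Expanding $H_{ij} = \rho_{ij}|\nabla u|^2 + 2\rho_i (|\nabla u|^2)_j \text{-terms} + \rho (|\nabla u|^2)_{ij}$ and using $(|\nabla u|^2)_{ij} = 2\sum_l u_{li}u_{lj} + 2\sum_l u_l u_{lij}$, one isolates a good term $2\rho \sum_{ij,l} F_\tau^{ij} u_{li}u_{lj}$, a third-derivative term $2\rho\sum_{ij,l}F_\tau^{ij}u_l u_{lij}$, and error terms controlled by the cutoff bounds $|\nabla\rho|\leq C\rho^{1/2}$, $|\nabla^2\rho|\leq C$ together with \eqref{5} and \eqref{26}.

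The key algebraic manipulation is to handle the third-derivative term. Differentiating the equation $f^\tau(\lambda(-g_u^{-1}A_{g_u})) = \psi$ once in the direction $\partial_l$ and contracting with $u_l$ gives $\sum_{ij}F_\tau^{ij}(\nabla_l W)_{ij} u_l$ (plus lower-order terms) equals $\sum_l \psi_l u_l$, which is $O(|\nabla u|)$. Since $W_{ij} = u_{ij} + \tfrac12|\nabla u|^2 g_{ij} - u_i u_j - S_{ij}$, commuting covariant derivatives to write $(\nabla_l W)_{ij}$ in terms of $u_{ijl}$ introduces curvature terms (controlled by $\|g_0\|_{C^3}$, hence of order $|\nabla u|^2$ or lower after using \eqref{5}) and quadratic gradient terms like $u_l u_{jl}$, $u_l u_{il}$, $(\tfrac12|\nabla u|^2)_l g_{ij}$; the last of these, contracted against $u_l$ and $F_\tau^{ij}$, produces $\mathcal{F}_\tau \cdot (\text{something involving } \sum u_l u_{ll})$, which by \eqref{4}/\eqref{26} is absorbed into the $C\rho\mathcal{F}_\tau |\nabla u|^4/A_0$ term. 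Careful bookkeeping shows the third-derivative term is, up to these controllable errors, bounded below by $-C\mathcal{F}_\tau|\nabla u|^2$ after also using $|S| \leq |\nabla u|^2/A_0$ and the relation \eqref{21} between $u_{ij}$ and $w_{ij}$ off-diagonal. The factor $1+e^{2u}$ appears because $\psi$ and the Schouten tensor $S = A_{g_0}$ interact with the conformal factor $e^{2u}$ through the equation — more precisely $f^\tau(-g_u^{-1}A_{g_u}) = e^{-2u}f^\tau(g_0^{-1}W)$, so the equation reads $f^\tau(g_0^{-1}W) = e^{2u}\psi$, and differentiating brings down a factor $e^{2u}$ times $|\nabla u|$-type terms.

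For the good term, I would rewrite $2\rho\sum_{ij,l}F_\tau^{ij}u_{li}u_{lj} = 2\rho\sum_{il}F_\tau^{ii}u_{li}^2$ (using that $F_\tau^{ij}$ is diagonal in the chosen coordinates) and then replace $u_{li}$ by $\widetilde u_{li} = u_{li} - S_{li}$, which costs a term $\rho\sum F_\tau^{ii}(\text{cross terms with } S)$ of order $\rho\mathcal{F}_\tau|S||\nabla u|^2 \lesssim \rho\mathcal{F}_\tau|\nabla u|^4/A_0$ by \eqref{5}, plus $\rho\mathcal{F}_\tau|S|^2 \lesssim \rho\mathcal{F}_\tau|\nabla u|^4/A_0^2$. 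This produces exactly the $+\rho\sum_{i,l}F_\tau^{ii}\widetilde u_{il}^2$ term in \eqref{1''} (keeping only a positive fraction of it, which is fine since it appears with a $+$ sign). Collecting everything: the second-derivative-of-cutoff errors give $-C\mathcal{F}_\tau|\nabla u|^2$; the third-derivative term gives $-C\mathcal{F}_\tau(1+e^{2u})|\nabla u|^2$; the $S$-replacement and the $\sum u_l u_{ll}$ contributions give $-C\rho\mathcal{F}_\tau|\nabla u|^4/A_0$; and one keeps the favorable $\rho\sum F_\tau^{ii}\widetilde u_{il}^2$. That is precisely \eqref{1''}.

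The main obstacle I anticipate is the careful tracking of signs and the curvature commutator terms in the third-derivative estimate: when one commutes $\nabla_l \nabla_i \nabla_j u$ past $\nabla_i \nabla_j \nabla_l u$ using the Ricci identity on $(M,g_0)$, the error is $\mathrm{Rm} * \nabla u$, which is $O(|\nabla u|)$ pointwise and hence fine, but one also gets terms where $\nabla^3 u$ is contracted against $F_\tau^{ij}$ in a way that is \emph{not} obviously of a good sign — these must be matched against the differentiated equation rather than estimated crudely. The second subtlety is making sure every error term genuinely carries either a factor $(1+e^{2u})$, a factor $1/A_0$ (to be absorbed later once $A_0$ is chosen large), or at worst $\mathcal{F}_\tau|\nabla u|^2$ without higher powers of $|\nabla u|$; this is where the hypotheses \eqref{4}, \eqref{5}, \eqref{26} and the estimate $\mathcal{F}_\tau \geq 1/C$ from \eqref{183} all get used. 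Note this lemma deliberately does \emph{not} use $\mu_\Gamma^+ > 1$ — that hypothesis enters only in the next stage, where one analyzes the "worst-case" eigenvalue configuration — so here the argument is purely the standard maximum-principle-plus-differentiated-equation computation, just executed for a general concave $(f,\Gamma)$ and with the conformal factor tracked explicitly.
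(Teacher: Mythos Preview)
Your proposal is correct and follows essentially the same approach as the paper's proof: apply $\sum_i F_\tau^{ii}$ to $H_{ii}$ at the maximum point, commute derivatives to rewrite the third-order term $\sum_{i,l}F_\tau^{ii}u_{lii}u_l$ via the differentiated equation $\sum_i F_\tau^{ii}(w_{ii})_l = (\psi e^{2u})_l$ (using $1$-homogeneity), control the resulting gradient cross-terms by \eqref{26} and \eqref{5}, and finally pass from $u_{il}^2$ to $\widetilde u_{il}^2$ by Cauchy--Schwarz absorbing half of the good term. The only imprecision is your stated bound on the $u_{li}S_{li}$ cross term as $O(|S||\nabla u|^2)$ --- since $|u_{li}|$ is not a priori controlled by $|\nabla u|^2$, the correct move (which you allude to with ``keeping only a positive fraction'') is Young's inequality $u_{il}^2 \geq \tfrac{1}{2}\widetilde u_{il}^2 - S_{il}^2$, exactly as the paper does.
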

\noindent The proof of Lemma \ref{34} is by now standard and will be given in Section \ref{132}.
	
	 Now, in the case that the positive term on the RHS of \eqref{1''} dominates $|\nabla u|^4\mathcal{F}_\tau$, in the sense that 
	\begin{align}\label{300}
	\sum_{i,l}F^{ii}_\tau \widetilde{u}_{il}^2 \geq \ep|\nabla u|^4\mathcal{F}_\tau \quad \text{at }x_0
	\end{align}
	for a suitably chosen small constant $\ep>0$, then the desired gradient estimate is routine (the details will be given later). On the other hand, if \eqref{300} fails for our suitably chosen small constant $\ep>0$, we will see that the ordered eigenvalues $w_{11} \geq \dots \geq w_{nn}$ of $W$ at $x_0$ are greater than or equal to a perturbation of $(1,\dots,1,-1)\frac{|\nabla u|^2}{2}$. As mentioned in the introduction, this phenomenon was previously observed in the case $(f,\Gamma) = (\sigma_k^{1/k},\Gamma_k^+)$ when $k<\frac{n}{2}$ in the thesis of Khomrutai \cite{Kho09}. Using the fact that $(1,\dots,1,-1)\in\Gamma$ (this is the only place in the proof of Theorem \ref{40} where the assumption $\mu_\Gamma^+>1$ is used), the gradient estimate again follows. The details will be given in Section \ref{301}.

	\subsection{Proof of Lemma \ref{34}}\label{132}
	
In this section we give the proof of Lemma \ref{34}:

\begin{proof}
	We follow closely the proof of Guan \& Wang \cite{GW03b}. In what follows, all computations are implicitly carried out at $x_0$. First observe that by \eqref{120},
	\begin{align*}
	H_{ij} = \bigg(\rho_{ij} - \frac{2\rho_i\rho_j}{\rho}\bigg)|\nabla u|^2 + 2\rho\sum_{l=1}^n u_{lij}u_l + 2\rho\sum_{l=1}^n u_{il}u_{jl},
	\end{align*}
	and hence by positivity of $(F^{ij}_\tau)$ and non-positivity of $(H_{ij})$, 
	\begin{align}\label{9}
	0 \geq \sum_{i=1}^nF_\tau^{ii}H_{ii} & = \sum_{i=1}^nF_\tau^{ii}\bigg[\bigg(\rho_{ii} - \frac{2\rho_i^2}{\rho}\bigg)|\nabla u|^2 + 2\rho\sum_{l=1}^n u_{lii}u_l + 2\rho\sum_{l=1}^n u_{il}^2\bigg] \nonumber \\
	& = -C |\nabla u|^2 \mathcal{F}_\tau + 2\rho\sum_{i,l} F_\tau^{ii}u_{lii}u_l +2\rho\sum_{i,l}F_\tau^{ii} u_{il}^2.
	\end{align}
	Now, commuting derivatives yields
	\begin{align}\label{8}
	\sum_{i,l} F_\tau^{ii}u_{lii}u_l & \geq \sum_{i,l} F_\tau^{ii}u_{iil} u_l - C|\nabla u|^2 \mathcal{F}_\tau \nonumber \\
	& = \sum_{i,l} F_\tau^{ii}\bigg[(w_{ii})_l - \bigg(\frac{1}{2}|\nabla u|^2 - u_i^2\bigg)_l + (S_{ii})_l \bigg]u_l   - C|\nabla u|^2\mathcal{F}_\tau \nonumber \\
	& = \sum_{l=1}^n (\psi e^{2u})_l u_l - \mathcal{F}_\tau\sum_{k,l} u_{kl}u_k u_l + 2\sum_{i,l} F_\tau^{ii} u_{il}u_i u_l + \sum_{i,l} F_\tau^{ii}(S_{ii})_l u_l - C|\nabla u|^2 \mathcal{F}_\tau,
	\end{align} 
	where to reach the last line we have used the fact that $f^\tau$ is homogeneous of degree one to assert that $\sum_i F^{ii}_\tau (w_{ii})_l = (f^\tau(g_0^{-1}W))_l= (\psi e^{2u})_l$. Also, since $|\nabla u|\geq 1$, we can bound the penultimate term in \eqref{8} from below by $-C|\nabla u|^2\mathcal{F}_\tau$, and also observe that
	\begin{align}\label{121}
	\sum_{l=1}^n (\psi e^{2u})_l u_l &  = \sum_{l=1}^n e^{2u}\psi_l u_l + 2 e^{2u}\psi|\nabla u|^2  \geq -Ce^{2u}|\nabla u|^2.
	\end{align}
	Also, by \eqref{26} we have 
	\begin{equation}\label{6}
	- \mathcal{F}_\tau \sum_{k,l} u_{kl}u_k u_l \geq -C\frac{|\nabla u|^4}{A_0}\mathcal{F}_\tau,
	\end{equation}
	and likewise 
	\begin{equation}\label{7}
	2\sum_{i,l} F_\tau^{ii} u_{il}u_i u_l = 2\sum_i \bigg(F^{ii}_\tau u_i\sum_l u_{il}u_l\bigg)  \geq - 2\sum_i \bigg(\big|F^{ii}_\tau u_i\big|\Big|\sum_l u_{il}u_l\Big|\bigg)\geq  -C \frac{|\nabla u|^4}{A_0}\mathcal{F}_\tau.  
	\end{equation}
	Substituting \eqref{121}--\eqref{7} back into \eqref{8} and recalling $\mathcal{F}_\tau \geq \frac{1}{C}$, we get
	\begin{align*}
	\sum_{i,l} F_\tau^{ii}u_{lii}u_l & \geq -C(1+e^{2u})|\nabla u|^2 \mathcal{F}_\tau - C\mathcal{F}_\tau\frac{|\nabla u|^4}{A_0},
	\end{align*}
	and substituting this back into \eqref{9} we see
	\begin{align}\label{1}
	0 & \geq - C\mathcal{F}_\tau(1+e^{2u})|\nabla u|^2 - C\rho\mathcal{F}_\tau\frac{|\nabla u|^4}{A_0} + 2\rho \sum_{i,l}F_\tau^{ii}u_{il}^2.
	\end{align} 
	The desired estimate \eqref{1''} then follows from \eqref{1} and the following inequality, which is a consequence of the Cauchy-Schwarz inequality and the second inequality in \eqref{5}:
	\begin{align*}
	\sum_{i,l} F_\tau^{ii}u_{il}^2  \geq  \frac{1}{2}\sum_{i,l} F_\tau^{ii} \widetilde{u}_{il}^2 - \frac{1}{A_0}\mathcal{F}_\tau|\nabla u|^4.
	\end{align*}
\end{proof}

\subsection{Proof of Theorem \ref{40}}\label{301}

We begin this section by stating a central result in our argument, namely Proposition \ref{303}. The proof of Theorem \ref{40} is then given assuming the validity of Proposition \ref{303} -- this should serve to elucidate the ideas outlined at the end of Section \ref{302}. The proof of Proposition \ref{303} will be given later in the section, and consists of a series of technical lemmas. 

To this end, for $1>\delta_0 \geq A_0^{-1/10}$ a small number to be fixed later, define the set
\begin{align*}
\mathcal{I} = \bigg\{ i\in \{1,\dots,n\}: \bigg|w_{jj} + \frac{|\nabla u|^2}{2}\bigg| < 2\delta_0^2 |\nabla u|^2\bigg\}. 
\end{align*}
We remind the reader that all computations are implicitly carried out at $x_0$, and that we have the ordering $w_{11} \geq \dots \geq w_{nn}$. We will prove: 

\begin{prop}\label{303}
	There exists a constant $\widetilde{C}>1$ depending only on $n, f, \Gamma, \|g_0\|_{C^3(B_r)}$ and $\|\psi\|_{C^1(B_r)}$ such that if $A_0^{-1/10} \leq \delta_0 \leq \widetilde{C}^{-1}$ and 
	\begin{align}\label{125}
	\sum_{i,l} F_\tau^{ii} \widetilde{u}_{il}^2 < \widetilde{C}^{-1}\delta_0^4|\nabla u|^4 \mathcal{F}_\tau, 
	\end{align}
	then: 
	\begin{enumerate}
		\item $\mathcal{I} = \{n\}$, and 
		\item $\big| w_{n-1,n-1} - \frac{|\nabla u|^2}{2}\big|<2\delta_0|\nabla u|^2$. 
	\end{enumerate}
\end{prop}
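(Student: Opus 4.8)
## Proof Proposal for Proposition \ref{303}

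The plan is to work entirely at the maximum point $x_0$, using the diagonal structure of $W$ and $(F^{ij}_\tau)$ together with the ordering $w_{11}\geq\dots\geq w_{nn}$. The starting point is the relation \eqref{21}, which tells us $w_{ii}=\widetilde{u}_{ii}-u_i^2+\frac12|\nabla u|^2$, so that
\begin{align*}
w_{ii}+\frac{|\nabla u|^2}{2} = \widetilde{u}_{ii} - u_i^2 + |\nabla u|^2, \qquad w_{ii}-\frac{|\nabla u|^2}{2} = \widetilde{u}_{ii} - u_i^2.
\end{align*}
The hypothesis \eqref{125}, being a bound on $\sum_{i,l}F_\tau^{ii}\widetilde{u}_{il}^2$, controls $\widetilde{u}_{ii}$ only in a weighted sense; the first technical point is to convert it into a genuine smallness statement. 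Since each $F^{ii}_\tau\geq 0$ and $\mathcal{F}_\tau=\sum_i F^{ii}_\tau$, for any index $i$ with $F^{ii}_\tau\geq \mathcal{F}_\tau/\widetilde{C}'$ (a ``large'' diagonal direction, of which there is always at least one, namely $\arg\max_i F^{ii}_\tau$, with constant $n$) we get $\widetilde{u}_{ii}^2 \leq \widetilde{C}'\widetilde{C}^{-1}\delta_0^4|\nabla u|^4$, hence $|\widetilde{u}_{ii}|\leq C\widetilde{C}^{-1/2}\delta_0^2|\nabla u|^2$. For the remaining ``small'' directions we do not have pointwise control of $\widetilde{u}_{ii}$, and this asymmetry is exactly what forces the argument to be delicate.

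The core of the argument is a structural dichotomy driven by the equation $f^\tau(\lambda(g_0^{-1}W))=\psi e^{2u}$ and $\lambda(g_0^{-1}W)\in\Gamma^\tau$. First I would establish that $w_{11}$ cannot be much larger than $\frac12|\nabla u|^2$: combining \eqref{4} (i.e. $|\sum_l u_{1l}u_l|\leq C\rho^{-1/2}|\nabla u|^2$, which after \eqref{5} is $\leq C A_0^{-1}|\nabla u|^3$) with the off-diagonal relations $u_{1j}=u_1u_j+S_{1j}$ for $j\neq 1$ gives $u_{11}u_1 = -\sum_{j\neq1}u_{1j}u_j + O(A_0^{-1}|\nabla u|^3) = -u_1\sum_{j\neq1}u_j^2 + O(A_0^{-1}|\nabla u|^3 + |S||\nabla u|)$, so that if $u_1^2$ is a definite fraction of $|\nabla u|^2$ then $u_{11}=-(|\nabla u|^2-u_1^2)+O(\delta_0|\nabla u|^2)$ and hence $w_{11}+\frac{|\nabla u|^2}{2}=\widetilde{u}_{11}-u_1^2+|\nabla u|^2 = O(\delta_0|\nabla u|^2)$ — but more usefully, one shows $w_{11}\leq (\frac12+C\delta_0)|\nabla u|^2$ outright (the case $u_1^2$ small being handled separately using that $w_{11}$ is the \emph{largest} eigenvalue and $\widetilde{u}_{11}$ small in a large direction). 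Given this ceiling on $w_{11}$, the cone membership $\lambda(g_0^{-1}W)\in\Gamma\subseteq\Gamma_1^+$ forces $\sum_i w_{ii}>0$, so $w_{nn}>-(n-1)(\frac12+C\delta_0)|\nabla u|^2$, i.e. $w_{nn}$ is bounded below; and then the assumption $\mu_\Gamma^+>1$ enters. Here is where I expect the main obstacle: I need to show $w_{nn}$ is actually close to $-\frac12|\nabla u|^2$ from \emph{below} as well, equivalently that not too many eigenvalues can sit near $-\frac12|\nabla u|^2$. The mechanism is that if $\#\mathcal{I}\geq 2$, say $w_{n-1,n-1}, w_{nn}\approx -\frac12|\nabla u|^2$, while all other $w_{ii}\leq(\frac12+C\delta_0)|\nabla u|^2$, then after dividing by $\frac{|\nabla u|^2}{2}$ the eigenvalue vector is within $O(\delta_0)$ of a vector $\mu\in\overline{\Gamma^\tau}$ with $\mu_{n-1}=\mu_n=-1$ and $\mu_i\leq 1$ otherwise; but $\mu_\Gamma^+>1$ (equivalently $\mu_{\Gamma^\tau}^+>1$ is \emph{false} in general — I must be careful: one works with $(1,\dots,1,-1,-1)$-type vectors and the definition $(-\mu_\Gamma^+,1,\dots,1)\in\partial\Gamma$, noting $\mu_\Gamma^+>1$ means $(-1,1,\dots,1)\in\Gamma$ strictly, hence by the ordering and symmetry one can derive a contradiction with two entries at $-1$). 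This is precisely the place the paper says it invokes the cone properties of Yuan \cite{Yuan22}; I would likewise isolate the needed statement as a lemma about vectors in $\overline\Gamma$ with prescribed near-extremal coordinates.

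To finish, I would assemble the pieces in the stated order. Part (1): the ceiling $w_{ii}\leq(\frac12+C\delta_0)|\nabla u|^2$ shows no index with $w_{ii}\approx+\frac12|\nabla u|^2$ lies in $\mathcal{I}$, so $\mathcal{I}\subseteq\{$indices with $w_{ii}\approx-\frac12|\nabla u|^2\}$; the lower bound on $w_{nn}$ combined with the two-at-$-1$ contradiction shows $\#\mathcal{I}\leq 1$; and $\mathcal{I}\neq\emptyset$ because otherwise all $w_{ii}\geq(-\frac12+2\delta_0^2)|\nabla u|^2$... wait, that would need an argument — rather, $\mathcal{I}\neq\emptyset$ follows since if every $w_{ii}$ avoided the band around $-\frac12|\nabla u|^2$ then using $w_{11}\leq(\frac12+C\delta_0)|\nabla u|^2$ and the trace identity $\sum_i\widetilde u_{ii} = \sum_i(w_{ii}+u_i^2-\frac12|\nabla u|^2) = \sum_i w_{ii} - (\frac n2 -1)|\nabla u|^2$ together with the weighted smallness of $\widetilde u_{ii}$ in large directions, one forces a contradiction with $\mathcal{F}_\tau$ bounded below; combined with the ordering this pins $\mathcal{I}=\{n\}$. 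Part (2): given $\mathcal{I}=\{n\}$, the index $n-1$ satisfies $|w_{n-1,n-1}+\frac{|\nabla u|^2}{2}|\geq 2\delta_0^2|\nabla u|^2$; but $w_{n-1,n-1}\geq w_{nn}\approx-\frac12|\nabla u|^2$ and, by the cone argument again (now only one eigenvalue is allowed near $-1$), $w_{n-1,n-1}$ must be bounded below away from $-\frac12|\nabla u|^2$ by a definite amount — then, re-examining the off-diagonal computation for the index $n-1$ as above, if $u_{n-1}^2$ were small we'd contradict the lower bound, while if $u_{n-1}^2$ is a definite fraction of $|\nabla u|^2$ the computation $w_{n-1,n-1}=\frac12|\nabla u|^2 + \widetilde u_{n-1,n-1}-u_{n-1}^2 + \dots$ forces $w_{n-1,n-1}$ within $O(\delta_0)|\nabla u|^2$ of $\frac12|\nabla u|^2$, which is conclusion (2) after choosing $\widetilde C$ large enough that all the $C\delta_0, C\widetilde C^{-1/2}$ errors are absorbed into $\delta_0$. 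Throughout, the constants are chosen in the order $\widetilde C$ large (depending on $n,f,\Gamma$ and the $C^3,C^1$ norms via the $S$-terms), then $\delta_0\leq\widetilde C^{-1}$, then $A_0$ large enough that $A_0^{-1/10}\leq\delta_0$ and the $A_0^{-1}$ errors are negligible.
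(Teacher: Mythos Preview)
Your proposal has the right instincts in a few places (the off-diagonal computation $u_{jj}u_j=-u_j\sum_{l\neq j}u_l^2+O(A_0^{-1}|\nabla u|^3+|S||\nabla u|)$ is exactly the engine of the paper's Lemma~\ref{68}), but the overall architecture has a genuine gap: you misidentify what the Yuan-type cone input actually says, and your substitute arguments do not go through.

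The key lemma you are missing is Proposition~\ref{46}: for any $\Gamma\neq\Gamma_n^+$ there is $\theta=\theta(n,\Gamma)>0$ such that $F_\tau^{ii}\geq\theta\,\mathcal{F}_\tau$ whenever $i\in\{n-1,n\}$ \emph{or} $w_{ii}\leq 0$. This is a statement about the \emph{linearisation} $\partial f/\partial\lambda_i$, not about membership of specific vectors in $\overline\Gamma$. It resolves precisely the ``asymmetry'' you flag: the relevant indices (any $j\in\mathcal I$, since then $w_{jj}<0$; and $i=n-1$) are automatically ``large'' directions, so \eqref{125} gives genuine pointwise smallness of $\widetilde u_{jj}$ there. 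With this in hand, the paper's argument is short: for $j\in\mathcal I$ one gets $u_{jj}>-2\delta_0^2|\nabla u|^2$ (else \eqref{125} fails via $F_\tau^{jj}\widetilde u_{jj}^2\geq\theta\delta_0^4|\nabla u|^4\mathcal F_\tau$), and then $w_{jj}<(-\tfrac12+2\delta_0^2)|\nabla u|^2$ forces $u_j^2>(1-5\delta_0^2)|\nabla u|^2$, which can hold for at most one $j$.

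By contrast, your route fails at several points. First, your ceiling $w_{11}\leq(\tfrac12+C\delta_0)|\nabla u|^2$ in the case $u_1^2$ small invokes ``$\widetilde u_{11}$ small in a large direction'' --- but by concavity $F_\tau^{11}$ is the \emph{smallest} entry of the linearisation (since $w_{11}$ is the largest eigenvalue), so you have no lower bound on $F_\tau^{11}$ and hence no control of $\widetilde u_{11}$ from \eqref{125}. Second, your ``two-at-$-1$'' cone contradiction does not follow from $\mu_\Gamma^+>1$: for $\Gamma=\Gamma_1^+$ with $n\geq 5$ one has $\mu_\Gamma^+=n-1>1$ yet $(1,\dots,1,-1,-1)\in\Gamma$, so having two eigenvalues near $-\tfrac12|\nabla u|^2$ with the rest $\leq\tfrac12|\nabla u|^2$ is perfectly compatible with $\lambda\in\Gamma$. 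Third, your trace-identity argument for $\mathcal I\neq\emptyset$ is both vague and unnecessary: the paper obtains $\mathcal I\neq\emptyset$ (Lemma~\ref{68}) \emph{without} using \eqref{125} at all, simply by picking any $j$ with $u_j^2\geq\delta_0^2|\nabla u|^2$ and running your own off-diagonal computation to conclude $|w_{jj}+\tfrac12|\nabla u|^2|\leq C\delta_0^9|\nabla u|^2$. The fix is to replace your cone-membership heuristic with Proposition~\ref{46} and rebuild the argument around it.
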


Assuming the validity of Proposition \ref{303} for now, let us complete the proof of Theorem \ref{40}: 

\begin{proof}[Proof of Theorem \ref{40}]
	We start by fixing $\widetilde{C}$ sufficiently large so that Proposition \ref{303} applies. Then for $A_0> \widetilde{C}^{10}$ to be fixed later, if  $A_0^{-1/10} \leq \delta_0 \leq \widetilde{C}^{-1}$ and \eqref{125} is satisfied, then
	\begin{equation*}
	w_{n-1, n-1} = (1+a_{n-1})\frac{|\nabla u|^2}{2} \quad \text{and} \quad w_{nn} =-(1+a_n)\frac{|\nabla u|^2}{2}
	\end{equation*}
	for some $|a_{n-1}|, |a_n| \leq 4\delta_0$. On the other hand, since $w_{11}\geq \dots \geq w_{nn}$, for each $\alpha = 1,\dots,n-2$ we can write $w_{\alpha\alpha} = w_{n-1,n-1} + X_\alpha$ for some $X_\alpha\geq 0$. Therefore
	\begin{align}\label{39}
	& \begin{pmatrix}
	w_{11} \\
	\vdots \\
	w_{n-2, n-2} \\
	w_{n-1, n-1} \\
	w_{nn}
	\end{pmatrix} = \begin{pmatrix}
	X_1 \\
	\vdots \\
	X_{n-2} \\
	0 \\
	0
	\end{pmatrix} + \frac{|\nabla u|^2}{2}\underbrace{\begin{pmatrix}
		1+a_{n-1} \\
		\vdots \\
		1+a_{n-1} \\
		1+a_{n-1} \\
		-(1+a_n)
		\end{pmatrix}}_{\mathcal{B}},
	\end{align}
	with the first vector on the RHS of \eqref{39} clearly belonging to $\overline{\Gamma^\tau}$ for each $\tau \leq 1$, since each entry is nonnegative. We also observe that $\mathcal{B}$ is a perturbation of $\mathcal{B}_0 \defeq (1,\dots,1,-1)$, and that $\mathcal{B}_0\in\Gamma^\tau$ for any $\tau\leq 1$ since we assume $\mu_\Gamma^+>1$. Therefore, since $|a_{n-1}|, |a_n| \leq 4\delta_0$, for $\widetilde{C}$ sufficiently large it will hold that $\mathcal{B}\in\Gamma$ with $f^\tau(\mathcal{B}) \geq \frac{1}{2}f^\tau(\mathcal{B}_0)$. Monotonicity of $f$ then implies 
	\begin{align*}
	\psi e^{2u} = f^\tau(w_{11},\dots,w_{nn}) \geq \frac{|\nabla u|^2}{2}f^\tau(\mathcal{B}) \geq  \frac{|\nabla u|^2}{4}f^\tau(\mathcal{B}_0),
	\end{align*}
	which implies the desired gradient estimate. 
	
	It remains to address the case that, for the value of $\widetilde{C}$ fixed in the foregoing argument, \eqref{125} is not satisfied. Then 
		\begin{equation}\label{2}
	\sum_{i,l} F_\tau^{ii} \widetilde{u}_{il}^2 \geq  \widetilde{C}^{-1}A_0^{-2/5}|\nabla u|^4 \mathcal{F}_\tau,
	\end{equation}
	and substituting \eqref{2} into \eqref{1''} we therefore have
	\begin{align*}
	0 \geq- C\mathcal{F}_\tau(1+e^{2u})|\nabla u|^2 - C\rho\mathcal{F}_\tau\frac{|\nabla u|^4}{A_0}  + \widetilde{C}^{-1}A_0^{-2/5}\rho|\nabla u|^4 \mathcal{F}_\tau.
	\end{align*}
	Multiplying through by $\widetilde{C}A_0^{2/5}\rho$ then yields the estimate
	\begin{align}\label{419}
	0 & \geq - \widetilde{C}CA_0^{2/5}\rho (1+e^{2u})|\nabla u|^2 - \frac{\widetilde{C}C}{A_0^{3/5}} \rho^2|\nabla u|^4 + \rho^2 |\nabla u|^4.
	\end{align}
	It follows that if we choose $A_0 \geq \max\{(2\widetilde{C}C)^{5/3}, \widetilde{C}^{10}\}$ (where $C$ and $\widetilde{C}$ are the constants in \eqref{419}), then we have (for a possibly different constant $C$)
	\begin{align}
	0 & \geq -C\rho(1+e^{2u})|\nabla u|^2 + \frac{1}{2}\rho^2 |\nabla u|^4,
	\end{align}
	and therefore
	\begin{equation}
	H^2 = \rho^2 |\nabla u|^4 \leq C(1+e^{2u}) H.
	\end{equation}
	After dividing through by $H$ we again arrive at the desired gradient estimate.
\end{proof}

The rest of the section is devoted to the proof of Proposition \ref{303}, which we obtain through a series of three lemmas. In the first of these lemmas we show that if $A_0^{-1/10} \leq \delta_0 \leq \widetilde{C}^{-1}$ for $\widetilde{C}$ sufficiently large, then $\mathcal{I}\not=\emptyset$:

\begin{lem}\label{68}
	There exists a constant $\widetilde{C} >1$ depending only on $n, f, \Gamma, \|g_0\|_{C^3(B_r)}$ and $\|\psi\|_{C^1(B_r)}$ such that if $A_0^{-1/10} \leq \delta_0 \leq \widetilde{C}^{-1}$, then $\mathcal{I}\not=\emptyset$. 
\end{lem}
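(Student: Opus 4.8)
The plan is to prove Lemma~\ref{68} by contradiction, working at the maximum point $x_0$ of $H=\rho|\nabla u|^2$ with the set-up and normalizations of Section~\ref{302}. Assuming $\mathcal{I}=\emptyset$, I will show that every component $u_i=\partial_i u(x_0)$ of the gradient must be negligibly small, which is incompatible with $\sum_i u_i^2=|\nabla u|^2(x_0)$ (recall $|\nabla u|(x_0)\ge 1$) as soon as $A_0$, and hence $\widetilde C$, is chosen large.

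The core of the argument is the pointwise identity, valid at $x_0$ for each $i$,
\[
u_i\Big(w_{ii}+\frac{1}{2}|\nabla u|^2\Big)=\sum_{l=1}^n\widetilde{u}_{il}\,u_l .
\]
Indeed, by \eqref{21} one has $\widetilde{u}_{ii}=u_{ii}-S_{ii}=w_{ii}+u_i^2-\frac{1}{2}|\nabla u|^2$ and $\widetilde{u}_{il}=u_{il}-S_{il}=u_iu_l$ for $l\neq i$, so $\sum_l\widetilde{u}_{il}u_l=u_i\big(w_{ii}+u_i^2-\frac{1}{2}|\nabla u|^2\big)+u_i\sum_{l\neq i}u_l^2=u_i\big(w_{ii}+\frac{1}{2}|\nabla u|^2\big)$. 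On the other hand $\sum_l\widetilde{u}_{il}u_l=\sum_l u_{il}u_l-\sum_l S_{il}u_l$, and here $\big|\sum_l u_{il}u_l\big|\le C|\nabla u|^3/A_0$ by \eqref{26}, while $\big|\sum_l S_{il}u_l\big|\le C|S|\,|\nabla u|\le C|\nabla u|^3/A_0$ by the second bound in \eqref{5}. Combining, I obtain $\big|u_i(w_{ii}+\frac{1}{2}|\nabla u|^2)\big|\le C|\nabla u|^3/A_0$ for every $i$.

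Now suppose $\mathcal{I}=\emptyset$, i.e.\ $\big|w_{ii}+\frac{1}{2}|\nabla u|^2\big|\ge 2\delta_0^2|\nabla u|^2$ for every $i$. Dividing the last estimate through by this lower bound gives $|u_i|\le C|\nabla u|/(2\delta_0^2A_0)$ for every $i$, and summing squares yields $|\nabla u|^2\le nC^2|\nabla u|^2/(4\delta_0^4A_0^2)$, hence $\delta_0^4A_0^2\le nC^2/4$. Since $\delta_0\ge A_0^{-1/10}$ we have $\delta_0^4A_0^2\ge A_0^{8/5}$, so $A_0\le(nC^2/4)^{5/8}$; but $\delta_0\le\widetilde C^{-1}$ forces $A_0\ge\delta_0^{-10}\ge\widetilde C^{10}$. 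Thus, choosing $\widetilde C>1$ large enough that $\widetilde C^{10}>(nC^2/4)^{5/8}$ --- a choice depending only on $n$ and on the constant $C$, hence only on $n,f,\Gamma,\|g_0\|_{C^3(B_r)}$ and $\|\psi\|_{C^1(B_r)}$ --- we reach a contradiction, so $\mathcal{I}\neq\emptyset$.

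I do not expect a genuine obstacle here: the proof uses only the first-order condition \eqref{120}/\eqref{26}, the relations \eqref{21}, and the smallness bounds \eqref{5}, and it requires neither hypothesis \eqref{125} of Proposition~\ref{303}, nor the concavity of $f$, nor $\mu_\Gamma^+>1$. The only points needing care are (i) that $C$ above, inherited from \eqref{26} and \eqref{5}, is indeed independent of $\tau$, of $\inf_{B_r}\psi$, and of any lower bound on $u$, so that $\widetilde C$ has the asserted dependence; and (ii) that the identity is applied only at $x_0$, where $W=(w_{ij})$ has been diagonalized and the normalizations \eqref{21}, \eqref{4}--\eqref{5}, \eqref{26} are in force.
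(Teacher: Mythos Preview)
Your proof is correct and is essentially the same as the paper's. Both rest on the identity $u_i\big(w_{ii}+\tfrac12|\nabla u|^2\big)=\sum_l \widetilde u_{il}u_l$ at $x_0$ together with \eqref{26} and \eqref{5}; the paper argues the forward direction (pick $j$ with $u_j^2\ge \delta_0^2|\nabla u|^2$ and deduce $j\in\mathcal I$), while you run the contrapositive, but the substance is identical.
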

\begin{proof}
	It is clear that for $\delta_0\leq \sqrt{1/n}$, there is at least one index $j\in \{1,\dots,n\}$ such that $u_j^2 \geq \delta_0^2|\nabla u|^2$. We claim that for such an index $j$, it holds that $j\in\mathcal{I}$. We follow the method of Guan \& Wang \cite{GW03b}. We know that for $l\not=j$, $u_{jl} = u_j u_l + S_{jl}$ and therefore
	\begin{equation*}
	\sum_{l\not=j} u_{jl}u_l = \sum_{l\not=j}u_ju_l^2 + \sum_{l\not=j} S_{jl}u_l. 
	\end{equation*}
	It follows that 
	\begin{align*}
	\sum_{l=1}^n u_{jl}u_l & = \sum_{l\not=j} u_ju_l^2 + \sum_{l\not=j} S_{jl}u_l + u_{jj}u_j \nonumber \\
	& = u_j|\nabla u|^2 + \sum_{l\not=j} S_{jl}u_l + u_{jj}u_j - u_j^3 \nonumber \\
	& = \sum_{l\not=j} S_{jl}u_l - u_j\bigg(\big(u_j^2 - |\nabla u|^2\big)- u_{jj}\bigg). 
	\end{align*}
	Hence
	\begin{align*}
	\bigg|u_j\bigg(\big(u_j^2 - |\nabla u|^2\big)- u_{jj}\bigg) - \sum_{l\not=j}S_{jl}u_l\bigg| = \bigg|\sum_{l=1}^n u_{jl}u_l\bigg| \stackrel{\eqref{26}}{\leq} C\frac{|\nabla u|^3}{A_0}. 
	\end{align*}
	It follows that
	\begin{align}\label{30}
	\big|u_j\big|\big|\big(u_j^2 - |\nabla u|^2\big)- u_{jj}\big| \leq   C\frac{|\nabla u|^3}{A_0} + \bigg|\sum_{l\not=j}S_{jl}u_l\bigg|  \stackrel{\eqref{5}}{\leq}  C\frac{|\nabla u|^3}{A_0} \leq C\delta_0^{10}|\nabla u|^3,
	\end{align}
	where to reach the last inequality we have used $A_0^{-1/10} \leq \delta_0$. Substituting $|u_j| \geq \delta_0|\nabla u|$ back into \eqref{30} yields
	\begin{align}\label{31}
	\big|\big(u_j^2 - |\nabla u|^2\big)- u_{jj}\big| \leq C\delta_0^9|\nabla u|^2.
	\end{align}
	Next, substituting $u_{jj} = w_{jj} + u_j^2 - \frac{1}{2}|\nabla u|^2 + S_{jj}$ into \eqref{31} and again applying \eqref{5} we obtain 
	\begin{align}\label{128}
	\bigg|w_{jj} + \frac{1}{2}|\nabla u|^2\bigg| \leq C\delta_0^9|\nabla u|^2 + \frac{|\nabla u|^2}{A_0} = C\delta_0^9|\nabla u|^2 + \delta_0^{10}|\nabla u|^2. 
	\end{align}
	It is clear that one can then choose $\widetilde{C}$ sufficiently large so that the right hand side of \eqref{128} is less than $2\delta_0^2|\nabla u|^2$ for $\delta_0 \leq \widetilde{C}^{-1}$. Once such a choice is made, we see that \eqref{128} implies $j\in\mathcal{I}$, which proves the claim and therefore the lemma.
\end{proof}

In our subsequent arguments we will use the following proposition, which is essentially a consequence of \cite[Theorem 1.4]{Yuan22} -- see Appendix \ref{AA} for a summary of the proof.

\begin{prop}\label{46}
	Suppose $\Gamma$ satisfies \eqref{21'} and \eqref{22'} with $\Gamma\not=\Gamma_n^+$ (equivalently, $\mu_\Gamma^+>0$). Then there exists a constant $\theta=\theta(n,\Gamma)>0$ such that for any $\lambda\in\Gamma$ with $\lambda_1\geq \dots \geq \lambda_n$, 
	\begin{equation}\label{37'}
	\frac{\partial f}{\partial \lambda_i}(\lambda) \geq \theta \sum_{j=1}^n \frac{\partial f}{\partial\lambda_j}(\lambda) \quad \text{if }i\in\{n-1,n\} \text{ or }\lambda_i\leq 0. 
	\end{equation}
\end{prop}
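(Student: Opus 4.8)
The plan is to deduce the estimate from three structural facts plus a single test-vector computation. First I would record the \emph{anti-ordering of the gradient}: if $\lambda\in\Gamma$ with $\lambda_1\geq\dots\geq\lambda_n$, then $f_{\lambda_1}(\lambda)\leq\dots\leq f_{\lambda_n}(\lambda)$. This is standard: for $p<q$, the restriction of $f$ to the segment joining $\lambda$ to the vector obtained by interchanging its $p$-th and $q$-th entries is concave and symmetric about its midpoint, hence nondecreasing on the first half, so its derivative at $\lambda$ --- which equals $(\lambda_p-\lambda_q)\big(f_{\lambda_q}(\lambda)-f_{\lambda_p}(\lambda)\big)$ --- is nonnegative. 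Second, concavity of $f$ together with Euler's identity $f(\lambda)=\sum_j\lambda_j f_{\lambda_j}(\lambda)$ gives the linear inequality
\begin{align*}
\sum_{j=1}^n f_{\lambda_j}(\lambda)\,\mu_j\;\geq\; f(\mu)\;\geq\;0\qquad\text{for all }\lambda\in\Gamma,\ \mu\in\overline{\Gamma},
\end{align*}
which is the only device used to bound individual derivatives from below. Third, since $\Gamma$ is an open convex cone with $\Gamma_n^+\subseteq\Gamma$, one has $\Gamma+\overline{\Gamma}\subseteq\Gamma$ and every vector with nonnegative entries lies in $\overline{\Gamma_n^+}\subseteq\overline{\Gamma}$; in particular adding a vector with nonnegative entries to a point of $\Gamma$ produces a point of $\Gamma$.

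Set $i_0:=\min\{i\in\{1,\dots,n\}:(1,\dots,1,0,\dots,0)\in\Gamma\text{ with exactly }i\text{ ones}\}$, and abbreviate $\mathcal{F}:=\sum_{j=1}^n f_{\lambda_j}(\lambda)$. I would reduce \eqref{37'} to the single claim: \emph{$f_{\lambda_i}(\lambda)\geq\theta\,\mathcal{F}$ whenever $i_0\leq i\leq n$.} The reduction rests on two points. (i) If $\lambda\in\Gamma$ is ordered and $\lambda_i\leq0$, then $\lambda_i,\lambda_{i+1},\dots,\lambda_n\leq0$, so adding to $\lambda$ the nonnegative vector that raises its first $i$ entries to a common value $c_0>\max_j\lambda_j$ and its last $n-i$ entries to $0$ yields a point of $\Gamma$ proportional to the vector with $i$ ones and $n-i$ zeros; hence $i\geq i_0$. (ii) Since $\mu_\Gamma^+>0$, the point $(0,1,\dots,1)$ is a proper convex combination of $(-\mu_\Gamma^+,1,\dots,1)\in\partial\Gamma$ and $(1,\dots,1)\in\Gamma$, so $(0,1,\dots,1)\in\Gamma$, and by symmetry $(1,\dots,1,0)\in\Gamma$; hence $i_0\leq n-1$. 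Combining (i) and (ii), the index set $\{n-1,n\}\cup\{i:\lambda_i\leq0\}$ appearing in \eqref{37'} is contained in $\{i_0,\dots,n\}$, so the reduced claim suffices.

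To prove the reduced claim, the case $i=n$ is immediate from the anti-ordering ($f_{\lambda_n}=\max_j f_{\lambda_j}(\lambda)\geq\tfrac1n\mathcal F$), so fix $i$ with $i_0\leq i\leq n-1$. Since the vector with $i$ ones and $n-i$ zeros lies in the open set $\Gamma$, decreasing its last $n-i$ entries by a small amount and then rescaling produces $\mu^{(i)}=(M,\dots,M,-1,\dots,-1)\in\Gamma$ (with $i$ copies of $M$); using $\Gamma+\overline{\Gamma}\subseteq\Gamma$ to increase $M$ if necessary and taking the largest value needed over the finitely many admissible $i$, I may take $M=M(n,\Gamma)$. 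Plugging $\mu=\mu^{(i)}$ into the linear inequality gives $M\sum_{j\leq i}f_{\lambda_j}(\lambda)\geq\sum_{j>i}f_{\lambda_j}(\lambda)$, whence $\mathcal{F}\leq(1+M)\sum_{j\leq i}f_{\lambda_j}(\lambda)\leq(1+M)\,i\,f_{\lambda_i}(\lambda)$ by the anti-ordering, so $f_{\lambda_i}(\lambda)\geq\frac{1}{(1+M)n}\mathcal{F}$. Then $\theta:=\frac{1}{(1+M)n}$ works.

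The only step with any real content is the construction of the test vector $\mu^{(i)}$, and in particular the assertion $i_0\leq n-1$ --- i.e.\ that \eqref{37'} holds at all for $i=n-1$; this is exactly where the hypothesis $\mu_\Gamma^+>0$ (equivalently $\Gamma\neq\Gamma_n^+$) is indispensable, and it fails for $\Gamma=\Gamma_n^+$, where $i_0=n$. All remaining steps are routine manipulations with the anti-ordering and the linear inequality. As an alternative to this self-contained argument, the estimate can also be extracted from \cite[Theorem 1.4]{Yuan22}, which is the route summarised in Appendix \ref{AA}.
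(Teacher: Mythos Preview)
Your proof is correct and follows essentially the same route as the paper's Appendix~\ref{AA}: both rest on the anti-ordering $f_{\lambda_1}\leq\dots\leq f_{\lambda_n}$, the linear inequality $\sum_j f_{\lambda_j}(\lambda)\mu_j\geq f(\mu)$ tested against a vector in $\Gamma$ with negative tail entries, and the combinatorial index $i_0=n-\kappa_\Gamma$ (your $i_0$ is exactly the complement of the paper's $\kappa_\Gamma$). The only cosmetic differences are that you justify $\lambda_i\leq0\Rightarrow i\geq i_0$ via the $\Gamma+\overline{\Gamma}\subseteq\Gamma$ argument rather than quoting the equivalent characterisation of $\kappa_\Gamma$, and that you run the test-vector step for each $i\in\{i_0,\dots,n-1\}$ separately whereas the paper does it once at $i=n-\kappa_\Gamma$ and propagates by anti-ordering; either way yields the same $\theta$.
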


We are now in a position to show that if one additionally assumes \eqref{125} holds for $\widetilde{C}$ sufficiently large, then $|\mathcal{I}|=\{n\}$ (recall once again the ordering $w_{11} \geq \dots \geq w_{nn}$):

\begin{lem}\label{63}
	There exists a constant $\widetilde{C}>1$ depending only on $n, f, \Gamma, \|g_0\|_{C^3(B_r)}$ and $\|\psi\|_{C^1(B_r)}$ such that if $A_0^{-1/10} \leq \delta_0 \leq \widetilde{C}^{-1}$ and \eqref{125} is satisfied, then $|\mathcal{I}|=\{n\}$. 
\end{lem}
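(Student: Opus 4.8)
The plan is to show that $\mathcal{I}$ cannot contain two indices by combining Lemma \ref{68} (which gives $\mathcal{I}\neq\emptyset$) with the structural information coming from assumption \eqref{125}. Suppose for contradiction that $|\mathcal{I}|\geq 2$, i.e. there are at least two indices $j$ with $|w_{jj}+\frac{|\nabla u|^2}{2}|<2\delta_0^2|\nabla u|^2$; since $w_{11}\geq\cdots\geq w_{nn}$, these must be indices $n-1$ and $n$ (possibly more, but in particular $n-1,n\in\mathcal{I}$). So we are assuming
\begin{align}\label{plan1}
\Big|w_{n-1,n-1} + \tfrac{|\nabla u|^2}{2}\Big| < 2\delta_0^2|\nabla u|^2 \quad\text{and}\quad \Big|w_{nn} + \tfrac{|\nabla u|^2}{2}\Big| < 2\delta_0^2|\nabla u|^2.
\end{align}
The goal is to derive a contradiction with \eqref{125} for $\widetilde C$ large.

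**The main idea** is that \eqref{125} forces, for every index $i$ with $F^{ii}_\tau$ not too small relative to $\mathcal{F}_\tau$, that $|\widetilde u_{ii}| = |u_{ii}-S_{ii}|$ is small compared to $|\nabla u|^2$; combined with \eqref{21} this pins down $u_{ii}$ and hence forces a relation among the $w_{ii}$. Concretely, from \eqref{125} we get $F^{ii}_\tau\widetilde u_{ii}^2 < \widetilde C^{-1}\delta_0^4|\nabla u|^4\mathcal{F}_\tau$ for each fixed $i$, so $\widetilde u_{ii}^2 < \widetilde C^{-1}\delta_0^4|\nabla u|^4 \,\mathcal{F}_\tau/F^{ii}_\tau$. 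Now invoke Proposition \ref{46}: for $i\in\{n-1,n\}$ we have $F^{ii}_\tau \geq \theta\mathcal{F}_\tau$ (here one must check that Proposition \ref{46}, stated for $(f,\Gamma)$, transfers to $(f^\tau,\Gamma^\tau)$ — which it does, since $\Gamma^\tau$ also satisfies \eqref{21'}, \eqref{22'}, and $\mu_{\Gamma^\tau}^+ = \mu_\Gamma^+ + (n-1)(1-\tau) \geq \mu_\Gamma^+ > 0$, so $\Gamma^\tau\neq\Gamma_n^+$). Hence for $i\in\{n-1,n\}$,
\begin{align}\label{plan2}
|\widetilde u_{ii}| < \widetilde C^{-1/2}\theta^{-1/2}\delta_0^2|\nabla u|^2,
\end{align}
and, using \eqref{21} and the bound $|S|\leq|\nabla u|^2/A_0\leq\delta_0^{10}|\nabla u|^2$ from \eqref{5}, this says $|u_{ii} + u_i^2 - \tfrac{1}{2}|\nabla u|^2 - w_{ii} - 2S_{ii} + S_{ii}|$... more cleanly: $w_{ii} = \widetilde u_{ii} - u_i^2 + \tfrac12|\nabla u|^2 - 2S_{ii} + (\text{correction})$ — I would just write $w_{ii} = u_{ii}-u_i^2+\tfrac12|\nabla u|^2 - S_{ii}$ and $\widetilde u_{ii}=u_{ii}-S_{ii}$, so $w_{ii} = \widetilde u_{ii} - u_i^2 + \tfrac12|\nabla u|^2$. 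Therefore for $i\in\{n-1,n\}$, combining with \eqref{plan1},
\begin{align*}
u_i^2 = \widetilde u_{ii} + \tfrac12|\nabla u|^2 - w_{ii} = \widetilde u_{ii} + |\nabla u|^2 + \big(w_{ii}+\tfrac12|\nabla u|^2\big)^{*},
\end{align*}
wait — this gives $u_{n-1}^2, u_n^2 \geq |\nabla u|^2 - (2\delta_0^2 + \widetilde C^{-1/2}\theta^{-1/2}\delta_0^2)|\nabla u|^2 > \tfrac12|\nabla u|^2$ each (for $\delta_0$ small), so $u_{n-1}^2 + u_n^2 > |\nabla u|^2$, a contradiction. **This is the crux**, and it is clean once the transfer of Proposition \ref{46} to $(f^\tau,\Gamma^\tau)$ is justified.

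**The main obstacle** I anticipate is the bookkeeping around which index bounds are available: Proposition \ref{46} only guarantees $F^{ii}_\tau\geq\theta\mathcal{F}_\tau$ for $i\in\{n-1,n\}$ or for indices with $\lambda_i\leq 0$, so one cannot directly control $\widetilde u_{ii}$ for general $i$. But since $|\mathcal{I}|\geq 2$ with the ordering forces $n-1,n\in\mathcal{I}$, it is exactly these two privileged indices that enter, and the argument closes. (Alternatively, if one only knows some $j\in\mathcal{I}$ with $w_{jj}$ very negative, one still has $\lambda_j = w_{jj}$ very negative hence $F^{jj}_\tau\geq\theta\mathcal{F}_\tau$ by the second clause of \eqref{37'}, which gives another route.) A secondary technical point is making sure all the thresholds line up: one fixes $\theta=\theta(n,\Gamma^\tau)$ — and since $\Gamma^\tau$ varies with $\tau$, one should note (as can be extracted from Appendix \ref{AA}/\cite{Yuan22}) that $\theta$ can be taken uniform in $\tau\in[0,1]$, e.g. bounded below by $\theta(n,\Gamma)$ up to a harmless constant, because enlarging the cone only helps; then choose $\widetilde C$ large enough (depending on $n, f,\Gamma$ and the $C^3$, $C^1$ norms, via the constants $C$ appearing in \eqref{5}) that $2\delta_0^2 + \widetilde C^{-1/2}\theta^{-1/2}\delta_0^2 + \delta_0^{10} < \tfrac12$ when $\delta_0\leq\widetilde C^{-1}$. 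With these choices the contradiction is immediate, completing the proof that $|\mathcal{I}|=\{n\}$.
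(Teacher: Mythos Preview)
Your approach is essentially the same as the paper's: use \eqref{125} together with Proposition~\ref{46} to bound $|\widetilde u_{jj}|$ for $j\in\mathcal{I}$, then use $w_{jj}=\widetilde u_{jj}-u_j^2+\tfrac12|\nabla u|^2$ and the definition of $\mathcal{I}$ to deduce $u_j^2>(1-c\delta_0^2)|\nabla u|^2$, which cannot hold for two distinct indices.

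There is one small gap in your main line. You assert that if $|\mathcal{I}|\geq 2$ then, by the ordering $w_{11}\geq\cdots\geq w_{nn}$, necessarily $n-1,n\in\mathcal{I}$. This is not justified: membership in $\mathcal{I}$ is a \emph{two-sided} condition $|w_{jj}+\tfrac12|\nabla u|^2|<2\delta_0^2|\nabla u|^2$, so the indices in $\mathcal{I}$ form a consecutive block which need not terminate at $n$ (e.g.\ $w_{nn}$ could in principle lie below $-\tfrac12|\nabla u|^2-2\delta_0^2|\nabla u|^2$). Consequently you cannot directly invoke the ``$i\in\{n-1,n\}$'' clause of Proposition~\ref{46}. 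The fix is exactly the alternative you mention in your parenthetical: for \emph{any} $j\in\mathcal{I}$ one has $w_{jj}<0$ (for $\delta_0$ small), so the ``$\lambda_i\leq 0$'' clause of \eqref{37'} gives $F^{jj}_\tau\geq\theta\mathcal{F}_\tau$, and your estimate \eqref{plan2} then holds for every $j\in\mathcal{I}$. This is precisely how the paper proceeds (see the justification after \eqref{56}), and with this correction your argument and the paper's coincide.

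Your remark about the $\tau$-uniformity of $\theta$ is a fair point; the paper leaves this implicit, but since $\Gamma\subseteq\Gamma^\tau$ for $\tau\leq 1$ the relevant cone parameters only improve, and a uniform $\theta$ can indeed be extracted from the proof in Appendix~\ref{AA}.
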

\begin{proof}
	We first claim that if $\widetilde{C}$ is sufficiently large and \eqref{125} holds, then $u_{jj}> -2\delta_0^2 |\nabla u|^2$ for $j\in \mathcal{I}$. Indeed, suppose for a contradiction that this is not the case. Then we would have
	\begin{align}\label{56}
	\sum_{i,l} F_\tau^{ii} \widetilde{u}_{il}^2  \geq F_\tau^{jj}\widetilde{u}_{jj}^2 \stackrel{\eqref{5}}{\geq} \frac{1}{2}F^{jj}_\tau u_{jj}^2 - F^{jj}_\tau\frac{|\nabla u|^4}{A_0^2} & \geq 2F_\tau^{jj}\delta_0^4|\nabla u|^4 - F^{jj}_\tau\delta_0^{20}|\nabla u|^4  \nonumber \\
	& \geq F^{jj}_\tau \delta_0^4|\nabla u|^4 \nonumber \\
	& \geq \theta \delta_0^4|\nabla u|^4 \mathcal{F}_\tau,
	\end{align}
	with the last inequality following from Proposition \ref{46} -- note that Proposition \ref{46} applies in this case, since $w_{jj}<0$ by virtue of $j\in\mathcal{I}$ if $\widetilde{C}$ is sufficiently large. But this contradicts \eqref{125} if $\widetilde{C}$ is sufficiently large, proving the claim.

	By the claim, we may therefore suppose that $\widetilde{C}$ is large enough so that $u_{jj}> -2\delta_0^2 |\nabla u|^2$ whenever $j\in \mathcal{I}$. Then for $j\in\mathcal{I}$, we therefore have
	\begin{align*}
	-2\delta_0^2|\nabla u|^2 - u_j^2 + \frac{1}{2}|\nabla u|^2 - S_{jj} < u_{jj} - u_j^2 + \frac{1}{2}|\nabla u|^2 - S_{jj} = w_{jj} < -\frac{1}{2}|\nabla u|^2 + 2\delta_0^2|\nabla u|^2,
	\end{align*}
	with the last inequality following from the definition of $\mathcal{I}$. That is,
	\begin{align}\label{57}
	-u_j^2 < (-1+4\delta_0^2)|\nabla u|^2 + S_{jj} \stackrel{\eqref{5}}{<} (-1+4\delta_0^2)|\nabla u|^2 + \delta_0^{10}|\nabla u|^2 < (-1+5\delta_0^2)|\nabla u|^2.
	\end{align}
	Clearly \eqref{57} cannot hold for more than one index if $10\delta_0^2<1$. Hence $|\mathcal{I}| \leq 1$ for $\widetilde{C}$ sufficiently large, and after increasing $\widetilde{C}$ further if necessary so that $\mathcal{I}\not=\emptyset$ (recall that this is possible by Lemma \ref{68}), then it must be the case that $|\mathcal{I}|=1$, i.e.~$\mathcal{I} = \{n\}$.
\end{proof}

To finish the proof of Proposition \ref{303} it remains to show (after taking $\widetilde{C}$ larger if necessary) that $| w_{n-1,n-1} - \frac{|\nabla u|^2}{2}|<2\delta_0|\nabla u|^2$. This is the focus of the next lemma:

\begin{lem}\label{65}
	There exists a constant $\widetilde{C}>1$ depending only on $n, f, \Gamma, \|g_0\|_{C^3(B_r)}$ and $\|\psi\|_{C^1(B_r)}$ such that if $A_0^{-1/10} \leq \delta_0 \leq \widetilde{C}^{-1}$ and \eqref{125} is satisfied, then 
	\begin{align*}
	\bigg| w_{n-1,n-1} - \frac{|\nabla u|^2}{2}\bigg|<2\delta_0|\nabla u|^2
	\end{align*}
\end{lem}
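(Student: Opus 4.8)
The plan is to argue by contradiction. Suppose that $w_{n-1,n-1} - \frac{|\nabla u|^2}{2} \geq 2\delta_0|\nabla u|^2$ or $w_{n-1,n-1} - \frac{|\nabla u|^2}{2} \leq -2\delta_0|\nabla u|^2$, and in each case derive that $\sum_{i,l}F_\tau^{ii}\widetilde{u}_{il}^2$ is bounded below by a constant multiple of $\delta_0^4|\nabla u|^4\mathcal{F}_\tau$ with a constant large enough to contradict \eqref{125} once $\widetilde{C}$ is chosen appropriately. Throughout I will use that, by Lemma \ref{63}, $\mathcal{I}=\{n\}$, so that for every index $j\leq n-1$ we have $|w_{jj}+\frac{|\nabla u|^2}{2}|\geq 2\delta_0^2|\nabla u|^2$; combined with the ordering $w_{11}\geq\dots\geq w_{nn}$ this forces $w_{jj}+\frac{|\nabla u|^2}{2} \geq 2\delta_0^2|\nabla u|^2$ for all $j\leq n-1$, i.e.~$w_{jj} > (-\frac12 + 2\delta_0^2)|\nabla u|^2$, while $w_{nn}<(-\frac12+2\delta_0^2)|\nabla u|^2$ — in fact $w_{nn}$ is a perturbation of $-\frac{|\nabla u|^2}{2}$ by the proof of Lemma \ref{63} (see \eqref{57} with $j=n$, giving $w_{nn} = -(1+a_n)\frac{|\nabla u|^2}{2}$ for $|a_n|$ small).

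The first case, $w_{n-1,n-1} \geq \frac{|\nabla u|^2}{2} + 2\delta_0|\nabla u|^2$, is the easier one: then $w_{\alpha\alpha}\geq\frac{|\nabla u|^2}{2}+2\delta_0|\nabla u|^2$ for all $\alpha\leq n-1$, so all eigenvalues except $w_{nn}$ are bounded well away from their ``expected'' value $\frac{|\nabla u|^2}{2}$ on the high side, and one computes using the equation $f^\tau(w_{11},\dots,w_{nn}) = \psi e^{2u}$ together with the gradient relation $\sum_l u_{il}u_l = -\frac{\rho_i}{2\rho}|\nabla u|^2$ and \eqref{5} that this is incompatible with the $C^0$-controlled right side unless $|\nabla u|$ is bounded — wait, more carefully: the cleaner route is to extract a lower bound on $\widetilde u_{n-1,n-1}$. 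Since $w_{n-1,n-1} = u_{n-1,n-1} - u_{n-1}^2 + \frac12|\nabla u|^2 - S_{n-1,n-1}$, using $u_{n-1}^2 \leq |\nabla u|^2$ and \eqref{5}, the hypothesis $w_{n-1,n-1}-\frac{|\nabla u|^2}{2}\geq 2\delta_0|\nabla u|^2$ rearranges to $\widetilde u_{n-1,n-1} = u_{n-1,n-1}-S_{n-1,n-1} \geq 2\delta_0|\nabla u|^2 + u_{n-1}^2 - S_{n-1,n-1}+\ldots$; hmm, the sign of $u_{n-1}^2$ is wrong for a clean bound. The correct approach, as in Khomrutai and as the structure of the preceding lemmas suggests, is instead to bound $\widetilde u_{jj}$ for a well-chosen index and invoke Proposition \ref{46}: one shows that under the negation, there is an index $j\in\{n-1,n\}$ with $|\widetilde u_{jj}|\geq c\delta_0|\nabla u|^2$, whence $F_\tau^{jj}\widetilde u_{jj}^2 \geq \theta c^2\delta_0^2|\nabla u|^4\mathcal{F}_\tau$ by Proposition \ref{46}, and since $\delta_0^2 \geq \delta_0^4/\widetilde C^2$ this beats $\widetilde C^{-1}\delta_0^4|\nabla u|^4\mathcal{F}_\tau$.

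For the second case, $w_{n-1,n-1}\leq\frac{|\nabla u|^2}{2}-2\delta_0|\nabla u|^2$: combined with $w_{n-1,n-1}>(-\frac12+2\delta_0^2)|\nabla u|^2$ from $\mathcal{I}=\{n\}$, we get $w_{n-1,n-1}\in(-\frac12+2\delta_0^2,\ \frac12-2\delta_0)|\nabla u|^2$, a ``gap'' away from both $\pm\frac{|\nabla u|^2}{2}$. I would then unwind $w_{n-1,n-1} = u_{n-1,n-1}-u_{n-1}^2+\frac12|\nabla u|^2 - S_{n-1,n-1}$ to solve for $u_{n-1}^2$: the upper bound on $w_{n-1,n-1}$ gives $u_{n-1}^2 \geq u_{n-1,n-1} - S_{n-1,n-1} + 2\delta_0|\nabla u|^2 \geq 2\delta_0|\nabla u|^2 + \widetilde u_{n-1,n-1}$. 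If $\widetilde u_{n-1,n-1}\geq -\delta_0|\nabla u|^2$ then $u_{n-1}^2 \geq \delta_0|\nabla u|^2$, which places $n-1$ among the ``large gradient'' indices; but the argument in Lemma \ref{68} (with $\delta_0$ replaced by $\sqrt{\delta_0}$) shows that any index $j$ with $u_j^2\geq\delta_0|\nabla u|^2$ satisfies $|w_{jj}+\frac12|\nabla u|^2|\leq C\delta_0^{9/2}|\nabla u|^2$, i.e.~$j\in\mathcal{I}$ — contradicting $\mathcal{I}=\{n\}$ once $\widetilde C$ is large. Hence $\widetilde u_{n-1,n-1} < -\delta_0|\nabla u|^2$, so $F_\tau^{n-1,n-1}\widetilde u_{n-1,n-1}^2 \geq \theta\delta_0^2|\nabla u|^4\mathcal{F}_\tau$ by Proposition \ref{46} (index $n-1$ qualifies regardless of sign), again contradicting \eqref{125}. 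Assembling both cases and choosing $\widetilde C$ larger than all the implicit constants completes the proof.

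The main obstacle I anticipate is the bookkeeping in the second case: carefully tracking which of the two sub-alternatives ($\widetilde u_{n-1,n-1}$ small vs. very negative) occurs, making sure the ``re-run Lemma \ref{68} with $\sqrt{\delta_0}$'' step genuinely produces a contradiction with $\mathcal{I}=\{n\}$ (one needs $C\delta_0^{9/2}<2\delta_0^2$, fine for $\widetilde C$ large, and one needs the index to be $\leq n-1$, which holds since $u_n^2$ being large would via Lemma \ref{68} put $n\in\mathcal{I}$ consistently but we need a \emph{second} index — so actually the cleanest phrasing is: if $u_{n-1}^2\geq\delta_0|\nabla u|^2$ then $n-1\in\mathcal{I}$, and since also $n\in\mathcal{I}$, $|\mathcal{I}|\geq 2$, contradiction). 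A secondary point requiring care is confirming that Proposition \ref{46} is applicable — it requires $\lambda\in\Gamma$ with the ordering, and asks only that $i\in\{n-1,n\}$, which is exactly the index we use, so $\lambda_{n-1}$'s sign is irrelevant. All constants $C$ depend only on the allowed quantities $n,f,\Gamma,\|g_0\|_{C^3},\|\psi\|_{C^1}$, so absorbing them into a final choice of $\widetilde C$ is legitimate.
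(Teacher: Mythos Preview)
Your strategy is essentially the same as the paper's: split into the two one-sided violations, and in each case either force $|\widetilde u_{n-1,n-1}|\gtrsim\delta_0|\nabla u|^2$ (whence Proposition~\ref{46} contradicts \eqref{125}) or force $u_{n-1}^2\geq\delta_0|\nabla u|^2$ (whence the proof of Lemma~\ref{68} gives $n-1\in\mathcal I$, contradicting Lemma~\ref{63}). Your Case~2 matches the paper's Step~1 almost verbatim; the dichotomy you phrase in terms of $\widetilde u_{n-1,n-1}\gtrless -\delta_0|\nabla u|^2$ is the paper's dichotomy $u_{n-1}^2\gtrless\delta_0|\nabla u|^2$ in disguise.

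Your Case~1, however, is tangled by a non-issue. The sign of $u_{n-1}^2$ is \emph{not} wrong: from $w_{n-1,n-1}-\tfrac12|\nabla u|^2\geq 2\delta_0|\nabla u|^2$ you get directly
\[
\widetilde u_{n-1,n-1}=w_{n-1,n-1}+u_{n-1}^2-\tfrac12|\nabla u|^2\;\geq\;2\delta_0|\nabla u|^2+u_{n-1}^2\;\geq\;2\delta_0|\nabla u|^2,
\]
so $F_\tau^{n-1,n-1}\widetilde u_{n-1,n-1}^2\geq 4\theta\delta_0^2|\nabla u|^4\mathcal F_\tau$ by Proposition~\ref{46}, and you are done. (Your displayed rearrangement has a stray $-S_{n-1,n-1}$; drop it.) The paper's Step~2 does essentially this, phrased as: $|u_{n-1,n-1}|\leq\delta_0|\nabla u|^2$ must hold (else contradiction as in \eqref{56}), and then $w_{n-1,n-1}\leq |u_{n-1,n-1}|+\tfrac12|\nabla u|^2+\delta_0^{10}|\nabla u|^2<(\tfrac12+2\delta_0)|\nabla u|^2$ directly. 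One minor correction: in your re-run of Lemma~\ref{68} with threshold $u_j^2\geq\delta_0|\nabla u|^2$, the bound on $|w_{jj}+\tfrac12|\nabla u|^2|$ is $C\delta_0^{19/2}|\nabla u|^2$ (divide \eqref{30} by $|u_j|\geq\sqrt{\delta_0}\,|\nabla u|$), not $C\delta_0^{9/2}$; this only strengthens your conclusion.
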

\begin{proof}
	\textbf{Step 1:} In this first step we show
	\begin{align}\label{58}
	w_{n-1,n-1} > \bigg(\frac{1}{2}-2\delta_0\bigg)|\nabla u|^2.
	\end{align}
Suppose for a contradiction that $w_{n-1,n-1} \leq (\frac{1}{2}-2\delta_0)|\nabla u|^2$, i.e.
	\begin{align}\label{59}
	u_{n-1,n-1} - u_{n-1}^2 - S_{n-1,n-1}  \leq  - 2\delta_0|\nabla u|^2. 
	\end{align}
	Either $u_{n-1}^2 < \delta_0|\nabla u|^2$ or $u_{n-1}^2 \geq \delta_0|\nabla u|^2$. In the former case, \eqref{59} then implies 
	\begin{align}
	u_{n-1,n-1} < -\delta_0|\nabla u|^2 + S_{n-1,n-1}  \stackrel{\eqref{5}}{<} -\delta_0|\nabla u|^2 + \delta_0^{10}|\nabla u|^2 < -\frac{1}{2}\delta_0|\nabla u|^2 \quad \text{if }\delta_0<\frac{1}{2},
	\end{align}
	and one obtains a contradiction as in \eqref{56} if $\widetilde{C}$ is sufficiently large -- note that Proposition \ref{46} is again justified, since $w_{n-1,n-1}$ is the second lowest eigenvalue. If instead $u_{n-1}^2 \geq \delta_0|\nabla u|^2$, the proof of Lemma \ref{68} shows that $n-1\in\mathcal{I}$. This contradicts the conclusion $|\mathcal{I}|=\{n\}$ of Lemma \ref{63} if $\widetilde{C}$ is sufficiently large. Thus \eqref{58} is established, which completes the proof of Step 1. \medskip 
	
	\noindent\textbf{Step 2:} In this second step we show 
	\begin{align}\label{64'}
	w_{n-1,n-1} < \bigg(\frac{1}{2}+2\delta_0\bigg)|\nabla u|^2.
	\end{align}
	Indeed, we have
	\begin{align*}
	w_{n-1,n-1} = u_{n-1,n-1} - u_{n-1}^2 + \frac{1}{2}|\nabla u|^2 - S_{n-1,n-1} \stackrel{\eqref{5}}{\leq} |u_{n-1,n-1}| + \frac{1}{2}|\nabla u|^2 + \delta_0^{10}|\nabla u|^2.
	\end{align*}
	But $|u_{n-1,n-1}| \leq \delta_0 |\nabla u|^2$, else one would obtain a contradiction as in \eqref{56} if $\widetilde{C}$ is sufficiently large (again we are using the fact $w_{n-1,n-1}$ is the second lowest eigenvalue, so Proposition \ref{46} applies). The estimate \eqref{64'} thus follows, which completes the proof of Step 2. \medskip
	
	With \eqref{58} and \eqref{64'} established, the proof of Lemma \ref{65} is complete. 
\end{proof}

\begin{proof}[Proof of Proposition \ref{303}]
	This is an immediate consequence of Lemmas \ref{68}, \ref{63} and \ref{65}. 
\end{proof}

\section{Proof of Theorem \ref{55}: the Dirichlet boundary value problem}\label{15}

As discussed in the introduction, in the proof of Theorem \ref{A'} we will first address the corresponding Dirichlet boundary value problem with finite boundary data. To this end, in this section we prove Theorem \ref{55}. Our proof uses the continuity method, and we proceed according to the following steps: 

\begin{enumerate}
	\item In Section \ref{146} we give a routine proof of the global upper bound on solutions for $\tau\leq 1$, independently of whether or not $\mu_\Gamma^+>1$. 
	\item In Section \ref{147} we prove the global lower bound on solutions for $\tau\leq 1$ when $\mu_\Gamma^+>1$. As outlined in the introduction, we use two main ingredients: our local interior gradient estimate obtained in Theorem \ref{40}, and a lower bound in a uniform neighbourhood of $\partial M$, which is obtained by constructing suitable comparison functions on small annuli (see Propositions \ref{41} and \ref{54}).
	\item In Section \ref{148} we prove the global gradient estimate for $\tau\leq 1$ when $\mu_\Gamma^+>1$. To obtain the lower bound for the normal derivative on $\partial M$ we use our comparison functions on small annuli constructed in Section \ref{147}, and to obtain the upper bound for the normal derivative on $\partial M$ we use comparison functions similar to that of Guan \cite{Guan08} (this latter argument does not use $\mu_\Gamma^+>1$). For the interior estimates we use Theorem \ref{40}, and for estimates near $\partial M$ we appeal to the proof of Theorem \ref{40}. 
	\item In Section \ref{149} we prove the global Hessian estimate for $\tau< 1$, following arguments of Guan \cite{Guan08}. These estimates apply independently of whether or not $\mu_\Gamma^+>1$.
	\item In Section \ref{150}, we complete the proof of Theorem \ref{55}: we first prove the existence of a unique smooth solution when $\tau<1$ using the continuity method, and we then obtain a Lipschitz viscosity solution in the case $\tau=1$ in the limit as $\tau\rightarrow 1$. 
\end{enumerate}

We point out that, in order to obtain a Lipschitz viscosity solution in the limit $\tau\rightarrow 1$ in Section \ref{150}, it is important that our \textit{a priori} $C^1$ estimates obtained in Sections \ref{146}--\ref{148} are uniform in $\tau\in[0,1]$. On the other hand, the global Hessian estimate in Section \ref{149} deteriorates as $\tau\rightarrow 1$; this is to be expected in view of the work in \cite{LN20b, LNX22}, where the non-existence of $C^2$ solutions is established for all Euclidean domains with disconnected smooth boundary when $\tau=1$.

\subsection{Upper bound}\label{146}

The global upper bound on solutions to \eqref{12} is routine and does not require the assumption $\mu_\Gamma^+>1$: 

\begin{prop}\label{75}
	Suppose $(f,\Gamma)$ satisfies \eqref{21'}--\eqref{24'} and let $\tau\leq 1$. Let $\psi\in C^\infty(M)$ be positive and $\xi\in C^\infty(\partial M)$. Then there exists a constant $C$ which is independent of $\tau$ but dependent on $g_0, f, \Gamma$, a lower bound for $\inf_M \psi $ and an upper bound for $\sup_{\partial M} \xi$ such that any $C^2$ solution to \eqref{12} satisfies $u\leq C$ on $M$. 
\end{prop}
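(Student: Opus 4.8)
The plan is to construct an explicit supersolution of the form $\bar u = c$ for a suitable constant $c$, and then invoke the comparison principle for the equation $f^\tau(\lambda(-g_{u}^{-1}A_{g_{u}})) = \psi$. The key computational input is the conformal transformation law \eqref{118}: for $g_u = e^{2u}g_0$ with $u$ constant, we have $\nabla_{g_0}^2 u = 0$, $du = 0$, so $A_{g_u} = A_{g_0}$ as a $(0,2)$-tensor, and hence $-g_u^{-1}A_{g_u} = -e^{-2u}g_0^{-1}A_{g_0}$. Thus $\lambda(-g_u^{-1}A_{g_u}) = e^{-2u}\lambda(-g_0^{-1}A_{g_0})$. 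The difficulty is that $\lambda(-g_0^{-1}A_{g_0})$ need not lie in $\Gamma$ (indeed, not assuming this is a central point of the paper), so a constant function by itself is \emph{not} admissible.

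First I would fix a constant $c_0$ depending only on $g_0$ such that the shifted tensor $-g_0^{-1}A_{g_0} + c_0\,\mathrm{Id}$ has all eigenvalues bounded below by, say, $1$ (possible by compactness of $M$ and continuity of $A_{g_0}$). The point is that adding a large positive multiple of the identity pushes the eigenvalue vector into $\Gamma_n^+ \subseteq \Gamma$. Next I would look for a supersolution of the form $\bar u = $ (a large constant) and observe that while $-g_{\bar u}^{-1}A_{g_{\bar u}} = e^{-2\bar u}(-g_0^{-1}A_{g_0})$ has the wrong sign in general, one can instead use a genuine supersolution built from a fixed auxiliary smooth function whose graph dominates: concretely, if $v_0 \in C^\infty(M)$ is \emph{any} function with $\lambda(-g_{v_0}^{-1}A_{g_{v_0}}) \in \Gamma$ on $M$ (for instance obtained by taking $v_0$ with large Hessian, using that $-\nabla^2_{g_0} v_0$ dominates in \eqref{118} when $v_0 = -N d^2$-type or $v_0 = N\varphi$ for suitable $\varphi$), then $\bar u = v_0 + t$ for $t$ large is a supersolution: by \eqref{118}, $A_{g_{\bar u}} = A_{g_{v_0}}$, so $\lambda(-g_{\bar u}^{-1}A_{g_{\bar u}}) = e^{-2t}\lambda(-g_{v_0}^{-1}A_{g_{v_0}}) \in \Gamma$ (cone property) and $f^\tau(\lambda(-g_{\bar u}^{-1}A_{g_{\bar u}})) = e^{-2t} f^\tau(\lambda(-g_{v_0}^{-1}A_{g_{v_0}})) \to 0$ uniformly as $t \to \infty$ — in particular it drops below $\inf_M \psi$, so $\bar u$ is a supersolution of \eqref{12}'s PDE. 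Choosing $t$ also large enough that $v_0 + t \geq \sup_{\partial M}\xi$ on $\partial M$, we get $\bar u \geq u$ on $\partial M$.

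Then I would apply the comparison principle for smooth admissible functions (standard for this class of concave, monotone, elliptic $f^\tau$ on $\Gamma^\tau$: if $u$ is a subsolution, $\bar u$ a supersolution, and $u \leq \bar u$ on $\partial M$, then $u \leq \bar u$ on $M$ — proved by examining an interior maximum of $u - \bar u$ and using ellipticity, exactly as in \cite{Guan08}). This yields $u \leq \bar u = v_0 + t \leq C$ on $M$, where $C = \sup_M v_0 + t$ depends only on $g_0, f, \Gamma$, the chosen $v_0$, and on $t$, which in turn was chosen in terms of a lower bound for $\inf_M \psi$ and an upper bound for $\sup_{\partial M}\xi$; crucially the whole construction is independent of $\tau \in [0,1]$ since $f^\tau(\frac12,\dots,\frac12) = 1$ pins down the normalization uniformly and the cone property $\mathbb{R}_{>0}\cdot\Gamma = \Gamma$ does not involve $\tau$. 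The main (minor) obstacle is simply exhibiting the fixed admissible function $v_0$ without circularity — but this is elementary: take $v_0 = -A\phi$ where $\phi \geq 0$ is smooth with large Hessian in the interior, or more simply note that for $v_0 = \frac{1}{2}|x|^2$-type local model summed via a partition of unity one makes $-\nabla_{g_0}^2 v_0$ as negative definite as desired, hence $A_{g_{v_0}}$ as negative definite as desired, hence $-g_{v_0}^{-1}A_{g_{v_0}}$ as positive definite as desired; this places its eigenvalues in $\Gamma_n^+ \subseteq \Gamma$. No use of $\mu_\Gamma^+ > 1$ is needed anywhere, consistent with the statement.
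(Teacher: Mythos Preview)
Your approach has a genuine gap: the construction of the admissible auxiliary function $v_0$ is not elementary, and the specific suggestions you give do not work. Under the scaling $v_0 \mapsto N v_0$, the Hessian term $\nabla_{g_0}^2 v_0$ in \eqref{118} grows linearly in $N$, but the gradient term $dv_0\otimes dv_0 - \tfrac12|\nabla_{g_0} v_0|^2 g_0$ grows quadratically. In the direction of $\nabla v_0$ this gradient combination contributes $+\tfrac12|\nabla v_0|^2$ to $A_{g_{v_0}}$, so for large $N$ it defeats the Hessian term rather than being dominated by it; your ``$|x|^2$-type local model'' already fails on the unit ball in $\mathbb{R}^n$ away from the origin. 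More fundamentally, the existence of a conformal metric with $\lambda(-g^{-1}A_g)\in\Gamma$ on all of $M$ is precisely what the paper emphasises is a \emph{consequence} of Theorem~\ref{55} rather than an input (see the abstract and the remark after Theorem~\ref{A}). Since Theorem~\ref{55} in turn uses Proposition~\ref{75}, invoking such a $v_0$ here would be circular.

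The paper's argument is far more direct and sidesteps this issue entirely. One simply examines the maximum point $x_0$ of $u$. If $x_0\in\partial M$ the boundary data give the bound. If $x_0$ is interior, then $du(x_0)=0$ and $\nabla_{g_0}^2 u(x_0)\le 0$, so from \eqref{118} one gets $-g_0^{-1}A_{g_u}(x_0)\le -g_0^{-1}A_{g_0}(x_0)$ in the sense of symmetric matrices. Since the solution itself is admissible, monotonicity and homogeneity of $f^\tau$ yield
\[
\psi(x_0)e^{2u(x_0)} = f^\tau(-g_0^{-1}A_{g_u})(x_0) \le f^\tau(-g_0^{-1}A_{g_0})(x_0),
\]
which bounds $u(x_0)$ by a quantity depending only on $g_0,f,\Gamma$ and $\inf_M\psi$ (and uniformly in $\tau$ via the normalisation \eqref{25'}). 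No auxiliary $v_0$ is needed: admissibility of $-g_0^{-1}A_{g_0}$ is forced \emph{at the single point} $x_0$ by the equation, not assumed globally.
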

\begin{proof}
	Suppose the maximum of $u$ occurs at $x_0\in M$. If $x_0\in\partial M$, then $u(x_0)\leq \xi(x_0)$. If $x_0\in M\backslash \partial M$, then $\nabla_{g_0}^2 u(x_0) \leq 0$ and $du(x_0)=0$, and hence
	\begin{equation*}
	\psi(x_0)e^{2u(x_0)} \leq f^\tau(-g_0^{-1}A_{g_0})(x_0),
	\end{equation*}
	which yields $u(x_0) \leq \frac{1}{2}\ln\big(\frac{f^\tau(-g_0^{-1}A_{g_0})}{\psi}\big)(x_0)$. 
\end{proof}

\subsection{Lower bound}\label{147}

In this section we obtain the global lower bound on solutions to \eqref{12}:
\begin{prop}\label{151}
	Suppose $(f,\Gamma)$ satisfies \eqref{21'}--\eqref{24'} and \eqref{418}, and let $\tau\leq 1$. Let $\psi\in C^\infty(M)$ be positive and $\xi\in C^\infty(\partial M)$. Then there exists a constant $C$ which is independent of $\tau$ but dependent on $g_0, f, \Gamma$, an upper bound for $\|\psi\|_{C^1(M)}$ and a lower bound for $\inf_{\partial M}\xi$ such that any $C^3$ solution to \eqref{12} satisfies $u\geq C$ on $M$. 
\end{prop}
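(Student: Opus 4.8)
The proof I have in mind simply assembles the two independent ingredients advertised in the introduction. Throughout, $C$ denotes a constant depending on $g_0,f,\Gamma$, an upper bound for $\|\psi\|_{C^1(M)}$ and a lower bound for $\inf_{\partial M}\xi$, but \emph{never on $\tau$}; its value may change from line to line.

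\textbf{Step 1: lower bound near $\partial M$.} First I would produce a uniform $\rho_0\in(0,1)$ and a constant $C$, both independent of $\tau$, such that every $C^3$ solution of \eqref{12} satisfies $u\geq -C$ on $\mathcal{N}\defeq\{x\in M:\operatorname{d}(x,\partial M)<\rho_0\}$. This is the content of Proposition \ref{41} below, and is where the hypothesis $\mu_\Gamma^+>1$ enters (cf.\ Remark \ref{182}). The mechanism: extend $g_0$ across a collar $N$ glued to $\partial M$, and for each $q\in\partial M$ take a centre $p\in N$ with $\operatorname{d}_{g_0}(p,\partial M)=r_-$ realised at $q$, so that the geodesic sphere $\mathbb{S}_{r_-}(p)$ of radius $r_-$ about $p$ is tangent to $\partial M$ at $q$ and $B_{r_-}(p)\cap M=\emptyset$. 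Choosing $1<r_+/r_-<S$ and $r_+<R$ as permitted by Proposition \ref{54'} (in the sharpened form Proposition \ref{54}, applied with $m=\inf_{\partial M}\xi$ and right-hand side $\sup_M\psi$), one obtains a radial function $w$, decreasing in $\operatorname{d}_{g_0}(\cdot,p)$, with $\lambda(-g_w^{-1}A_{g_w})\in\Gamma$, $f(\lambda(-g_w^{-1}A_{g_w}))\geq\sup_M\psi$, $w=\inf_{\partial M}\xi$ on $\mathbb{S}_{r_-}(p)$ and $w\to-\infty$ at $\mathbb{S}_{r_+}(p)$. Since $\Gamma\subseteq\Gamma^\tau$ and $f^\tau\geq f$ on $\Gamma$ for every $\tau\leq 1$ (a consequence of concavity and $1$-homogeneity together with $\sigma_1(\lambda)\geq\tfrac n2 f(\lambda)$ on $\Gamma$), such a $w$ is automatically a subsolution of \eqref{12}. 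On $\partial\big(A_{r_-,r_+}(p)\cap M\big)$ one has $w\leq u$: on the part lying in $\mathbb{S}_{r_+}(p)$ because $w\to-\infty$, and on the part lying in $\partial M$ because there $\operatorname{d}_{g_0}(\cdot,p)\geq r_-$, hence $w\leq\inf_{\partial M}\xi\leq\xi=u$. The comparison principle for \eqref{12} then gives $u\geq w$ on $A_{r_-,r_+}(p)\cap M$; letting $q$ (hence $p$) vary over $\partial M$ covers $\mathcal{N}$ and yields $u\geq -C$ there, with $\rho_0,C$ depending only on the allowed data.

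\textbf{Step 2: propagation to the interior.} By Proposition \ref{75} there is $C$, independent of $\tau$, with $u\leq C$ on all of $M$. Set $K\defeq\{x\in M:\operatorname{d}(x,\partial M)\geq\rho_0/4\}$, a compact subset of $M\setminus\partial M$; covering $K$ by geodesic balls of a fixed radius $r_0\sim\rho_0$ contained in $M\setminus\partial M$ and applying Theorem \ref{40} (with $\psi$ in place of $1$) gives
\[
|\nabla_{g_0} u|_{g_0}\leq C\big(r_0^{-1}+e^{\sup_M u}\big)\leq C\qquad\text{on }K.
\]
The crucial point, built into Theorem \ref{40}, is that this bound uses \emph{only} the upper bound on $u$ and is uniform in $\tau\in[0,1]$. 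Now fix $x\in M$. If $\operatorname{d}(x,\partial M)<\rho_0$ then $x\in\mathcal{N}$ and $u(x)\geq -C$ by Step 1. Otherwise let $\gamma:[0,\ell]\to M$ be a minimising $g_0$-geodesic from $x$ to $\partial M$ parametrised by arclength, so $\ell\leq\operatorname{diam}_{g_0}(M)$ and $\operatorname{d}(\gamma(t),\partial M)=\ell-t$; at $t_\ast=\ell-\rho_0/2$ one has $\gamma(t_\ast)\in\mathcal{N}$ and $\gamma([0,t_\ast])\subset K$. Integrating the gradient bound along $\gamma$,
\[
u(x)\;\geq\;u(\gamma(t_\ast))-\int_0^{t_\ast}|\nabla_{g_0} u|_{g_0}(\gamma(t))\,dt\;\geq\;-C-C\operatorname{diam}_{g_0}(M)\;\geq\;-C.
\]
Since $C$ is independent of $\tau$, this proves the proposition and the $\tau$-uniform bound of the accompanying remark. (If $M$ is disconnected one argues componentwise, discarding any closed component, where the assertion is vacuous, or assumes as usual that $\partial M$ meets every component.)

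\textbf{Main obstacle.} The analytic substance is entirely in Step 1 — constructing an admissible conformal metric on a thin geodesic annulus that degenerates to $-\infty$ at the outer sphere while keeping $\lambda(-g_w^{-1}A_{g_w})$ in the \emph{negative} cone $\Gamma$. This is genuinely impossible when $\mu_\Gamma^+\leq 1$ (Remark \ref{182}), so $\mu_\Gamma^+>1$ is essential, entering through the ODE analysis underlying Proposition \ref{54'} (modelled on Chang--Han--Yang). Given that construction together with the one-sided interior gradient estimate of Theorem \ref{40} and the trivial upper bound of Proposition \ref{75}, Step 2 is soft. Two points I would be careful about in the write-up: that the annular barrier really is a subsolution of the $(f^\tau,\Gamma^\tau,\psi)$-problem for \emph{all} $\tau\leq 1$ (via $\Gamma\subseteq\Gamma^\tau$ and $f^\tau\geq f$ on $\Gamma$), and that $\rho_0$, the interior gradient bound, and hence the final constant are genuinely independent of $\tau$.
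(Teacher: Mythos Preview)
Your proof is correct and takes essentially the same approach as the paper: a lower bound near $\partial M$ via the annular barriers of Proposition \ref{41}, then propagation inward using the one-sided interior gradient estimate of Theorem \ref{40} combined with the upper bound from Proposition \ref{75}. Your explicit remark that $\Gamma\subseteq\Gamma^\tau$ and $f^\tau\geq f$ on $\Gamma$ (so the $(f,\Gamma)$-barrier is automatically a subsolution for the $(f^\tau,\Gamma^\tau,\psi)$-problem) is a detail the paper leaves implicit in the proof of Proposition \ref{41}, though the inequality $f^\tau\geq f$ is verified later in Section \ref{166}.
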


 There are two main ingredients in our proof of Proposition \ref{151}: our local interior gradient estimate from Theorem \ref{40}, and a lower bound in a uniform neighbourhood of $\partial M$; the assumption $\mu_\Gamma^+>1$ plays a role at both stages. As pointed out before, a delicate point is that we do not assume that the background metric satisfies $\lambda(-g_0^{-1}A_{g_0})\in\Gamma$ on $M$ -- if such an assumption is made, then the proof of the lower bound is as straightforward as the proof of Proposition \ref{75}. In our case, the global lower bound requires more work and is one of the key steps in this paper.

To state our result concerning the lower bound near $\partial M$, for $\delta>0$ we denote 
\begin{align*}
M_\delta = \{x\in M:\operatorname{d}(x,\partial M)<\delta\},
\end{align*}
where $\operatorname{d}(x,\partial M)$ is the distance from $x$ to $\partial M$ with respect to $g_0$. It is well-known that for $\delta>0$ sufficiently small, $M_\delta$ is a tubular neighbourhood of $\partial M$. We show:

\begin{prop}\label{41}
	Under the same hypotheses as Proposition \ref{151}, there exists a constant $\delta>0$ which is independent of $\tau$ but dependent on $g_0, f, \Gamma$, an upper bound for $\sup_{M}\psi$ and a lower bound for $\inf_{\partial M}\xi$ such that any $C^3$ solution $u$ to \eqref{12} satisfies $u\geq \inf_{\partial M}\xi - 1$ in $M_{\delta}$. 
\end{prop}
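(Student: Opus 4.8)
The plan is to prove Proposition \ref{41} by a barrier argument, using the comparison functions $w$ constructed in Proposition \ref{54'} (precisely, Proposition \ref{54}) on a suitable covering of $M_\delta$ by small annuli. First I would fix a collar neighbourhood: since $\partial M$ is smooth and compact, I can extend $(M,g_0)$ slightly to a smooth Riemannian manifold $(\widetilde M,g_0)$ containing $M$, by attaching an external collar $N\cong \partial M\times[0,\delta_0)$ to $\partial M$. The point of attaching $N$ is that for every $p\in\partial M$ there is a point $c_p\in N$ with $\operatorname{d}_{g_0}(c_p,\partial M)$ small, such that the geodesic sphere $\mathbb{S}_{r_-}(c_p)$ is internally tangent to $\partial M$ at $p$, and $M$ lies (locally near $p$) on the far side of this sphere. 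Concretely, choosing $r_- = r_-(p)$ comparable to a fixed small radius and $c_p$ on the inward normal geodesic from $p$ into $N$, one gets that a thin annular region $A_{r_-,r_+}(c_p) = \{r_- < \operatorname{d}_{g_0}(\cdot,c_p) < r_+\}$ contains a neighbourhood of $p$ in $M$, with the inner sphere touching $\partial M$ and the outer sphere $\mathbb{S}_{r_+}(c_p)$ lying in $M\backslash\partial M$ (after shrinking the ratio $r_+/r_-$ enough).

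Next I would apply Proposition \ref{54'} with $m \defeq \inf_{\partial M}\xi - 1$ and with the metric $g_0$ restricted near $c_p$: it provides constants $S>1$ and $0<R<1$, depending only on $g_0, f,\Gamma$ and $m$ (hence, after using compactness of $\partial M$ to get a uniform local chart picture, depending only on the stated quantities and in particular independent of $\tau$), such that whenever $1 < r_+/r_- < S$ and $r_+ < R$ there is a function $w = w_p$ on $A_{r_-,r_+}(c_p)$ with $f(\lambda(-g_w^{-1}A_{g_w})) \geq 1$, $\lambda(-g_w^{-1}A_{g_w}) \in \Gamma \subseteq \Gamma^\tau$, with $w_p = m$ on $\mathbb{S}_{r_-}(c_p)$ and $w_p \to -\infty$ at $\mathbb{S}_{r_+}(c_p)$. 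Choosing the radii uniformly small (possible by compactness), I get a finite or locally finite family of such annuli covering a uniform neighbourhood $M_\delta$ of $\partial M$ in $M$.

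The core of the argument is then the comparison principle on each annulus. Fix $p$ and let $u$ be a $C^3$ solution to \eqref{12}. On $A_{r_-,r_+}(c_p)\cap M$ both $u$ and $w_p$ are admissible for the $\Gamma^\tau$-problem, with $f^\tau(\lambda(-g_u^{-1}A_{g_u})) = \psi \leq \sup_M\psi$ and — after arranging in Proposition \ref{54'} that $w_p$ is in fact a genuine supersolution, i.e. $f^\tau(\lambda(-g_{w_p}^{-1}A_{g_{w_p}})) \geq \sup_M \psi$ on the annulus, which one gets from the inequality $f(\cdots)\geq 1$ by taking $r_+, r_+/r_-$ small enough that the curvature of $g_{w_p}$ is large — I may compare. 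On the boundary of the region $A_{r_-,r_+}(c_p)\cap M\backslash\partial M$: along $\mathbb{S}_{r_-}(c_p)\cap M$ we have $w_p = m \leq u$ since $u \geq \inf_{\partial M}\xi > m$ there if the sphere meets $\partial M$ only at $p$, and more carefully one checks $w_p \leq u$ on the relevant part of $\partial M$ using that $u = \xi \geq m+1 > m$; along the inner sphere away from $\partial M$ one also needs $w_p \leq u$, which follows because $w_p = m$ there while $u$ is bounded below on that compact set — wait, that is circular, so in fact the comparison must be set up on the region bounded by $\partial M$, the inner sphere, and the outer sphere, and the key is that $w_p \to -\infty$ at the outer sphere so $w_p < u$ there automatically, $w_p = m \le \xi = u$ on $\partial M$, leaving only the portion of $\mathbb{S}_{r_-}$ interior to $M$ — which I avoid by choosing $c_p$ so that the inner sphere $\mathbb{S}_{r_-}(c_p)$ meets $\overline M$ only along $\partial M$ (tangency), so that region is exactly $\{$between $\partial M$ and $\mathbb{S}_{r_+}\}$. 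The comparison principle for admissible solutions of $f^\tau(\cdots)=\psi$ vs. supersolutions (standard, e.g. via the ellipticity and the fact that $u - w_p$ attains an interior max where we can touch with the supersolution) then gives $u \geq w_p$ throughout, hence $u(x) \geq w_p(x) \geq m$ wherever $w_p$ is finite and in particular on the slightly thinner annulus staying away from $\mathbb{S}_{r_+}$; intersecting over the covering yields $u \geq m = \inf_{\partial M}\xi - 1$ on $M_\delta$.

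The main obstacle I anticipate is the geometric bookkeeping around the inner sphere and $\partial M$: one must choose the centres $c_p$, the inner radii $r_-$ and the ratios $r_+/r_-$ in a genuinely uniform way (independent of $p$ and of $\tau$) so that (i) the annuli $A_{r_-,r_+}(c_p)$ genuinely cover a full one-sided neighbourhood $M_\delta$ of $\partial M$, (ii) each inner sphere is internally tangent to $\partial M$ and otherwise disjoint from $M$, so that the comparison domain has the clean boundary structure above, and (iii) the hypotheses $r_+/r_- < S$, $r_+ < R$ of Proposition \ref{54'} hold simultaneously with the supersolution-strengthening $f^\tau(\lambda(-g_{w_p}^{-1}A_{g_{w_p}})) \geq \sup_M\psi$. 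Since $\partial M$ is compact and smooth and the metric is controlled, all these can be arranged by taking $\delta$ small; making this precise — likely by working in Fermi coordinates along $\partial M$ and quoting Proposition \ref{54}'s more precise quantitative version — is the technical heart, but it is a controlled compactness argument rather than a new idea.
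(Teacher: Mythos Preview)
Your strategy is exactly the paper's: attach an exterior collar, centre small geodesic annuli at points in the collar with inner radius equal to the distance to $\partial M$, invoke the barrier $w$ from Proposition~\ref{54}, and apply the comparison principle (Proposition~\ref{174}) on each $A_{r_-,r_+}\cap M$. Two points deserve correction. First, your final inequality $w_p(x)\geq m$ is backwards: the barrier $w(r)=(\beta+\ep)\ln\!\big(\frac{r_+-r}{r_+-r_-}\big)+m$ is radially \emph{decreasing}, so $w_p\leq m$ throughout the annulus. The right way to finish (and what the paper does) is to observe that on the inner half-annulus $\{r_-\leq r\leq \tfrac{1}{2}(r_-+r_+)\}$ one has $w_p\geq m-(\beta+\ep)\ln 2$, which gives a uniform lower bound of the form $u\geq \inf_{\partial M}\xi - C(\Gamma)$ on the union of these half-annuli; arranging the covering so that these half-annuli already cover a tubular neighbourhood $M_\delta$ then yields the statement. (With this fix, taking $m=\inf_{\partial M}\xi$ as the paper does, rather than $m=\inf_{\partial M}\xi-1$, is slightly cleaner for the boundary comparison.)

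Second, your tangency requirement on the inner sphere is stronger than needed and harder to enforce uniformly. The paper simply takes $r_-^i=\operatorname{d}_{g_0}(x_i,\partial M)$ with $x_i\in N$; since any path from $x_i$ to a point of $M\setminus\partial M$ must cross $\partial M$, one gets $\overline{B_{r_-^i}(x_i)}\cap(M\setminus\partial M)=\emptyset$ automatically, so $\mathbb{S}_{r_-^i}(x_i)\cap M\subseteq\partial M$ (possibly more than one point). On that set $u=\xi\geq\inf_{\partial M}\xi=m=w_i$, and together with $w_i=-\infty$ on $\mathbb{S}_{r_+^i}$ and $w_i\leq m\leq\xi=u$ on $A\cap\partial M$ (again by monotonicity of $w_i$), the boundary comparison is complete without any tangency bookkeeping.
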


Assuming the validity of Proposition \ref{41} for now, we give the proof of Proposition \ref{151}:

\begin{proof}[Proof of Proposition \ref{151}]
	Let $\delta>0$ be as in the statement of Proposition \ref{41}, so that $u$ satisfies the lower bound $u\geq \inf_{\partial M}\xi - 1$ in $M_{\delta}$. It follows that
	\begin{align}\label{163}
	u \geq \inf_{\partial M}\xi - 1 - \operatorname{diam}(M,g_0)\sup_{M\backslash M_\delta}|\nabla_{g_0} u|_{g_0} \quad \text{in }M. 
	\end{align}
	On the other hand, by Theorem \ref{40} and the uniform upper bound for $u$ obtained in Proposition \ref{75}, we have
	\begin{align}\label{162}
	|\nabla_{g_0} u|_{g_0} \leq C(\delta^{-1}+1) \quad \text{in }M\backslash M_\delta.
	\end{align}
	Substituting \eqref{162} into \eqref{163}, the proof of Proposition \ref{151} is complete. 
\end{proof}

Roughly speaking, to prove Proposition \ref{41} we cover a neighbourhood of $\partial M$ by small annuli on which we construct suitable comparison functions. The construction of such comparison functions is given in the following proposition (which is a more precise version of Proposition \ref{54'} stated in the introduction). For a Riemannian metric $g_0$ defined on a neighbourhood of the origin in $\mathbb{R}^n$, let $r(x) = \operatorname{d}_{g_0}(0,x)$, let $\mathbb{S}_r = \partial \mathbb{B}_r$ denote the geodesic sphere of radius $r$ centred at the origin, and denote by $A_{r_1, r_2}$ the annulus $\mathbb{B}_{r_2}\backslash\overline{\mathbb{B}}_{r_1}$. We also denote 
\begin{align*}
\beta = \frac{2}{\mu_\Gamma^+ - 1},
\end{align*}
and recall the convention $g_w = e^{2w}g_0$.

\begin{prop}\label{54}
	Suppose $(f,\Gamma)$ satisfies \eqref{21'}--\eqref{24'} and \eqref{418}, let $g_0$ be a Riemannian metric defined on a neighbourhood $\Omega$ of the origin in $\mathbb{R}^n$, and fix a constant $\ep>0$. Then there exists a constant $C>1$ depending only on $g_0, f$ and $\Gamma$, and a constant $0<R<1$ depending additionally on $\ep$, such that for each $m\in\mathbb{R}$, 
	\begin{align}\label{129}
	w(r) \defeq (\beta+\ep)\ln\bigg(\frac{r_+ - r}{r_+ - r_-}\bigg)  +m 
	\end{align}
	satisfies
	\begin{align}\label{76}
	\begin{cases}
	f(\lambda(-g_w^{-1}A_{g_w})) \geq \frac{f(-\mu_\Gamma^+ + C^{-1}\ep,\,1,\, \dots\,,\,1)}{Ce^{2m}(r_+-r_-)^2}>0, \quad \lambda(-g_w^{-1}A_{g_w})\in\Gamma &  \text{on }A_{r_-, r_+} \\
	w(x) = m & \text{for }x\in\mathbb{S}_{r_-} \\
	w(x)\rightarrow -\infty & \text{as }d(x,\mathbb{S}_{r_+})\rightarrow 0
	\end{cases}
	\end{align}
	whenever $1<\frac{r_+}{r_-}<1+\frac{\ep}{2(\beta+2)}$ and $r_+<R$.
\end{prop}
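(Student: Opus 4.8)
The plan is to reduce the computation to a one-variable ODE inequality in the geodesic radial variable $r$, exploiting the explicit form of the comparison function $w(r)$. First I would set up geodesic polar coordinates centred at the origin for $g_0$, writing $g_0 = dr^2 + h_r$ where $h_r$ is the induced metric on the geodesic sphere $\mathbb{S}_r$; the second fundamental form of $\mathbb{S}_r$ is $\frac{1}{2}\partial_r h_r = \frac{1}{r}h_r + O(1)$ as $r\to 0$, which encodes the curvature contribution of $g_0$ near the origin. Since $w$ depends only on $r$, the conformal transformation law \eqref{118} shows that $-g_w^{-1}A_{g_w}$ has, in an orthonormal frame adapted to the splitting $\partial_r \oplus T\mathbb{S}_r$, a block-diagonal structure: one eigenvalue in the radial direction coming from $-w'' + \tfrac12(w')^2 \cdot(\text{sign adjustments}) - $ (terms from $A_{g_0}$), and $(n-1)$ nearly-equal eigenvalues in the spherical directions coming from $-\tfrac{w'}{r} - \tfrac12 (w')^2 + \cdots$. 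The key algebraic point is that with $w(r) = (\beta+\ep)\ln\big(\tfrac{r_+-r}{r_+-r_-}\big)+m$, one has $w' = -\tfrac{\beta+\ep}{r_+-r}$, so $(w')^2$ blows up like $(r_+-r)^{-2}$ near $\mathbb{S}_{r_+}$ while $w''$ blows up at the same rate; the dominant balance of $-w''$ against $\tfrac12(w')^2$ and $-\tfrac{w'}{r}$ against $\tfrac12 (w')^2$ is exactly what produces, to leading order, the eigenvalue vector proportional to $(-\mu_\Gamma^+, 1,\dots,1)$ up to the $\ep$-perturbation. This is where the definition $\beta = \tfrac{2}{\mu_\Gamma^+-1}$ and the hypothesis $\mu_\Gamma^+>1$ enter: they are precisely what makes $(-\mu_\Gamma^+ + C^{-1}\ep, 1,\dots,1)\in\Gamma$ with $f$ of it positive, using that $(-\mu_\Gamma^+,1,\dots,1)\in\partial\Gamma$ and $f_{\lambda_i}>0$.

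The main steps, in order: (1) compute $-g_w^{-1}A_{g_w}$ explicitly in geodesic polars, isolating the leading terms in $w', w''$ and collecting the lower-order terms (those involving $A_{g_0}$, the deviation of $\partial_r h_r$ from $\tfrac2r h_r$, and cross terms) into an error of size $O(1) + O(|w'|) + O(e^{2m}\cdot\text{bounded})$ — crucially all bounded by a fixed constant times $\tfrac{1}{(r_+-r_-)^2}$ after multiplying through by $e^{2w}$ and using $r_+ < R$ small; (2) show that the leading eigenvalue vector is $\tfrac{(\beta+\ep)}{(r_+-r)^2}\big(-1 - \tfrac{1}{2}(\beta+\ep) + o(1),\ 1 - \tfrac12(\beta+\ep)\tfrac{r_+-r}{r} + \cdots,\ \dots\big)$ — I would factor out the positive scalar $\tfrac{\beta+\ep}{2}\cdot\tfrac{1}{(r_+-r)^2}$ (roughly) and check that the resulting vector is a small perturbation of $(-\mu_\Gamma^+, 1,\dots,1)$ once $\tfrac{r_+}{r_-} - 1 < \tfrac{\ep}{2(\beta+2)}$ controls the spread $\tfrac{r_+-r}{r}$ over the annulus and $R$ is small enough to absorb the $A_{g_0}$ and metric-error terms; (3) invoke openness of $\Gamma$, continuity and positivity of $f$ together with $f_{\lambda_i}>0$ to conclude $\lambda(-g_w^{-1}A_{g_w})\in\Gamma$ and $f(\lambda(-g_w^{-1}A_{g_w}))\ge \tfrac{f(-\mu_\Gamma^+ + C^{-1}\ep,1,\dots,1)}{Ce^{2m}(r_+-r_-)^2}$ after accounting for the $e^{-2w} \le e^{-2m}$ scaling and $r \ge r_-$ (so $(r_+-r) \le r_+ - r_-$, giving the lower bound on the whole annulus, with the worst case near $\mathbb{S}_{r_-}$); (4) verify the boundary conditions: $w(r_-) = m$ is immediate, and $w(r)\to-\infty$ as $r\to r_+$ is immediate from the logarithm. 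The choice of $C$ (depending only on $g_0,f,\Gamma$) comes from the curvature bounds of $g_0$ on a fixed neighbourhood of the origin, and the choice of $R$ (depending also on $\ep$) is made at the end of step (2) to make all error terms smaller than, say, $\tfrac{1}{C}\ep$ relative to the leading term.

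The hard part will be step (2): carefully tracking the spherical-direction eigenvalues and showing their deviation from $1$ (after the overall rescaling) is genuinely $O(\ep)$ and not merely $O(1)$. The subtlety is that the term $-\tfrac{w'}{r} = \tfrac{\beta+\ep}{r(r_+-r)}$ and the term $\tfrac12(w')^2 = \tfrac{(\beta+\ep)^2}{2(r_+-r)^2}$ must be compared, and near $\mathbb{S}_{r_+}$ the ratio $\tfrac{r_+-r}{r}$ is small but near $\mathbb{S}_{r_-}$ it is comparable to $\tfrac{r_+-r_-}{r_-}$; the hypothesis $\tfrac{r_+}{r_-} < 1 + \tfrac{\ep}{2(\beta+2)}$ is exactly calibrated so this ratio is $\le \tfrac{\ep}{2(\beta+2)}$ throughout the annulus, which after multiplication by $\beta+\ep$ and division by the leading $\tfrac{(\beta+\ep)^2}{2}$ yields a perturbation of order $\ep$. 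Getting the constants to line up so that one can legitimately write the leading vector as $(-\mu_\Gamma^+ + C^{-1}\ep, 1,\dots,1)$ with the same $C$ in the denominator of \eqref{76} is the delicate bookkeeping; the geometric input (geodesic polar coordinates, curvature of $g_0$ bounded on a fixed ball) and the algebraic input ($(-\mu_\Gamma^+,1,\dots,1)\in\partial\Gamma$, monotonicity of $f$) are both standard, and the model case is the explicit radial solution of Chang–Han–Yang on flat annuli, which this construction perturbs.
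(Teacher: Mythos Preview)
Your proposal is correct and follows essentially the same route as the paper: compute the Schouten tensor of the radial conformal factor, split into one radial and $n-1$ spherical eigenvalues, and show that after factoring out a positive scalar the eigenvalue vector is a perturbation of $(-\mu_\Gamma^+,1,\dots,1)$ of size $c\ep$, with the hypothesis $\tfrac{r_+}{r_-}<1+\tfrac{\ep}{2(\beta+2)}$ controlling $\tfrac{r_+-r}{r}$ uniformly on the annulus and the smallness of $R$ absorbing the $A_{g_0}$ and metric error terms. The only cosmetic difference is that the paper first substitutes $v=e^{-w}$ (so $g_w=v^{-2}g_0$) to simplify the transformation law and works in normal coordinates rather than geodesic polars, then packages the conclusion into two claims --- a lower bound on the positive prefactor $-\lambda v^2-\eta$ and a lower bound $-\mu_\Gamma^+ + c\ep$ on the ratio $\tfrac{\chi v^2}{\lambda v^2+\eta}+1$ --- which is exactly your steps (2)--(3); note also that your inequality $e^{-2w}\le e^{-2m}$ should read $\ge$, consistent with your correct identification of $\mathbb{S}_{r_-}$ as the worst case.
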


\begin{rmk}
	Our choice of $w$ in \eqref{129} is motivated by the work of Chang, Han \& Yang \cite{CHY05} on radial solutions to the $\sigma_k$-Yamabe equation on annular domains in $\mathbb{R}^n$. Indeed, when $\ep=0$ and $\mu_\Gamma^+ = \frac{n-k}{k}$, \eqref{51} corresponds to the leading order term in the solution to the $\sigma_k$-Yamabe equation in $\Gamma_k^-$ on annular domains in $\mathbb{R}^n$ for $k<\frac{n}{2}$. 
\end{rmk}

\begin{rmk}
	We reiterate that Proposition \ref{54} relies crucially on the assumption $\mu_\Gamma^+>1$, and that a similar construction is not possible when $\mu_\Gamma ^+ \leq 1$ -- see Remark \ref{182} in the introduction. 
\end{rmk}

Assuming the validity of Proposition \ref{54} for now, we first give the proof of Proposition \ref{41} -- the reader may wish to refer to Figure \ref{145} in the following argument:

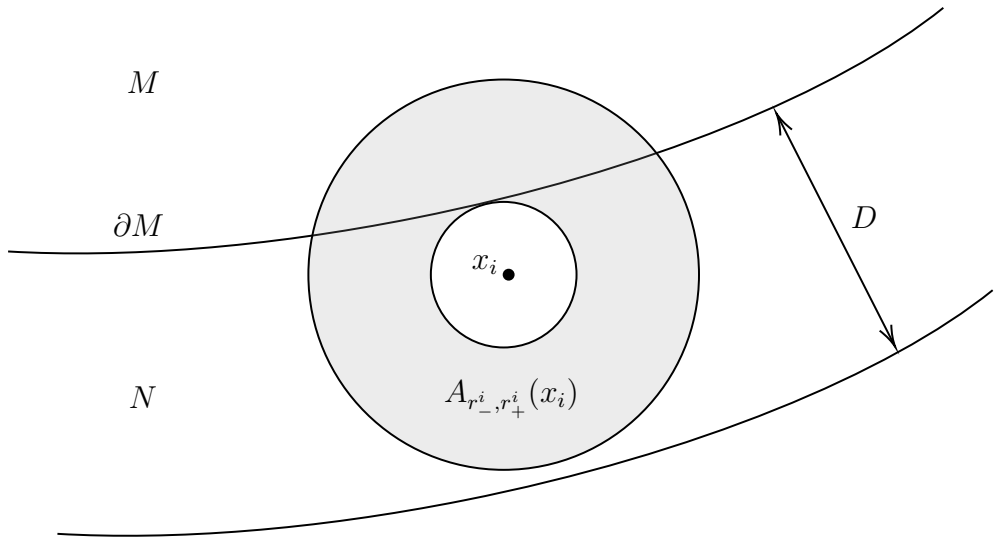
\begin{figure}
	\tikzset{every picture/.style={line width=0.75pt}} 
	
	\begin{tikzpicture}[x=0.75pt,y=0.75pt,yscale=-1,xscale=1]
	\path (0,300); 
	
	\draw  [draw opacity=0] (533.23,29.8) .. controls (480.64,72.17) and (382.66,112.78) .. (266.62,135.88) .. controls (191.19,150.9) and (119.97,156.16) .. (61.55,152.78) -- (244.42,24.36) -- cycle ; \draw   (533.23,29.8) .. controls (480.64,72.17) and (382.66,112.78) .. (266.62,135.88) .. controls (191.19,150.9) and (119.97,156.16) .. (61.55,152.78) ;  
	\draw  [draw opacity=0] (558.11,172.32) .. controls (505.51,214.69) and (407.53,255.3) .. (291.5,278.4) .. controls (216.06,293.42) and (144.84,298.68) .. (86.43,295.3) -- (269.29,166.88) -- cycle ; \draw   (558.11,172.32) .. controls (505.51,214.69) and (407.53,255.3) .. (291.5,278.4) .. controls (216.06,293.42) and (144.84,298.68) .. (86.43,295.3) ;  
	\draw  [fill={rgb, 255:red, 155; green, 155; blue, 155 }  ,fill opacity=0.19 ,even odd rule] (274.76,164.5) .. controls (274.76,144.21) and (291.21,127.76) .. (311.5,127.76) .. controls (331.79,127.76) and (348.24,144.21) .. (348.24,164.5) .. controls (348.24,184.79) and (331.79,201.24) .. (311.5,201.24) .. controls (291.21,201.24) and (274.76,184.79) .. (274.76,164.5)(213,164.5) .. controls (213,110.1) and (257.1,66) .. (311.5,66) .. controls (365.9,66) and (410,110.1) .. (410,164.5) .. controls (410,218.9) and (365.9,263) .. (311.5,263) .. controls (257.1,263) and (213,218.9) .. (213,164.5) ;
	\draw  [fill={rgb, 255:red, 0; green, 0; blue, 0 }  ,fill opacity=1 ] (311.5,164.5) .. controls (311.5,163.12) and (312.62,162) .. (314,162) .. controls (315.38,162) and (316.5,163.12) .. (316.5,164.5) .. controls (316.5,165.88) and (315.38,167) .. (314,167) .. controls (312.62,167) and (311.5,165.88) .. (311.5,164.5) -- cycle ;
	\draw    (449.91,84.78) -- (508.09,199.22) ;
	\draw [shift={(509,201)}, rotate = 243.05] [color={rgb, 255:red, 0; green, 0; blue, 0 }  ][line width=0.75]    (10.93,-3.29) .. controls (6.95,-1.4) and (3.31,-0.3) .. (0,0) .. controls (3.31,0.3) and (6.95,1.4) .. (10.93,3.29)   ;
	\draw [shift={(449,83)}, rotate = 63.05] [color={rgb, 255:red, 0; green, 0; blue, 0 }  ][line width=0.75]    (10.93,-3.29) .. controls (6.95,-1.4) and (3.31,-0.3) .. (0,0) .. controls (3.31,0.3) and (6.95,1.4) .. (10.93,3.29)   ;
	
	\draw (120,60.4) node [anchor=north west][inner sep=0.75pt]    {$M$};
	\draw (113,132.4) node [anchor=north west][inner sep=0.75pt]    {$\partial M$};
	\draw (121,219.4) node [anchor=north west][inner sep=0.75pt]    {$N$};
	\draw (280,215.4) node [anchor=north west][inner sep=0.75pt]    {$A_{r_{-}^i, r_+^i}(x_i)$};
	\draw (294,153.4) node [anchor=north west][inner sep=0.75pt]    {$x_{i}$};
	\draw (485,128.4) node [anchor=north west][inner sep=0.75pt]    {$D $};

	\end{tikzpicture}
	\caption{An annulus in the covering of $\partial M$ in the proof of Proposition \ref{41}.} \label{145}
\end{figure}

\begin{proof}[Proof of Proposition \ref{41}]
	We attach a collar neighbourhood $N$ to $\partial M$ such that $g_0$ extends smoothly to $M\cup N$; we denote this extension also by $g_0$. Let 
	\begin{equation*}
	D = \inf_{x\in \partial M} \operatorname{d}_{g_0}(x,\partial(M\cup N))
	\end{equation*}
	denote the thickness of $N$. Fix $\ep>0$ and let $m = \inf_{\partial M}\xi$, and cover a neighbourhood of $\partial M$ in $M$ by a finite collection of annuli $\{A_{r_-^i, r_+^i}(x_i)\}_{1\leq i \leq K}$ centred at $x_i$ such that the collection $\{A_{r_-^i, \frac{1}{2}(r_-^i + r_+^i)}(x_i)\}$ still covers a neighbourhood of $\partial M$ in $M$, and such that for each $i$:
	\begin{enumerate}
		\item $x_i\in N$,
		\item $r_-^i + r_+^i< D$,
		\item $r_-^i = \operatorname{d}_{g_0}(x_i, \partial M)$,
		\item The closed ball $\overline{B_{r_+^i}(x_i)}$ is contained in a single normal coordinate chart $(U_i, \zeta_i)$ mapping $x_i$ to the origin, 
		\item \begin{align*}\frac{r_+^i}{r_-^i} \leq 1+\frac{\ep}{2(\beta+2)},
		\end{align*} 
		\item $r_i^+ < R$ is sufficiently small so that $\frac{f(-\mu_\Gamma^+ + C^{-1}\ep,\,1,\, \dots\,,\,1)}{Ce^{2m}(r_+^i - r_-^i)^2} \geq \sup_M \psi$ (here $C$ and $R$ are as in the statement of Proposition \ref{54}, where we are implicitly identifying the annulus $A_{r_-^i, r_+^i}^i(x_i)$ with its image under $\zeta_i$, which is possible by Property 4).
	\end{enumerate}
 In what follows, we continue to implicitly make the identification between $A_{r_-^i, r_+^i}^i(x_i)$ and  its image under $\zeta_i$. 

Let $w_i$ denote the solution obtained in Proposition \ref{54} on $A_{r_-^i, r_+^i}(x_i)$ with $\ep>0$ and $m=\inf_{\partial M}\xi$ as fixed above. Since $w_i$ is radially decreasing and $w_i(x) =\inf_{\partial M}\xi$ for $x\in \mathbb{S}_{r_-^i}(x_i)$, we have $w_i \leq \inf_{\partial M}\xi$ on $A_{r_-^i, r_+^i}^i(x_i)\cap\partial M$. On the other hand, $w_i = -\infty < u$ on $\mathbb{S}_{r_+^i}(x_i)$. Therefore, the comparison principle (see Proposition \ref{174} below) yields $u\geq w_i$ on $A_{r_-^i, r_+^i}(x_i)\cap M$ for each $i$. This yields a finite lower bound for $u$ on $A_{r_-^i, \frac{1}{2}(r_-^i + r_+^i)}(x_i)$. Since we assume the collection $\{A_{r_-^i, \frac{1}{2}(r_-^i + r_+^i)}(x_i)\}$ still covers a neighbourhood of $\partial M$ in $M$, we may piece together the estimates for $u$ on each annulus $A_{r_-^i, \frac{1}{2}(r_-^i + r_+^i)}(x_i)$ to obtain the desired estimate for $u$ on a uniform neighbourhood of $\partial M$ in $M$. 
\end{proof}

In the above proof we made use of the following comparison principle:

\begin{prop}[Comparison principle]\label{174}
	Let $\alpha>0$ be a positive constant and $(M,g)$ a compact Riemannian manifold with non-empty boundary $\partial M$. Suppose $u,v \in C^0(M)$ with at least one of $u$ or $v$ belonging to $C^2(M\backslash \partial M)$. If $f(-g_u^{-1}A_{g_u}) \geq f(-g_v^{-1}A_{g_v})\geq \alpha > 0$ in the viscosity sense on $M\backslash \partial M$ and $u\leq v$ on $\partial M$, then $u\leq v$ in $M$. 
\end{prop}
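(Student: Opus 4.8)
The plan is to argue by contradiction, exploiting the structural fact that although the equation is only degenerate elliptic it is \emph{proper}. The key observation is that the Schouten tensor $A$ is unchanged when the conformal factor is shifted by a constant (immediate from the definition, since $\operatorname{Ric}$ and $R_g\,g$ are both invariant under constant rescaling of the metric), so $g_{w+c}=e^{2c}g_w$ forces $-g_{w+c}^{-1}A_{g_{w+c}}=e^{-2c}(-g_w^{-1}A_{g_w})$, and hence by $1$-homogeneity of $f$,
\[
f(-g_{w+c}^{-1}A_{g_{w+c}}) = e^{-2c}\,f(-g_w^{-1}A_{g_w}), \qquad \lambda(-g_{w+c}^{-1}A_{g_{w+c}}) = e^{-2c}\,\lambda(-g_w^{-1}A_{g_w}).
\]
This identity is the whole point: raising a subsolution by a positive constant strictly decreases the curvature quantity, so it cannot remain a subsolution unless that quantity vanishes there --- which is precisely what the hypothesis $\alpha>0$ rules out.

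So suppose, for contradiction, that $\theta_0 \defeq \max_M(u-v) > 0$. Since $u\leq v$ on $\partial M$ and $u,v\in C^0(M)$ with $M$ compact, this maximum is attained at some interior point $\bar x\in M\backslash\partial M$. First consider the case $v\in C^2(M\backslash\partial M)$. Then $\phi \defeq v+\theta_0$ lies in $C^2(M\backslash\partial M)$ and satisfies $u\leq\phi$ on $M$ with equality at $\bar x$, so $\phi$ touches $u$ from above at $\bar x$. Reading the hypothesis ``$f(-g_u^{-1}A_{g_u})\geq f(-g_v^{-1}A_{g_v})$ in the viscosity sense'' as the statement that $u$ is a viscosity subsolution of $f(-g_w^{-1}A_{g_w})=h$ with $h\defeq f(-g_v^{-1}A_{g_v})$ (a continuous function, since $v\in C^2$, with $h\geq\alpha>0$), the subsolution test at $\bar x$ gives $\lambda(-g_\phi^{-1}A_{g_\phi})(\bar x)\in\Gamma$ --- automatic from the displayed identity, since $\lambda(-g_v^{-1}A_{g_v})(\bar x)\in\Gamma$ and $\Gamma$ is a cone --- together with $f(-g_\phi^{-1}A_{g_\phi})(\bar x)\geq h(\bar x)$. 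But the identity with $c=\theta_0>0$ gives $f(-g_\phi^{-1}A_{g_\phi})(\bar x) = e^{-2\theta_0} f(-g_v^{-1}A_{g_v})(\bar x) < f(-g_v^{-1}A_{g_v})(\bar x) = h(\bar x)$, using $h(\bar x)\geq\alpha>0$, a contradiction. The case $u\in C^2(M\backslash\partial M)$ is symmetric: now $\phi\defeq u-\theta_0$ touches $v$ from below at $\bar x$, $v$ is a viscosity supersolution of $f(-g_w^{-1}A_{g_w})=h$ with $h\defeq f(-g_u^{-1}A_{g_u})\geq\alpha>0$, so the supersolution test forces $f(-g_\phi^{-1}A_{g_\phi})(\bar x)\leq h(\bar x)$, while the identity with $c=-\theta_0$ gives $f(-g_\phi^{-1}A_{g_\phi})(\bar x) = e^{2\theta_0} f(-g_u^{-1}A_{g_u})(\bar x) > h(\bar x)$, again a contradiction. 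Hence $\max_M(u-v)\leq 0$, i.e.\ $u\leq v$ on $M$.

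The argument is short precisely because the hypothesis that one of $u,v$ is $C^2$ lets that function, shifted by the constant $\theta_0$, serve directly as the test function for the other, so no doubling-of-variables / Jensen--Ishii machinery is needed. I expect the only real subtlety --- rather than a genuine obstacle --- to be the bookkeeping: correctly interpreting the compound hypothesis $f(-g_u^{-1}A_{g_u})\geq f(-g_v^{-1}A_{g_v})\geq\alpha$ as the pair of viscosity sub-/supersolution statements for the equation $f(-g_w^{-1}A_{g_w})=h$ with the appropriate continuous right-hand side $h$, and keeping track of the sign of the rescaling factor $e^{\mp 2\theta_0}$ in the two cases. The positivity $\alpha>0$ enters in exactly one place --- to ensure the curvature quantity at $\bar x$ is nonzero --- which is consistent with the fact that uniqueness for the fully nonlinear Loewner--Nirenberg problem can fail when $(1,0,\dots,0)\in\partial\Gamma$, where the right-hand side of the relevant equation is allowed to touch $0$.
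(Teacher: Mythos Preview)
Your proof is correct and rests on the same key observation the paper highlights: since $A_{g_{w+c}}=A_{g_w}$ and $f$ is $1$-homogeneous, shifting a solution by a positive constant strictly decreases $f(-g_w^{-1}A_{g_w})$ wherever the latter is positive. The paper states this fact and then, for the cases where only one of $u,v$ is $C^2$, simply cites \cite[Theorem~2.1]{CLN13} (applying it directly when $u\in C^2$, and via the substitution $\widetilde F(x,s,p,M)=-F(x,-s,-p,-M)$ when $v\in C^2$). Your argument is therefore the same in spirit but more self-contained: rather than invoking a general comparison theorem, you use the $C^2$ function (shifted by $\theta_0$) directly as the test function at the interior maximum of $u-v$, which is exactly the elementary mechanism behind the cited result when one side is classical. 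Two minor points of bookkeeping: in the $u\in C^2$ case the supersolution test in the paper's convention reads $\lambda(-g_\phi^{-1}A_{g_\phi})(\bar x)\notin\{\lambda\in\Gamma:f(\lambda)>h(\bar x)\}$, so strictly speaking one must also note $\lambda(-g_\phi^{-1}A_{g_\phi})(\bar x)\in\Gamma$ (which you have, since it is a positive multiple of $\lambda(-g_u^{-1}A_{g_u})(\bar x)\in\Gamma$) before concluding $f\leq h$; and your closing remark that uniqueness ``can fail'' when $(1,0,\dots,0)\in\partial\Gamma$ overstates things --- uniqueness is open in that regime, not known to fail.
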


In the proof of Proposition \ref{41}, we only needed Proposition \ref{174} in the case that both $u,v \in C^2(M\backslash \partial M)$. In this case, the proof of Proposition \ref{174} is standard in light of the fact that if $f(-g_v^{-1}A_{g_v})>0$, $c$ is a positive constant and $w = v + c$, then $f(-g_w^{-1}A_{g_w}) < f(-g_v^{-1}A_{g_v})$. The case when $u\in C^0(M)$ in Proposition \ref{174} will be needed later in the paper. When $u\in C^2(M\backslash \partial M)$, Proposition \ref{174} follows from \cite[Theorem 2.1]{CLN13}, since the proof on page 130 therein applies also on Riemannian manifolds with boundary. When $v\in C^2(M\backslash \partial M)$, Proposition \ref{174} again follows from \cite[Theorem 2.1]{CLN13}, therein considering $\widetilde{F}(x,s,p,M) \defeq -F(x,-s,-p,-M)$ in place of $F$.

We now give the proof of Proposition \ref{54}:

\begin{proof}[Proof of Proposition \ref{54}]
It will be more convenient to write our conformal metrics in the form $g^v = v^{-2}g_0$, so that $g_w = g^v$ for $e^{2w}=v^{-2}$. Then the $(0,2)$-Schouten tensor of $g^v$ is given by
\begin{align*}
(A_{g^v})_{ij}= v^{-1}(\nabla_{g_0}^2 v)_{ij} - \frac{1}{2}v^{-2}|\nabla_{g_0} v|_{g_0}^2 (g_0)_{ij} + (A_{g_0})_{ij}. 
\end{align*}
In a fixed normal coordinate system based at the origin, it follows that if $v=v(r)$ then
\begin{align}\label{50}
((g^v)^{-1}A_{g^v})^p_j & =v^2\bigg(\lambda\delta_j^p + \chi\frac{x^px_j}{r^2}\bigg) + O(r^2)v|v_{rr}| + O(r)\big(v^2 + v|v_r|\big) \quad \text{as }r\rightarrow 0,
\end{align}
where 
\begin{equation}\label{16}
\lambda = \frac{v_r}{rv}\bigg(1-\frac{rv_r}{2v}\bigg) \quad\text{and}\quad \chi = \frac{v_{rr}}{v} - \frac{v_r}{vr};
\end{equation}
we refer the reader to Appendix \ref{appb} for the derivation of \eqref{50}. Therefore 
\begin{align}\label{130}
(-(g^v)^{-1}A_{g^v})^p_j \geq -v^2\bigg(\lambda\delta_j^p + \chi\frac{x^px_j}{r^2}\bigg) - |\Psi|\delta_j^p
\end{align}
in the sense of matrices, where $|\Psi| = O(r^2)v|v_{rr}| + O(r)\big(v^2 + v|v_r|\big)$ as $r\rightarrow 0$. \medskip

\noindent \textbf{Step 1:} In this first step we compute and estimate the quantities on the RHS of \eqref{130} for our particular choice of $w$ in \eqref{129}, i.e.~for
\begin{align}\label{51}
v(r) = e^{-\Lambda} (r_+ - r)^{-\beta-\ep},
\end{align}
where we have denoted $\Lambda = m - (\beta+\ep)\ln(r_+ - r_-)$. For shorthand we denote $\phi(r) = r_+ - r$. Then
\begin{align}\label{414}
v_r  =  e^{-\Lambda}(\beta+\ep) \phi^{-\beta - \ep -1} \quad \text{and} \quad 
v_{rr} = e^{-\Lambda}(\beta +\ep)(\beta+\ep+1)\phi^{-\beta - \ep -2},
\end{align}
from which it follows that 
\begin{align*}
\frac{v_r}{rv} = (\beta+\ep) r^{-1}\phi^{-1}, \quad \frac{rv_r}{2v} =  \frac{\beta+\ep}{2}r\phi^{-1} \quad \text{and} \quad \frac{v_{rr}}{v} = (\beta+\ep)(\beta+\ep+1)\phi^{-2}.
\end{align*}
Therefore
\begin{align}\label{133}
\lambda = \frac{v_r}{rv}\bigg(1-\frac{rv_r}{2v}\bigg) =  (\beta+\ep) r^{-1}\phi^{-1}\bigg(1 - \frac{\beta+\ep}{2}r\phi^{-1}\bigg)
\end{align}
and
\begin{align}\label{134}
\chi = \frac{v_{rr}}{v} - \frac{v_r}{vr} = -(\beta+\ep)r^{-1}\phi^{-1} \bigg(1 - (\beta+\ep+1)r\phi^{-1}\bigg).
\end{align}

For $\Psi$ we estimate using \eqref{414} to get
\begin{align*}
|\Psi| & \leq  Cr^2v|v_{rr}| + Cr(v^2 + v|v_r|) \nonumber \\
& \leq Cre^{-2\Lambda}\phi^{-2\beta - 2\ep - 2}\Big(r + \phi + \phi^2\Big) \nonumber \\
& \leq C_1 r e^{-2\Lambda}\phi^{-2\beta - 2\ep - 2} \eqdef \eta. 
\end{align*}

\noindent \textbf{Step 2:} We now use the computations from Step 1 to analyse the eigenvalues of the matrix on the RHS of \eqref{130}, or more precisely the eigenvalues of $-v^2(\lambda\delta_j^p + \chi\frac{x^px_j}{r^2}) - \eta\delta^p_j$, which are given by 
\begin{align*}
-(\chi v^2+\lambda v^2 +\eta, \lambda  v^2 + \eta, \dots, \lambda v^2 +\eta ). 
\end{align*}
We write this vector of eigenvalues more conveniently as
\begin{align*}
(-\lambda v^2 - \eta)\bigg(\frac{\chi v^2}{\lambda v^2 + \eta}+1, 1, \dots, 1\bigg).
\end{align*} 
We make the following two claims: \bigskip

\noindent\textit{\underline{Claim 1:}} There exist constants $c_1>0$ and $0<R_1<1$ depending only on $g_0, f$ and $\Gamma$ such that 
\begin{align}\label{164}
-\lambda v^2 - \eta >c_1e^{-2\Lambda}\phi^{-2\beta - 2\ep - 2} \quad \text{in }\{r_-<r<r_+\}
\end{align}
whenever $1<\frac{r_+}{r_-}< 1+\frac{\ep}{2(\beta+2)}$ and $r_+<R_1$. \bigskip 

\noindent\textit{\underline{Claim 2:}} There exists a constant $c_2>0$ depending only on $g_0, f$ and $\Gamma$, and a constant $0< R_2<1$ depending additionally on $\ep$, such that
\begin{align}\label{202}
\frac{\chi v^2}{\lambda v^2 + \eta}+1 > - \mu_\Gamma^+ + c_2\ep \quad \text{in }\{r_-<r<r_+\}
\end{align}
whenever $1<\frac{r_+}{r_-}< 1+\frac{\ep}{2(\beta+2)}$ and $r_+<R_2$. \bigskip 

Once the claims are proved, Proposition \ref{54} is obtained as follows. First fix $r_+$ and $r_-$ such that $1<\frac{r_+}{r_-}< 1+\frac{\ep}{2(\beta+2)}$ and $r_+<\min\{R_1,R_2\}$. By Claim 2 and the definition of $\mu_{\Gamma}^+$, 
\begin{align*}
f\bigg(\frac{\chi v^2}{\lambda v^2 + \eta}+1, 1, \dots, 1\bigg)> f(-\mu_\Gamma^+ + c_2\ep, 1,\dots,1)>0 \quad \text{in }\{r_-<r<r_+\}.
\end{align*}
Then by Claim 1, it follows that 
\begin{align*}
f\bigg((-\lambda v^2 - \eta)\bigg(&\frac{\chi v^2}{\lambda v^2 + \eta}+1, 1, \dots, 1\bigg)\bigg)\nonumber \\
& >c_1e^{-2\Lambda}\phi^{-2\beta-2\ep - 2}f(-\mu_\Gamma^+ + c_2\ep, 1,\dots,1) \quad \text{in }\{r_-<r<r_+\},
\end{align*} 
from which \eqref{76} follows. To complete the proof of Proposition \ref{54}, it therefore remains to prove Claims 1 and 2. \medskip 

\noindent\textbf{Note:} We will use at various stages the fact that 
\begin{align}\label{156}
1<\frac{r_+}{r_-}< 1+\frac{\ep}{2(\beta+2)} \quad \iff \quad 0<\phi r^{-1} < \frac{\ep}{2(\beta+2)} \quad \text{in }\{r_-<r<r_+\}.
\end{align}

\begin{proof}[Proof of Claim 1]
	Suppose $1<\frac{r_+}{r_-}< 1+\frac{\ep}{2(\beta+2)}$ and $r_+<1$. We start by computing
	\begin{align}\label{200}
	-\lambda v^2 & =  e^{-2\Lambda}\phi^{-2\beta - 2\ep -2}(\beta+\ep)\bigg( \frac{\beta+\ep }{2} - \phi r^{-1}\bigg).
	\end{align} 
	By \eqref{156} and \eqref{200}, it follows that
	\begin{align}\label{157}
	-\lambda v^2 \geq \frac{1}{C}e^{-2\Lambda}\phi^{-2\beta - 2\ep - 2} \quad \text{in }\{r_-<r<r_+\}.
	\end{align}
Recalling also that
	\begin{align}\label{201}
	\eta = C_1re^{-2\Lambda}\phi^{-2\beta - 2\ep -2},
	\end{align}
	we see that \eqref{157} and \eqref{201} imply 
\begin{align}\label{77}
-\lambda v^2 - \eta \geq (C^{-1}-C_1r)e^{-2\Lambda}\phi^{-2\beta- 2\ep -2} \quad \text{in }\{r_- < r < r_+\}. 
\end{align}
The inequality \eqref{164} then follows from \eqref{77} after taking $r_+$ sufficiently small. This completes the proof of Claim 1.
\end{proof}

\begin{proof}[Proof of Claim 2]
	Suppose $1<\frac{r_+}{r_-}< 1+\frac{\ep}{2(\beta+2)}$ and $r_+<1$. By \eqref{200} and the fact that $\mu_\Gamma^+ = \frac{2+\beta}{\beta}$ we have 
	\begin{align}\label{423}
	-\lambda v^2 - \mu_\Gamma^+\lambda v^2 = -\frac{2+2\beta}{\beta}\lambda v^2 = \frac{2+2\beta}{\beta}e^{-2\Lambda}\phi^{-2\beta - 2\ep -2}(\beta+\ep)\bigg( \frac{\beta+\ep }{2} - \phi r^{-1}\bigg),
	\end{align}
	and by the formula for $\chi$ in \eqref{134} we have
	\begin{align}\label{424}
	-\chi v^2 = e^{-2\Lambda}(\beta+\ep)\phi^{-2\beta-2\ep-2}\big(\phi r^{-1} - (\beta+\ep+1)\big).
	\end{align}
	It follows from \eqref{423} and \eqref{424} that 
	\begin{align}\label{159}
	-\chi v^2 - \lambda v^2 - \mu_\Gamma^+\lambda v^2 = e^{-2\Lambda}\phi^{-2\beta - 2\ep - 2}\frac{\beta+\ep}{\beta}\big(\ep-(\beta+2)r^{-1}\phi\big).
	\end{align}
	On the other hand, by \eqref{156} we have
	\begin{align*}
	\frac{\beta+\ep}{\beta}\Big(\ep-(\beta+2)r^{-1}\phi\Big) > \frac{\ep}{2} \quad \text{in }\{r_- < r< r_+\},
	\end{align*}
	which when substituted into \eqref{159} yields
	\begin{align}\label{160}
	-\chi v^2 - \lambda v^2 - \mu_\Gamma^+ \lambda v^2 > \frac{\ep}{2}e^{-2\Lambda}\phi^{-2\beta - 2\ep - 2} \quad \text{in }\{r_- < r< r_+\}.
	\end{align}
	Recalling \eqref{201}, the estimate \eqref{160} therefore implies
	\begin{align}\label{44}
	-\chi v^2 -\lambda v^2 -\mu_\Gamma^+&\lambda v^2 - \eta - \mu_\Gamma^+ \eta \nonumber \\
	& \geq \bigg(\frac{\ep}{2}- Cr\bigg)e^{-2\Lambda}\phi^{-2\beta-2\ep - 2} \quad \text{in }\{r_- < r< r_+\}.
	\end{align}
	After taking $r_+$ smaller if necessary (but in a way that only depends on $\ep$ and the constant $C$ in \eqref{44}), we therefore have
	\begin{align*}
	-\chi v^2 -\lambda v^2 -\mu_\Gamma^+\lambda v^2 - \eta - \mu_\Gamma^+ \eta \geq \frac{\ep}{4}e^{-2\Lambda}\phi^{-2\beta - 2\ep - 2}\quad \text{in }\{r_- < r< r_+\},
	\end{align*}
	or equivalently 
\begin{align}\label{203}
\frac{\chi v^2}{\lambda v^2 + \eta}+1 \geq -\mu_\Gamma^++ \frac{\frac{\ep}{4}e^{-2\Lambda}\phi^{-2\beta-2\ep-2}}{-\lambda v^2 - \eta}. 
\end{align}
On the other hand, by \eqref{200} we have
\begin{align*}
0<-\lambda v^2 - \eta  \leq - \lambda v^2 \leq Ce^{-2\Lambda}\phi^{-2\beta-2\ep-2}\quad\text{in }\{r_- < r< r_+\}.
\end{align*}
Thus, if $r_+$ is chosen sufficiently small (but depending only on $g_0, f, \Gamma$ and $\ep$), we see
\begin{align*}
\frac{\chi v^2}{\lambda v^2 + \eta}+1 \geq -\mu_\Gamma^+ + c\ep \quad \text{in }\{r_- < r< r_+\},
\end{align*}
as required. This completes the proof of Claim 2.
\end{proof} 
\noindent As explained above, with Claims 1 and 2 established, the proof of Proposition \ref{54} is complete. 
\end{proof}

\subsection{Gradient estimate}\label{148}

In this section we prove the global gradient estimate:

\begin{prop}\label{154}
	Suppose $(f,\Gamma)$ satisfies \eqref{21'}--\eqref{24'} and \eqref{418}, and let $\tau\leq 1$. Let $\psi\in C^\infty(M)$ be positive and $\xi\in C^\infty(\partial M)$. Then there exists a constant $C$ which is independent of $\tau$ but dependent on $g_0, f, \Gamma$ and upper bounds for $\|\psi\|_{C^1(M)}, \|\xi\|_{C^2(\partial M)}$ and $\|u\|_{C^0(M)}$ such that any $C^3$ solution to \eqref{12} satisfies $|\nabla_{g_0} u|_{g_0}\leq C$ on $M$. 
\end{prop}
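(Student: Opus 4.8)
The plan is to deduce the global bound from three ingredients: the interior estimate of Theorem~\ref{40}, a boundary gradient estimate $\sup_{\partial M}|\nabla_{g_0}u|_{g_0}\le C$, and the maximum principle argument already carried out in Section~\ref{132} and Section~\ref{301}, which in fact uses no interior ball and applies equally at any interior critical point of $\rho|\nabla u|^2$. Concretely, I would fix a uniformly small $\delta>0$ so that $d=\operatorname{d}(\cdot,\partial M)$ is smooth on $\overline{M_\delta}$, take a cutoff $\rho=\eta(d)$ with $\eta\equiv1$ on $[0,\delta/2]$, $\eta\equiv0$ on $[\delta,\infty)$, $|\eta'|^2\le C\eta$, $|\eta''|\le C$, and consider $H=\rho|\nabla u|^2$ on $\overline{M_\delta}$ together with Theorem~\ref{40} applied on interior balls of radius $\delta/4$ in $M\setminus M_{\delta/2}$ (which, with the $C^0$ bound on $u$, already gives $|\nabla u|\le C$ there, $C$ depending on $\delta$ but not on $\tau$). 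If $x_0$ is a maximum point of $H$ and $x_0\in\{d=\delta\}$ then $H(x_0)=0$; if $x_0\in M_\delta\setminus\partial M$ then the computations \eqref{120}--\eqref{419} and Lemmas~\ref{34}, \ref{68}, \ref{63}, \ref{65} and Proposition~\ref{303} go through verbatim (they use only the equation, normal coordinates centred at $x_0$, and the structural bounds on $\rho$), giving $H(x_0)\le C(1+e^{2\sup_M u})$; and if $x_0\in\partial M$ then $\rho(x_0)=1$ and $H(x_0)=|\nabla u|^2(x_0)$ is controlled by the boundary estimate below. In every case $H(x_0)\le C$, so $|\nabla u|\le C$ on $M_{\delta/2}$, and combining with the interior bound proves the proposition.

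It therefore remains to prove $|\nabla_{g_0}u|_{g_0}\le C$ on $\partial M$, uniformly in $\tau$. Since $u=\xi$ on $\partial M$, the component of $\nabla u$ tangent to $\partial M$ has norm at most $\|\xi\|_{C^1(\partial M)}$, so only two-sided control of the normal derivative $\partial_\nu u$ is needed ($\nu$ the inward unit normal). For the upper bound $\partial_\nu u\le C$, I would follow Guan~\cite{Guan08} and build a supersolution of the $(f^\tau,\Gamma^\tau)$-equation of the form $\bar u=\xi^\ast+a\,d-b\,d^2$ in $M_\delta$ ($\xi^\ast$ a fixed smooth extension of $\xi$, and $a,b$ large depending on $\|\xi\|_{C^2(\partial M)}$, $\|u\|_{C^0(M)}$, $\|g_0\|_{C^2}$ and $\sup_M\psi$ but not on $\tau$): for $b$ large the term $b\,d^2$ forces $\lambda(-g_{\bar u}^{-1}A_{g_{\bar u}})$ out of $\Gamma^\tau$, so $\bar u$ is a viscosity supersolution, with $\bar u\ge u$ on $\partial(M_\delta)$ and $\bar u=u$ on $\partial M$; the comparison principle (the $(f^\tau,\Gamma^\tau)$-analogue of Proposition~\ref{174}) then yields $u\le\bar u$ in $M_\delta$, hence $\partial_\nu u\le\partial_\nu\bar u\le a$. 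This step does not use $\mu_\Gamma^+>1$.

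For the lower bound $\partial_\nu u\ge -C$, no elementary barrier is available, because a subsolution must satisfy the cone constraint $\lambda(-g_{\underline u}^{-1}A_{g_{\underline u}})\in\Gamma^\tau$, which cannot be arranged by a mere perturbation of $g_0$ precisely because we do not assume $\lambda(-g_0^{-1}A_{g_0})\in\Gamma$. Instead I would use the radial subsolutions of Proposition~\ref{54}, exactly as in the proof of Proposition~\ref{41}: given $p\in\partial M$, attach the collar $N$, place a small annulus $A_{r_-,r_+}(x_p)$ with $x_p\in N$, $r_-=\operatorname{d}(x_p,\partial M)$ and $\mathbb{S}_{r_-}(x_p)$ tangent to $\partial M$ at $p$, and take the function $w$ of \eqref{129} with $m=\xi(p)$, corrected by a term linear in the directions tangent to $\partial M$ so that the corrected barrier agrees with $\xi$ to first order at $p$, stays $\le\xi=u$ on $\partial M\cap A_{r_-,r_+}(x_p)$, and equals $-\infty<u$ on $\mathbb{S}_{r_+}(x_p)$. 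Choosing $\ep>0$ so that $(-\mu_\Gamma^++c\ep,1,\dots,1)\in\Gamma$, and then $r_\pm$ small so that the right-hand side of \eqref{76} exceeds $\sup_M\psi$ (note $r_+-r_-$ and hence $r_-$ stay bounded below, since $r_+-r_-<\tfrac{\ep}{2(\beta+2)}r_-$), the corrected barrier is still a subsolution; using $\Gamma\subseteq\Gamma^\tau$ for $\tau\in(0,1]$ and the normalisation $f^\tau(\tfrac12,\dots,\tfrac12)=1$ to see it is a subsolution of the $\tau$-equation uniformly in $\tau$, the comparison principle gives the barrier $\le u$ near $p$, and comparing normal derivatives at $p$ yields $\partial_\nu u(p)\ge -\tfrac{\beta+\ep}{r_+-r_-}-C\ge -C$ with $C$ independent of $p$ and $\tau$.

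The main obstacle is this lower bound on the normal derivative — or, more precisely, the construction of an \emph{admissible} subsolution of the $(f^\tau,\Gamma^\tau)$-equation near $\partial M$ with prescribed boundary value $\xi$: unlike the Guan-type supersolution, it must have its Schouten eigenvalue vector inside the cone, and this is exactly what the hypothesis $\mu_\Gamma^+>1$ and the radial construction of Proposition~\ref{54} deliver. A secondary difficulty is purely bookkeeping, namely ensuring that every barrier and every invocation of the comparison principle is uniform in $\tau\in[0,1]$; this follows from $\Gamma\subseteq\Gamma^\tau$ together with $f^\tau(\tfrac12,\dots,\tfrac12)=1$, and in contrast with the Hessian estimate of Section~\ref{149} none of the steps above deteriorates as $\tau\to1$.
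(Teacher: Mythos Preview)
Your overall architecture---interior estimate from Theorem~\ref{40}, reduction to the boundary normal derivative, and barriers on each side---matches the paper's. The lower bound $\partial_\nu u\ge -C$ via the annular subsolutions of Proposition~\ref{54} is also the paper's route, though the paper does not add a tangential linear correction: instead it takes $m=u(x_0)$ and then chooses $\ep$ \emph{large} in \eqref{129}, so that $w$ decreases fast enough along $\partial M$ (at rate $\sim(\beta+\ep)/(r_+-r_-)$) to stay below $\xi$ there. Your correction idea can be made to work, but it needs the extra check that the perturbation of $-A_{g_w}$ by the cross terms $\langle\nabla w,\nabla\ell\rangle g_0-d\ell\otimes dw-\cdots$ (which are of order $|\nabla w|\cdot|\nabla\ell|\sim(r_+-r)^{-1}$, one order below the principal part) keeps the eigenvalues in $\Gamma$; the paper's choice avoids this.

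There is, however, a genuine gap in your supersolution for the upper bound $\partial_\nu u\le C$. With $\bar u=\xi^*+ad-bd^2$, the eigenvalues of $-g_0^{-1}A_{g_{\bar u}}$ near $\partial M$ are, up to $O(a)$ errors, $\big(-2b-\tfrac{a^2}{2},\ \tfrac{a^2}{2},\dots,\tfrac{a^2}{2}\big)$. For the barrier to be a supersolution \emph{uniformly in $\tau\in[0,1]$} you must force this vector out of $\Gamma^\tau$ for every $\tau$; since $\Gamma^0=\Gamma_1^+$, this forces $\sigma_1\le 0$, i.e.\ $b\ge\tfrac{(n-2)a^2}{4}+O(a)$. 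But then on $\{d=\delta\}$ one has
\[
\bar u - \xi^*=a\delta-b\delta^2\le a\delta-\tfrac{(n-2)a^2}{4}\delta^2\le \sup_{a>0}\Big(a\delta-\tfrac{(n-2)a^2}{4}\delta^2\Big)=\tfrac{1}{\,n-2\,},
\]
independently of $\delta$. So whenever $\sup_M u-\inf_{M_\delta}\xi^*>\tfrac{1}{n-2}$ the inequality $\bar u\ge u$ on $\{d=\delta\}$ fails, and the comparison cannot be closed. The paper sidesteps this by exploiting $\Gamma^\tau\subset\Gamma_1^+$ to reduce to the \emph{semilinear} inequality $\sigma_1(-g_0^{-1}A_{g_{\bar u}})\le 0$ and then uses the logarithmic barrier
\[
\bar u=\overline{\xi}+\tfrac{1}{n-2}\ln\frac{d+\delta^2}{\delta^2},
\]
which satisfies $\sigma_1(-g_0^{-1}A_{g_{\bar u}})\le 0$ in $M_\delta$ for small $\delta$ and, crucially, has $\bar u\big|_{\{d=\delta\}}\ge\overline{\xi}+\tfrac{1}{n-2}\ln(1/\delta)\to+\infty$ as $\delta\to0$, so $\bar u\ge\max_M u$ there for $\delta$ small depending only on $\|u\|_{C^0}$. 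This is the missing idea: a barrier whose value on the inner boundary can be made arbitrarily large while still satisfying the differential inequality.
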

\begin{proof}
	By our interior local gradient estimate in Theorem \ref{40}, we only need to prove the gradient estimate near the boundary, say in $B_{1/2}(y_0)\cap M$ where $y_0\in \partial M$ is arbitrary. Consider $H=\rho|\nabla_{g_0} u|_{g_0}^2$, where $\rho$ is a smooth cutoff function satisfying $\rho=1$ on $B_{1/2}(y_0)$, $\rho=0$ outside $B_1(y_0)$, $|\nabla_{g_0} \rho|_{g_0}\leq C\rho^{1/2}$ and $|\nabla_{g_0}^2\rho|_{g_0}\leq C$. Suppose that $H$ attains its maximum at $x_0\in M$. If $x_0\not\in B_1(y_0)\cap M$, then $\nabla_{g_0} u=0$ in $B_{1/2}(y_0)\cap M$ and we are done. If $x_0\in B_1(y_0)\cap (M\backslash \partial M)$, then our proof of Theorem \ref{40} applies and we again obtain the desired estimate. It remains to consider the case that $x_0\in B_1(y_0)\cap \partial M$.

	We first observe that, since the tangential derivatives of $u$ on $\partial M$ are bounded by $\|\xi\|_{C^1(\partial M)}$, we only need to bound the normal derivative $\nabla_{\nu}u(x_0)$, where $\nu$ denotes the inward pointing unit normal to $\partial M$ at $x_0$. We first consider the lower bound for $\nabla_{\nu}u(x_0)$. With the same setup and notation as in the proof of Proposition \ref{41}, except now with $m=u(x_0)$ and $\ep>0$ to be fixed later, consider the set $\mathcal{S}$ of annuli $A_{r_-, r_+}(y)$ satisfying $\mathbb{S}_{r_-}(y)\cap\partial M = \{x_0\}$ and conditions 1)--6) in the proof of Proposition \ref{41}. By taking $\ep$ sufficiently large and $r_+$ sufficiently small, we may choose an annulus $A_{r_-, r_+}(y)$ in $\mathcal{S}$ such that the function $w$ on $A_{r_-, r_+}(y)$, as defined in \eqref{129}, satisfies $w \leq u$ on $A_{r_-,r_+}(y)\cap \partial M$. By the comparison principle stated in Proposition \ref{174}, it follows that $w\leq u$ on $A_{r_-,r_+}(y) \cap M$. Thus for $x\in A_{r_-,r_+}(y) \cap M$ we have
	\begin{align*}
	\frac{u(x) - u(x_0)}{\operatorname{d}(x,x_0)} = \frac{u(x) - w(x_0)}{\operatorname{d}(x,x_0)} \geq \frac{w(x) - w(x_0)}{\operatorname{d}(x,x_0)},
	\end{align*}
	which implies $\nabla_\nu u(x_0) \geq \nabla_\nu w(x_0)$. 
	
	For the upper bound for $\nabla_\nu u(x_0)$, we construct a barrier function similar to that of Guan \cite{Guan08}. First observe that since $\Gamma\subset\Gamma_1^+$, we have
	\begin{align*}
	0 < \sigma_1(-g_0^{-1}A_{g_u}) & = \Delta_{g_0} u + \frac{n-2}{2}|\nabla_{g_0} u|_{g_0}^2 -\sigma_1(g_0^{-1}A_{g_0}).  
	\end{align*}
	Now let $d(x) = \operatorname{d}(x,\partial M)$ and recall $M_{\delta} = \{x\in M: d(x)<\delta\}$. It is well-known that for sufficiently small $\delta>0$, $d$ is smooth in $M_\delta$ with $|\nabla_{g_0} d|_{g_0} = 1$. To obtain an upper bound for $\nabla_\nu u(x_0)$, it suffices to find a function $\bar{u}\in C^3(M_\delta)$ satisfying 
	\begin{equation}\label{10'}
	\begin{cases}
	\sigma_1(-g_0^{-1}A_{g_{\bar{u}}}) \leq 0 & \text{in }M_\delta\\
	\bar{u} = u  & \text{on }\partial M\\
	\bar{u}\geq u & \text{on }\partial M_\delta \backslash \partial M. 
	\end{cases}
	\end{equation}
	Indeed, once such a function $\bar{u}$ is obtained, the maximum principle implies $\bar{u}\geq u$ on $M_\delta$, and it follows that for any $x\in M_\delta$, we have
	\begin{equation*}
	\frac{u(x) - u(x_0)}{\operatorname{d}(x,x_0)} = \frac{u(x) - \bar{u}(x_0)}{\operatorname{d}(x,x_0)} \leq \frac{\bar{u}(x) - \bar{u}(x_0)}{\operatorname{d}(x,x_0)},
	\end{equation*}
	which implies $\nabla_\nu u(x_0) \leq \nabla_\nu \bar{u}(x_0)$. 
	
	To construct $\bar{u}\in C^3(M_\delta)$ satisfying \eqref{10'}, we first extend $\xi$ to a smooth function $\overline{\xi}$ on $M_\delta$ by defining $\overline{\xi}$ to be constant along geodesics normal to $\partial M$; such a construction is always possible for sufficiently small $\delta>0$. We then define
	\begin{equation*}
	\bar{u}(x) = \overline{\xi} + \frac{1}{n-2} \ln \frac{d(x)+\delta^2}{\delta^2}.
	\end{equation*}
	We first observe that $\bar{u}|_{\partial M} = \overline{\xi}|_{\partial M} = \xi = u|_{\partial M}$. Next we calculate $\sigma_1(-g_0^{-1}A_{g_{\bar{u}}})$. In what follows, we denote by $\nabla d$ the differential of $d$ (whereas $\nabla_{g_0}d$ will continue to denote the gradient of $d$ with respect to $g_0$). Routine computations yield
	\begin{align*}
	\nabla_{g_0}\bar{u}(x) = \nabla_{g_0}\overline{\xi}(x) + \frac{1}{n-2}\frac{\nabla_{g_0}d(x)}{d(x) +\delta^2} 
	\end{align*} 
	and
	\begin{align*} 
	 \nabla_{g_0}^2 \bar{u}(x) = \nabla_{g_0}^2 \overline{\xi}(x) + \frac{1}{n-2}\bigg(\frac{ \nabla_{g_0}^2 d(x) }{d(x) + \delta^2} - \frac{\nabla d(x)\otimes \nabla d(x)}{(d(x)+\delta^2)^2}\bigg), 
	\end{align*} 
	from which it follows that 
	\begin{align}\label{74}
	\sigma_1(-g_0^{-1}A_{g_{\bar{u}}}) & =  \Delta_{g_0} \bar{u} + \frac{n-2}{2}|\nabla_{g_0}\bar{u}|_{g_0}^2 -\sigma_1(g_0^{-1}A_{g_0}) \nonumber \\
	& \leq C - \frac{1}{2(n-2)}\frac{1}{(d(x) + \delta^2)^2} + \frac{C}{d(x) + \delta^2}, 
	\end{align}
	where we have used the fact that $|\nabla_{g_0}d|_{g_0} = 1$ and $|\Delta_{g_0}d|\leq C$ in $M_{\delta}$ for $\delta$ sufficiently small, and the fact that $\|\overline{\xi}\|_{C^2(M_\delta)}$ is bounded by a constant depending only on $g_0$ and $\|\xi\|_{C^2(\partial M)}$. We then see that the negative term on the last line of \eqref{74} dominates the remaining terms for $\delta>0$ sufficiently small. Therefore, for $\delta>0$ sufficiently small, we have $\sigma_1(-g_0^{-1}A_{g_{\bar{u}}})\leq 0$ in $M_\delta$.

	Finally, we observe that on $\partial M_\delta\backslash \partial M$ we have
	\begin{align*}
	\bar{u} = \overline{\xi} +  \frac{1}{n-2}\ln \bigg(\frac{\delta+\delta^2}{\delta^2}\bigg) \geq \overline{\xi} + \frac{1}{n-2}\ln (1/\delta).
	\end{align*}
	Choosing $\delta$ smaller if necessary so that $\overline{\xi} + \frac{1}{n-2}\ln (1/\delta) \geq \max_{M} u$ on $\partial M_\delta\backslash\partial M$, the construction of $\bar{u}$ is complete. This completes the proof of Proposition \ref{154}.
\end{proof}

\subsection{Hessian estimate}\label{149}

In this section we give the global Hessian estimate assuming $\tau<1$:

\begin{prop}\label{83}
		Suppose $(f,\Gamma)$ satisfies \eqref{21'}--\eqref{24'} and let $\tau<1$. Let $\psi\in C^\infty(M)$ be positive and $\xi\in C^\infty(\partial M)$. Then there exists a constant $C$ depending on $g_0, f, \Gamma,(1-\tau)^{-1}$ and upper bounds for $\|\psi\|_{C^2(M)}, \|\xi\|_{C^2(M)}$ and $\|u\|_{C^1(M)}$ such that any solution to \eqref{12} satisfies $|\nabla_{g_0}^2 u|_{g_0} \leq C$ on $M$. 
\end{prop}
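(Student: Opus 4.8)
The plan is to adapt the method of Guan \cite{Guan08}, the only new inputs being the general structure of $(f,\Gamma)$ and the partial ellipticity recorded in Proposition \ref{46}; unlike in \cite{Guan08} we do not assume $g_0$ is admissible, but this causes no difficulty here since the $C^0$ and $C^1$ bounds from Propositions \ref{75}, \ref{151} and \ref{154} are already available. Writing $S=A_{g_0}$ and $W=\nabla_{g_0}^2 u+\tfrac12|\nabla_{g_0}u|_{g_0}^2 g_0-du\otimes du-S$ as in Section \ref{14}, the $C^1$ bound gives $|\nabla_{g_0}^2 u-W|_{g_0}\leq C$, and $\mathrm{tr}_{g_0}W=\sigma_1(g_0^{-1}W)>0$ because $\Gamma^\tau\subseteq\Gamma_1^+$; hence it suffices to establish an \emph{upper} bound $\lambda_{\max}(g_0^{-1}W)\leq C$, since together with $\mathrm{tr}_{g_0}W>0$ this forces $|\lambda(g_0^{-1}W)|\leq (n-1)C$ and therefore $|\nabla_{g_0}^2 u|_{g_0}\leq C$. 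To prove this upper bound one runs a single global maximum-principle argument for a test quantity of the form $\Phi(x,e)=W(e,e)\,e^{\varphi(|\nabla_{g_0}u|_{g_0}^2)}$, defined for $x\in M$ and $e$ a $g_0$-unit vector at $x$, where $\varphi$ is a standard auxiliary function depending on the $C^1$ bound, so that an upper bound on $\Phi$ at its maximum point $(x_0,e_0)$ yields the desired bound on $\lambda_{\max}(g_0^{-1}W)$ over all of $M$.

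If $x_0\in M\backslash\partial M$, the estimate follows from the by-now standard second-order computation: one differentiates $f^\tau(g_0^{-1}W)=\psi e^{2u}$ twice in the direction $e_0$, uses positivity of $(F^{ij}_\tau)$ and the fact that the Hessian of $\Phi$ is non-positive at $x_0$, invokes concavity of $f^\tau$ to absorb the third-order terms, and uses the $C^1$ bounds together with the lower bound $\mathcal F_\tau\geq f^\tau(1,\dots,1)=f(1,\dots,1)$ from \eqref{183} (which is uniform in $\tau$) to handle the remaining terms; this gives $\lambda_{\max}(g_0^{-1}W)(x_0)\leq C$, as in \cite{Guan08} and the one-order-lower computation in the proof of Lemma \ref{34}. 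The case $x_0\in\partial M$ is the heart of the matter and is treated next.

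Fix boundary normal coordinates at $x_0\in\partial M$ with $e_n$ the inward unit normal and $\{e_\alpha\}_{\alpha<n}$ tangential, and set $d(\cdot)=\operatorname{d}(\cdot,\partial M)$. Differentiating the boundary condition $u=\xi$ twice along $\partial M$ bounds the double tangential components $W_{\alpha\beta}(x_0)$, $\alpha,\beta<n$, in terms of $\|\xi\|_{C^2(\partial M)}$, $\|g_0\|_{C^2}$ and the $C^1$ bound. For the mixed components $W_{\alpha n}(x_0)$ we follow \cite{Guan08}: for each fixed $\alpha<n$ we apply the linearised operator of \eqref{12} at $u$ to $\pm\,\partial_\alpha(u-\overline{\xi})$ plus a quadratic correction (with $\overline{\xi}$ a fixed extension of $\xi$ to a neighbourhood of $\partial M$) and compare against a barrier of the form $\Theta=d-Nd^2+\sum_{\beta<n}x_\beta^2$ on a small half-ball, using $\mathcal F_\tau\geq\tfrac1C$ and the partial ellipticity of Proposition \ref{46} to make $\Theta$ effective; this yields $|W_{\alpha n}(x_0)|\leq C$. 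It remains to bound $W_{nn}(x_0)$ from above. Write $\lambda=\lambda(g_0^{-1}W)(x_0)\in\Gamma^\tau$ and $t=W_{nn}(x_0)$; if $t$ is large, the standard eigenvalue-perturbation lemma (see \cite{Guan08}) gives $\lambda=(\mu_1,\dots,\mu_{n-1},t)+O(1/t)$ with $|\mu_i|\leq C$, so from the definition of $\lambda^\tau$ we get $\lambda^\tau_i\geq (1-\tau)t-C$ for every $i$, whence by monotonicity and $1$-homogeneity of $f$,
\[
\psi(x_0)e^{2u(x_0)}=f^\tau(\lambda)=\frac{f(\lambda^\tau)}{\tau+n(1-\tau)}\ \geq\ \frac{(1-\tau)t-C}{n}\,f(1,\dots,1),
\]
which forces $t\leq\frac{C}{1-\tau}$. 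These bounds control every entry of $(g_0^{-1}W)(x_0)$, so $\lambda_{\max}(g_0^{-1}W)(x_0)\leq\frac{C}{1-\tau}$, and together with the interior case this completes the proof.

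The main obstacle is the mixed boundary estimate for $W_{\alpha n}$: since \eqref{12} is non-uniformly elliptic when $f\neq c\sigma_1$, the matrix $(F^{ij}_\tau)$ need not be uniformly elliptic, so Guan's barrier must be run using only the partial ellipticity from Proposition \ref{46} and the lower bound $\mathcal F_\tau\geq\tfrac1C$, and one must check that the construction uses no quantity beyond those already controlled (if needed, the barrier can instead be built from the radial comparison functions of Proposition \ref{54}). The other point to watch is the $(1-\tau)^{-1}$ dependence: it enters only through the normal-normal estimate above and is genuinely unavoidable, consistent with the non-existence of $C^2$ solutions at $\tau=1$ established in \cite{LN20b, LNX22}.
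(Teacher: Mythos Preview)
Your overall architecture matches the paper's: reduce to an upper bound on $\lambda_{\max}$, handle the interior maximum by the standard second-order computation following \cite{Guan08}, and at a boundary maximum split into tangential, mixed, and double-normal components. Your double-normal argument is essentially the paper's.

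The substantive difference is in the mixed-derivative estimate, and here your account has a gap. The paper does \emph{not} use Proposition~\ref{46} for this step. Instead it rewrites the equation in the form \eqref{417}, namely $f(-g_0^{-1}A_{g_u}^\tau)=\psi e^{2u}$ with $A_{g_u}^\tau=-\tau\nabla_{g_0}^2u-(1-\tau)\Delta_{g_0}u\,g_0+\ldots$, so that the linearised operator \eqref{165} contains an explicit term $(1-\tau)\Delta_{g_0}\eta\,\sum_iF^{ii}$. This makes $\mathcal{L}$ uniformly elliptic with constant $\sim(1-\tau)\sum_iF^{ii}$, and the very simple barrier $v=\tfrac{N}{2}d^2-d$ with $N\sim(1-\tau)^{-1}$ then gives $\mathcal{L}v\geq\sum_iF^{ii}$ directly (see \eqref{81}--\eqref{152} and Lemma~\ref{84}). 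In particular, the $(1-\tau)^{-1}$ dependence enters \emph{already in the mixed estimate}, contrary to your closing remark that it enters only through the normal-normal bound.

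Your proposed route via Proposition~\ref{46} and a CNS-type barrier $\Theta=d-Nd^2+\sum_{\beta<n}x_\beta^2$ is not obviously workable as stated: the tangential quadratic produces $\sum_{\beta<n}F^{\beta\beta}_\tau$, but Proposition~\ref{46} controls $F^{ii}_\tau$ only along the two smallest-eigenvalue directions of $g_0^{-1}W$, which need not be tangential at $x_0$. Absent a mechanism to align these, one is effectively forced back onto the uniform $(1-\tau)$-ellipticity (equivalently, $F^{ii}_\tau\geq\tfrac{1-\tau}{n}\mathcal F_\tau$ for every $i$), which is exactly the paper's mechanism. The suggestion to build the barrier from the radial functions of Proposition~\ref{54} is also off-target: those are \emph{sub}solutions used for lower bounds and the gradient lower bound on $\partial M$, not supersolutions for the linearised operator.
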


We point out that we do not require $\mu_\Gamma^+>1$ in Proposition \ref{83}. 

\begin{proof}
	If the maximum of $|\nabla^2_{g_0} u|_{g_0}$ occurs in $M\backslash \partial M$, then one can appeal to the proof of the global estimate of Gursky \& Viaclovsky \cite{GV03b} if $f=\sigma_k^{1/k}$, or the proof of the global estimate of Guan \cite{Guan08} for general $(f,\Gamma)$ satisfying \eqref{21'}--\eqref{24'}. So we suppose that the maximum occurs at a point $x_0\in\partial M$. Let $e_n$ denote the interior unit normal vector field on $\partial M$, and fix an orthonormal frame $\{e_1,\dots,e_{n-1}\}$ for the tangent bundle of $\partial M$ near $x_0$. By parallel transporting along geodesics normal to $\partial M$, we may extend this to an orthonormal frame $\{e_1,\dots,e_n\}$ for the tangent bundle of $M$ near $x_0$. Since $(\nabla_{g_0}^2 u)_{ij}(x_0) = (\nabla_{g_0}^2\xi)_{ij}(x_0)$ for $i,j\not=n$, we only need to estimate $(\nabla_{g_0}^2 u)_{ij}(x_0)$ when at least one of $i$ or $j$ are equal to $n$. The proof is almost identical to that of Guan in \cite{Guan08}, but for the convenience of the reader we summarise the argument here. In what follows, all computations are carried out in a neighbourhood of $x_0$ on which the frame $\{e_1,\dots,e_n\}$ is defined.

	Still with the convention $g_u = e^{2u}g_0$, it will be convenient to write the equation in \eqref{12} in the equivalent form
	\begin{align}\label{417}
	f(\lambda(-g_0^{-1}A_{g_u}^\tau)) = \psi e^{2u}, \quad \lambda(-g_u^{-1}A_{g_u}^\tau)\in\Gamma  \quad \text{on }M\backslash \partial M,
	\end{align}
	where 
	\begin{align*}
	A_{g_u}^\tau & = \tau A_{g_u} + (1-\tau)\sigma_1(-g_u^{-1}A_{g_u})g_u \nonumber \\
	& = -\tau\nabla_{g_0}^2 u - (1-\tau)\Delta_{g_0} u\,g_0 - b_{n,\tau}|\nabla_{g_0} u|_{g_0}^2 g_0 + \tau du\otimes du + A_{g_0}^\tau
	\end{align*}
	and $b_{n,\tau} = \frac{1}{2}\big(n-2-(n-3)\tau\big)$. Denoting $F[u] = f(\lambda(-g_0^{-1}A_{g_u}^\tau))$ and 
	\begin{align*}
	F^{ij} = \frac{\partial f}{\partial A_{ij}}\bigg|_{A=-g_0^{-1}A_{g_u}^\tau},
	\end{align*}
	the linearisation of $F$ at $u$ in the direction $\eta$ (excluding zeroth order terms) is given by 
	\begin{align}\label{165}
	\mathcal{L}\eta & = F^{ij}\bigg(\tau(\nabla_{g_0}^2 \eta)_{ij} +(1-\tau)\Delta_{g_0} \eta \,(g_0)_{ij} + 2b_{n,\tau}\langle \nabla_{g_0} u,\nabla_{g_0} \eta\rangle_{g_0}(g_0)_{ij} - 2\tau \partial_i u\,\partial_j\eta\bigg) \nonumber \\
	& = F^{ij}\bigg(\tau(\nabla_{g_0}^2 \eta)_{ij} - 2\tau\partial_i u\,\partial_j\eta \bigg) + \bigg((1-\tau)\Delta_{g_0} \eta  + 2b_{n,\tau}\langle \nabla_{g_0} u,\nabla_{g_0} \eta\rangle_{g_0}\bigg)\sum_iF^{ii}.
	\end{align}

	 Now suppose $\delta>0$ is sufficiently small so that $d(x) = \operatorname{d}(x,\partial M)$ is smooth in $M_\delta= \{x\in M: d(x)<\delta\}$. For a positive constant $N$ to be determined later, define
	\begin{align}\label{82}
	v = \frac{N}{2}d^2 - d. 
	\end{align}
	A routine compute shows that for $\delta>0$ sufficiently small, 
	\begin{align}\label{80}
	|\mathcal{L}d| \leq C_0\sum_i F^{ii} \quad\text{in }M_\delta,
	\end{align}
	where $C_0$ is a constant independent of $\tau$ but depending on $g_0$ and an upper bound for $\|u\|_{C^1(M)}$. It follows that 
	\begin{align}\label{81}
		\mathcal{L}d^2 & = 2d \mathcal{L}d + 2(1-\tau)|\nabla_{g_0} d|_{g_0}^2\sum_i F^{ii} + 2F^{ij}\partial_i d\,\partial_j d \nonumber \\
	& \geq  2d \mathcal{L}d + 2(1-\tau)\sum_i F^{ii} \nonumber \\
	& \geq 2\big((1-\tau)-C_0d\big)\sum_i F^{ii} \quad \text{in }M_\delta.
	\end{align}
Choosing $N\geq \frac{4(1+C_0)}{1-\tau}$ and subsequently $\delta \leq \min\{N^{-1}, C_0^{-1}\}$, one sees from \eqref{80} and \eqref{81} that the function $v$ defined in \eqref{82} satisfies
	\begin{align}\label{152}
	\mathcal{L}v \geq \sum_i F^{ii} \quad \text{and} \quad 
	v \leq -\frac{d}{2}\quad\text{in }M_\delta. 
	\end{align}

	With \eqref{152} in hand, one can then show: 
	\begin{lem}\label{84}
		Fix $\delta>0$ sufficiently small as in the foregoing argument. If $h\in C^2(\overline{M_\delta})$ satisfies $h\leq 0$ on $\partial M$, $h(z_0) = 0$ for some $z_0\in \partial M$ and 
		\begin{align}\label{85}
		-\mathcal{L}h \leq C_1\sum_i F^{ii} \quad \text{in }M_\delta
		\end{align}
		for some constant $C_1$, then 
		\begin{align}\label{86}
		(\nabla_{g_0} h)_n(z_0) \leq C,
		\end{align}
		where $C$ is a constant depending on $g_0$, $C_1$, $(1-\tau)^{-1}$ and upper bounds for $\|h\|_{C^0(\overline{M_\delta})}$ and $\|u\|_{C^1(M)}$. 
	\end{lem}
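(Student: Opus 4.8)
The plan is to prove Lemma~\ref{84} by a comparison (barrier) argument built on the auxiliary function $v$ of \eqref{82}, using precisely the two facts collected in \eqref{152}: $\mathcal{L}v\ge\sum_iF^{ii}$ and $v\le-\tfrac{d}{2}$ in $M_\delta$. The guiding observation is that $-v$ vanishes on $\partial M$, is nonnegative on $\overline{M_\delta}$, satisfies $-v\ge\tfrac\delta2$ on the far boundary component $\{d=\delta\}$, and has interior-normal derivative $(\nabla_{g_0}(-v))_n=1$ on $\partial M$: indeed $\nabla_{g_0}v=(Nd-1)\nabla_{g_0}d$ equals $-\nabla_{g_0}d=-e_n$ where $d=0$. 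Hence a suitable multiple of $-v$ is the natural barrier that vanishes, together with its tangential derivatives, exactly at $z_0$; this is in the spirit of the argument of Guan~\cite{Guan08}.

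Concretely, I would set $A=\max\{0,\,C_1,\,2\delta^{-1}\|h\|_{C^0(\overline{M_\delta})}\}$, put $\Phi=-Av\in C^\infty(\overline{M_\delta})$, and consider $w=h-\Phi$. Writing hypothesis \eqref{85} as $\mathcal{L}h\ge-C_1\sum_iF^{ii}$ and noting that, by \eqref{152} and $A\ge C_1$, one has $\mathcal{L}\Phi=-A\mathcal{L}v\le-A\sum_iF^{ii}\le-C_1\sum_iF^{ii}$ (using $\sum_iF^{ii}>0$), we obtain $\mathcal{L}w\ge0$ in $M_\delta$. On $\partial M$ we have $w=h\le0$ since $\Phi=0$ there, and on $\partial M_\delta\setminus\partial M$ we have $w\le\|h\|_{C^0(\overline{M_\delta})}-\tfrac{A\delta}{2}\le0$. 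Since $\mathcal{L}$ is linear, has no zeroth-order term, and is strictly elliptic — which is clear from \eqref{165}, as $F^{ij}$ is positive definite and $1-\tau>0$ — the weak maximum principle gives $w\le0$ on $\overline{M_\delta}$. Because $h(z_0)=0$ and $v(z_0)=0$, we have $w(z_0)=0$, so $z_0$ is a maximum point of $w$ over $\overline{M_\delta}$, whence $(\nabla_{g_0}w)_n(z_0)\le0$, that is, $(\nabla_{g_0}h)_n(z_0)\le(\nabla_{g_0}\Phi)_n(z_0)=-A(\nabla_{g_0}v)_n(z_0)=A$. This is \eqref{86} with $C=A$, and the dependence of $A$ on $g_0$, $C_1$, $(1-\tau)^{-1}$ and upper bounds for $\|h\|_{C^0(\overline{M_\delta})}$ and $\|u\|_{C^1(M)}$ is exactly the asserted one, since $\delta$ was fixed in terms of $N\sim(1-\tau)^{-1}(1+C_0)$ with $C_0=C_0(g_0,\|u\|_{C^1(M)})$.

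This is essentially routine once \eqref{152} is in hand, so I do not expect a genuine obstacle. The only slightly delicate point is that a single constant $A$ must simultaneously dominate $\|h\|_{C^0(\overline{M_\delta})}$ on $\{d=\delta\}$ and absorb $C_1$ in the differential inequality; this works precisely because $\delta$, though small, is a fixed positive constant, which is what forces the (harmless but unavoidable) factor $2\delta^{-1}$ into the final constant $C$. One should also record that for $\delta$ small the distance function $d$ — hence $v$ and $\Phi$ — is smooth up to $\partial M$ on $\overline{M_\delta}$, so that $w\in C^2(\overline{M_\delta})$ and the maximum principle applies as stated.
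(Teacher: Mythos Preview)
Your proof is correct and follows essentially the same approach as the paper's: both arguments use $-Av$ (with $A$ large depending on $C_1$ and $\|h\|_{C^0(\overline{M_\delta})}$) as a barrier, apply the maximum principle to $h+Av$ (equivalently, the paper's $-Av-h$), and read off the normal derivative bound at $z_0$. Your version is slightly more explicit in specifying $A$ and in computing $(\nabla_{g_0}v)_n(z_0)=-1$, but the substance is identical.
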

\begin{proof}
	It is clear from the definition of $v$ that we can choose $A>0$ large (depending on $\|h\|_{C^0(\overline{M_\delta})})$ such that $-Av - h \geq 0$ on $\partial M_\delta$. On the other hand, using \eqref{152} and \eqref{85},  we have
	\begin{align*}
	\mathcal{L}(-Av - h)\leq  (-A+C_1)\sum_i F^{ii} \quad \text{in }M_\delta,
	\end{align*}
	and hence $\mathcal{L}(-Av - h) \leq 0$ in $M_\delta$ for $A$ sufficiently large. Thus, for $A$ sufficiently large the maximum principle yields $-Av - h \geq 0$ in $M_\delta$, and since $(-Av - h)(z_0) = 0$, it follows that $(\nabla_{g_0}(-Av - h))_n(z_0) \geq 0$, i.e.~$(\nabla_{g_0} h)_n(z_0) \leq -A(\nabla_{g_0} v)_n(z_0)$. The estimate \eqref{86} then follows. 
\end{proof}

We now continue the proof of Proposition \ref{83}. Suppose $i\in \{1,\dots,n-1\}$ and define $h=\pm(\nabla_{g_0}(u-\overline{\xi}))_i$, where (as in the proof of Proposition \ref{154}) $\overline{\xi}$ denotes the extension of $\xi$ to $M_\delta$ such that $\overline{\xi}$ is constant along geodesics normal to $\partial M$. By differentiating the equation \eqref{417}, one can show directly that $|\mathcal{L}(\nabla_{g_0} u)_i| \leq C\sum_i F^{ii}$, and by \eqref{183} we also have $|\mathcal{L}\overline{\xi}| \leq C \leq C\sum_i F^{ii}$. Therefore $h$ satisfies the assumptions of Lemma \ref{84}, and it follows from Lemma \ref{84} that
\begin{align*}
|(\nabla_{g_0}^2 u)_{in}(x_0)| \leq C.
\end{align*}

It remains to estimate the double normal derivative $(\nabla_{g_0}^2 u)_{nn}(x_0)$. Note that since $\{e_1,\dots,e_{n}\}$ is an orthonormal frame and $(\nabla_{g_0}^2 u)_{ii}(x_0) = (\nabla_{g_0}^2\xi)_{ii}(x_0)$ for $i\in\{1,\dots,n-1\}$, to obtain an upper (resp.~lower) bound for $(\nabla_{g_0}^2 u)_{nn}(x_0)$, it is equivalent to obtain an upper (resp.~lower) bound for $\Delta_{g_0} u(x_0)$. Now, since $\Gamma\subseteq \Gamma_1^+$, the lower bound $\Delta_{g_0} u \geq -C$ in $M$ is immediate. To obtain the upper bound for $(\nabla_{g_0}^2 u)_{nn}(x_0)$, we may assume $(\nabla_{g_0}^2 u)_{nn}(x_0) \geq 1$, otherwise we are done. We may also assume that with respect to the frame $\{e_1,\dots,e_n\}$, the Hessian of $u$ at $x_0$ is given by $\nabla_{g_0}^2 u(x_0) = \operatorname{diag}((\nabla_{g_0}^2 u)_{11}(x_0),\dots, (\nabla_{g_0}^2 u)_{nn}(x_0))$. Then by the equation \eqref{417}, monotonicity of $f$ and our estimates for $(\nabla_{g_0}^2 u)_{ij}(x_0)$ when $i$ and $j$ are not both equal to $n$, we have
\begin{align}\label{425}
\psi(x_0)e^{2u(x_0)} = f(-g_0^{-1}A_{g_u}^\tau(x_0)) \geq f\big((1-\tau)(\nabla_{g_0}^2 u)_{nn}(x_0)g_0 + B\big),
\end{align}
where $B$ is a symmetric matrix bounded in terms of $\|u\|_{C^1(M)}$. The upper bound for $(\nabla_{g_0}^2 u)_{nn}(x_0)$ then follows from \eqref{425} and homogeneity of $f$. 
\end{proof}

\subsection{Proof of Theorem \ref{55}}\label{150}

We now complete the proof of Theorem \ref{55}:

\begin{proof}[Proof of Theorem \ref{55}]
We first prove the existence of a smooth solution to \eqref{12} when $\tau<1$. Fix $\ep>0$ and let $S_\ep= \{\tau\in[0,1-\ep]\,:\, \eqref{12} \text{ admits a solution in }C^{2,\alpha}(M)\}$. Since \eqref{12} admits a unique smooth solution when $\tau=0$, $S_\ep$ is non-empty. A computation as in \eqref{165} (but now including zeroth order terms) shows that the linearised operator is invertible as a mapping from $C^{2,\alpha}(M)$ to $C^\alpha(M)$, from which openness of $S_\ep$ follows. By Propositions \ref{75} and \ref{151}, solutions to \eqref{12} admit a global $C^0$ estimate. By Proposition \ref{154}, solutions to \eqref{12} therefore admit a global $C^1$ estimate. Note that, at this point, the estimates are independent of $\ep$. By Proposition \ref{83}, one then obtains the global $C^2$ estimate on solutions to \eqref{12}, which do now depend on $\ep$. With the $C^2$ estimate established, \eqref{12} becomes uniformly elliptic, and the regularity theory of Evans-Kyrlov \cite{Ev82, Kry82, Kry83} then implies a $C^{2,\alpha}$ estimate. Thus $S_\ep$ is also closed, and so $S_\ep = [0,1-\ep]$. Since $\ep>0$ was arbitrary, existence of a $C^{2,\alpha}$ solution to \eqref{12} for any $\tau<1$ then follows. Higher regularity then follows from classical Schauder theory, and uniqueness is a consequence of the comparison principle in Proposition \ref{174}. 

Now, since the solutions obtained to \eqref{12} are uniformly bounded in $C^1(M)$ as $\tau\rightarrow 1$, along a sequence $\tau_i\rightarrow 1$ these solutions converge uniformly to some $u\in C^{0,1}(M)$. The proof that $u$ is a viscosity solution to \eqref{12} when $\tau=1$ is exactly the same as in the proof of Theorem 1.3 in \cite{LN20b}, and is omitted here.
\end{proof}

\section{Proof of Theorem \ref{A'}: the fully nonlinear Loewner-Nirenberg problem}\label{135}

In this section we prove Theorem \ref{A'}. Our proof proceeds according to the following steps: 

\begin{enumerate}
	\item In Section \ref{137} we construct a smooth solution to \eqref{-113} when $\tau<1$. The solution is obtained as the limit of solutions with constant finite boundary data $m\in\mathbb{R}$ (which we know to exist by Theorem \ref{55}) as $m\rightarrow\infty$. 
	\item In Section \ref{138} we prove that there exists a smooth solution $u$ to \eqref{-113} when $\tau<1$ satisfying the asymptotics stated in \eqref{114}. 
	\item In Section \ref{139} we prove that \textit{any} smooth solution to \eqref{-113} must satisfy \eqref{114} when $\tau<1$. When combined with the maximum principle, this will imply that the solution $u$ obtained to \eqref{-113} is unique when $\tau<1$. 
	\item In Section \ref{166} we complete the proof of Theorem \ref{A'}.
\end{enumerate}

\subsection{Existence of a smooth solution to \eqref{-113} when $\tau<1$}\label{137}

Fix $\tau<1$ and suppose that $(f,\Gamma)$ satisfies \eqref{21'}--\eqref{24'}, \eqref{25'} and \eqref{418}. By Theorem \ref{55}, we know that for each $m\in\mathbb{R}$, there exists a unique smooth solution $u_m$ to
\begin{equation}\label{13''}
\begin{cases}
f^\tau(\lambda(-g_{u_m}^{-1}A_{g_{u_m}})) = 1, \quad \lambda(-g_{u_m}^{-1}A_{g_{u_m}})\in\Gamma^\tau & \text{on }M\backslash\partial M \\
u_m = m & \text{on }\partial M.
\end{cases}
\end{equation}
In this section we show that in the limit $m\rightarrow\infty$, one obtains a smooth solution $u$ to \eqref{-113}.

\begin{prop}\label{90}
	Fix $\tau<1$ and suppose that $(f,\Gamma)$ satisfies \eqref{21'}--\eqref{24'}, \eqref{25'} and \eqref{418}. Let $u_m$ denote the unique smooth solution to \eqref{13''}. Then a subsequence of $\{u_m\}_m$ converges uniformly as $m\rightarrow \infty$ to a solution $u\in C^\infty(M\backslash\partial M)$ of \eqref{-113}. Moreover, given any constant $\alpha>0$, there exists a constant $\delta>0$ independent of $\tau$ but dependent on $g_0, \alpha, f$ and $\Gamma$ such that $u \geq \alpha$ in $M_\delta\backslash\partial M$. 
\end{prop}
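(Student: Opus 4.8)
The plan is to realise $u$ as the monotone limit of the $u_m$ as $m\to\infty$, the crux being a uniform-in-$m$ upper bound for $u_m$ on compact subsets of $M\backslash\partial M$. First, applying the comparison principle (Proposition \ref{174}) to $u_{m_1}$ and $u_{m_2}$ with $m_1<m_2$ (both solve the equation in \eqref{13''}, with $u_{m_1}=m_1<m_2=u_{m_2}$ on $\partial M$) shows that $\{u_m\}$ is nondecreasing in $m$; in particular $u_m\geq u_0\geq\inf_M u_0$ on $M$ for every $m\geq 0$, so a uniform lower bound is automatic. It therefore remains to bound $\sup_K u_m$ for each compact $K\subset M\backslash\partial M$ independently of $m$, and then to upgrade this to higher-order local estimates.

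For the upper bound I would construct, around each $x_0\in M\backslash\partial M$, a barrier on a small geodesic ball that blows up at its boundary. Writing conformal metrics as $g^v=v^{-2}g_0$ as in the proof of Proposition \ref{54} and using normal coordinates at $x_0$ with $r=\operatorname{d}_{g_0}(\cdot,x_0)$, take $v(r)=\frac{\rho^2-r^2}{2\rho}$ on $\mathbb{B}_\rho(x_0)$, so that $-\ln v\to+\infty$ as $r\to\rho$. In the Euclidean case $g^v$ is the hyperbolic metric of curvature $-1$, for which $\lambda(-(g^v)^{-1}A_{g^v})=(\tfrac12,\dots,\tfrac12)$ and $f^\tau=1$ by \eqref{25'}. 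For a general metric $g_0$, the expansion \eqref{50} of Appendix \ref{appb} gives $-(g^v)^{-1}A_{g^v}=\tfrac12 I+E$ with $\|E\|\leq C\rho^2$ throughout $\mathbb{B}_\rho(x_0)$; since $(\tfrac12,\dots,\tfrac12)$ lies strictly inside $\Gamma^\tau$, for $\rho$ small the eigenvalues still lie in $\Gamma^\tau$, and replacing $-\ln v$ by $\bar w\defeq-\ln v+K\rho^2$ with $K$ large (adding a positive constant strictly decreases $f^\tau$, as noted after Proposition \ref{174}) yields $\tfrac12\leq f^\tau(-g_{\bar w}^{-1}A_{g_{\bar w}})\leq 1$ on $\mathbb{B}_\rho(x_0)$, while still $\bar w\to+\infty$ as $r\to\rho$. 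Since $u_m-\bar w\to-\infty$ at $\partial\mathbb{B}_\rho(x_0)$, a variant of Proposition \ref{174} applies: if $\sup_{\mathbb{B}_\rho(x_0)}(u_m-\bar w)=c>0$ it is attained at an interior point $p$, so $\bar w+c$ touches $u_m$ from above at $p$, whence $f^\tau(-g_{\bar w+c}^{-1}A_{g_{\bar w+c}})(p)\geq 1$ since $u_m$ is a viscosity subsolution; but $f^\tau(-g_{\bar w+c}^{-1}A_{g_{\bar w+c}})(p)<f^\tau(-g_{\bar w}^{-1}A_{g_{\bar w}})(p)\leq 1$, a contradiction. Hence $u_m\leq\bar w$ on $\mathbb{B}_\rho(x_0)$, so centring the ball at the point of interest gives $u_m(x_0)\leq\bar w(x_0)=-\ln(\rho/2)+K\rho^2$, a bound uniform in $m$ and over $x_0$ in any compact $K\subset M\backslash\partial M$ (with $\rho=\rho(K)>0$). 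This step does not use $\mu_\Gamma^+>1$; the main point requiring care is this upper-barrier construction, the remaining steps being comparison-principle bookkeeping and standard elliptic regularity.

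With two-sided $C^0$ bounds on compact subsets of $M\backslash\partial M$, uniform in $m$, Theorem \ref{40} supplies interior gradient bounds, the interior $C^2$ estimate for \eqref{12} as in the proof of Proposition \ref{83} supplies interior Hessian bounds (here $\tau<1$ is fixed, so the degeneration in $\tau$ is irrelevant), and Evans--Krylov together with Schauder bootstrapping then give $C^\infty$ bounds on compact subsets of $M\backslash\partial M$. Since $\{u_m\}$ is monotone and locally bounded, $u\defeq\sup_m u_m=\lim_{m\to\infty}u_m$ and, by Arzel\`{a}--Ascoli, the convergence is in $C^\infty_{\mathrm{loc}}(M\backslash\partial M)$; thus $u\in C^\infty(M\backslash\partial M)$ solves $f^\tau(\lambda(-g_u^{-1}A_{g_u}))=1$, $\lambda(-g_u^{-1}A_{g_u})\in\Gamma^\tau$ on $M\backslash\partial M$. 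For the behaviour near $\partial M$, fix $\alpha>0$ and set $m_0=\alpha+1$: then $u_{m_0}$ is a smooth solution of the Dirichlet problem \eqref{12} with $\psi\equiv 1$ and $\xi\equiv m_0$, so Proposition \ref{41} provides $\delta>0$, independent of $\tau$ and depending only on $g_0,f,\Gamma$ and a lower bound for $m_0$ (hence only on $g_0,f,\Gamma,\alpha$), such that $u_{m_0}\geq\inf_{\partial M}\xi-1=\alpha$ in $M_\delta$. Since $u\geq u_{m_0}$ on $M\backslash\partial M$, we obtain $u\geq\alpha$ in $M_\delta\backslash\partial M$; applying this for every $\alpha>0$ also gives $u(x)\to+\infty$ as $\operatorname{d}(x,\partial M)\to 0$, so $u$ solves \eqref{-113}.
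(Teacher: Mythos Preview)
Your proof is correct. The monotonicity, global lower bound, higher-order local estimates, and the second assertion are handled essentially as in the paper (for the last point the paper appeals more vaguely to the global $C^0$ estimate for $u_{\alpha+1}$, whereas your direct invocation of Proposition \ref{41} with $\xi\equiv\alpha+1$ is the cleanest route to $u_{\alpha+1}\geq\alpha$ in $M_\delta$).

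The one genuine difference is in the local upper bound for $u_m$ independent of $m$. The paper observes from \eqref{98} that every $u_m$ satisfies $R_{g_{u_m}}\leq -n(n-1)$, and then invokes the Aviles--McOwen solution $w$ to \eqref{185} (which has $R_{g_w}=-n(n-1)$ and $w\to+\infty$ at $\partial M$) together with the comparison principle for the \emph{semilinear} scalar curvature equation to conclude $u_m\leq w$ on all of $M\backslash\partial M$ at once. You instead build a local hyperbolic-type barrier $\bar w$ on each small geodesic ball via the expansion \eqref{50}, shift by a constant to absorb the $O(\rho^2)$ error, and apply the fully nonlinear comparison principle directly. Your route is more self-contained---it avoids importing the Aviles--McOwen existence result and stays entirely within the fully nonlinear framework of the paper---while the paper's route is shorter, produces a single global barrier in one stroke, and exploits the reduction to the already-solved $\sigma_1$ problem.
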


\begin{proof} 
	Since the comparison principle in Proposition \ref{174} implies $u_{m+1} \geq u_m$, to prove the existence of a limit $u\in C^\infty(M\backslash \partial M)$ solving \eqref{-113}, it suffices to show that for each compact set $K\subset M\backslash \partial M$ there exists a constant $C$ independent of $m$ such that $\|u_m\|_{C^2(K)} \leq C$; higher order estimates then follow from the work of Evans-Krylov \cite{Ev82, Kry82} and classical Schauder theory. 
	
	The lower bound is trivial (and in fact global), since $u_m \geq u_1$ for all $m$. Next we address the local upper bound -- note that whilst we obtained a global upper bound in Proposition \ref{75}, the bound therein depends on $m$, which is insufficient for our current purposes. Recalling the normalisation $f(\frac{1}{2}, \dots, \frac{1}{2}) =1$, we have by concavity and homogeneity of $f$ 
	\begin{align}\label{98}
	f(\lambda) \leq f\bigg(\frac{\sigma_1(\lambda)}{n}e\bigg) + \nabla f\bigg(\frac{\sigma_1(\lambda)}{n}e\bigg)\cdot\bigg(\lambda - \frac{\sigma_1(\lambda)}{n}e\bigg) = \frac{f(e)}{n}\sigma_1(\lambda) = \frac{2}{n}\sigma_1(\lambda) \quad \text{for }\lambda\in\Gamma,
	\end{align} 
	and thus any solution to the equation in \eqref{13''} satisfies $R_{g_{u_m}} \leq -n(n-1)$. On the other hand, by the work of Aviles \& McOwen \cite{AM88}, there exists a smooth metric $g_w = e^{2w}g_0$ satisfying
	\begin{align}\label{185}
	\begin{cases}
	R_{g_{w}} = -n(n-1) & \text{on }M\backslash\partial M \\
	w(y)\rightarrow+\infty & \text{as } \operatorname{d}(y,\partial M)\rightarrow 0. 
	\end{cases}
	\end{align}
	By the comparison principle for the semilinear equation \eqref{185}, $u_m \leq w$ in $M\backslash\partial M$ for each $m$, which yields a finite upper bound for $u_m$ on any compact subset of $M\backslash\partial M$ which is independent of $m$. The local gradient estimate then follows from Theorem \ref{40}, or alternatively one can appeal to \cite[Theorem 2.1]{Guan08} since we have the two-sided $C^0$ bound at this point. For the local Hessian estimate, we appeal to \cite[Theorem 3.1]{Guan08}. We therefore obtain the full $C^2$ estimate $\|u_m\|_{C^2(K)} \leq C(K)$ on any compact set $K\subset M\backslash \partial M$, as required. 
	
	It remains to prove the second assertion in the statement of Proposition \ref{90}. Fix $\alpha>0$ and consider the solution $u_{\alpha+1}$ to \eqref{13''} with $m=\alpha+1$. Since $u_{\alpha+1}$ admits a global $C^0$ estimate depending only $g_0, \alpha, f$ and $\Gamma$, there exists a constant $\delta>0$ depending only on $g_0, \alpha, f$ and $\Gamma$ such that $u_{\alpha+1} \geq \alpha$ in $M_\delta$. By the comparison principle in Proposition \ref{174}, $u\geq u_{\alpha+1}$ in $M\backslash\partial M$, and in particular $u \geq \alpha$ in $M_\delta\backslash\partial M$, as required. 
\end{proof}

\subsection{Asymptotics}\label{138}

Fix $\tau<1$ and suppose that $(f,\Gamma)$ satisfies \eqref{21'}--\eqref{24'}, \eqref{25'} and \eqref{418}. In this section we show that there exists a smooth solution $u$ to \eqref{-113} satisfying \eqref{114}, that is
\begin{align}\label{114'}
\lim_{\operatorname{d}(x,\partial M)\rightarrow 0}\big(u(x) + \ln \operatorname{d}(x,\partial M)\big) = 0. 
\end{align}
\begin{rmk}
	At this point of the argument, we do not know that this constructed solution coincides with the one obtained in Section \ref{137}, although we will later see in Section \ref{139} that this is the case. 
\end{rmk}

We start by proving an upper bound on the growth of any smooth solution to the equation in \eqref{-113}, irrespective of the boundary data or whether $\tau<1$ or $\mu_\Gamma^+>1$:

\begin{prop}\label{101}
	Let $(M,g_0)$ be a smooth Riemannian manifold with non-empty boundary and suppose that $(f,\Gamma)$ satisfies \eqref{21'}--\eqref{24'} and \eqref{25'}. Then there exist constants $\delta>0$ and $C>0$ depending only on $g_0$ such that any continuous metric $g_u=e^{2u}g_0$ satisfying
	\begin{align}\label{102}
	f(\lambda(-g_u^{-1}A_{g_u})) \geq 1, \quad \lambda(-g_u^{-1}A_{g_u})\in\Gamma \quad   \text{in the viscosity sense on }M\backslash \partial M
	\end{align}
	satisfies
	\begin{align}\label{186}
	u(x) + \ln \operatorname{d}(x,\partial M) \leq C\operatorname{d}(x,\partial M)^{1/2} \quad \text{in }M_\delta\backslash \partial M. 
	\end{align}
	In particular, any continuous metric $g_u=e^{2u}g_0$ satisfying \eqref{102} satisfies
	\begin{align}\label{187}
	\limsup_{\operatorname{d}(x,\partial M)\rightarrow 0}\big(u(x) + \ln \operatorname{d}(x,\partial M)\big) \leq 0.
	\end{align}
	\end{prop}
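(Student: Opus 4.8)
The plan is to construct a supersolution on a tubular neighbourhood of $\partial M$ whose leading asymptotics match $-\ln \operatorname{d}(x,\partial M)$, with an error term of order $\operatorname{d}(x,\partial M)^{1/2}$, and then invoke the comparison principle (Proposition \ref{174}) to dominate $u$. The natural ansatz, modelled on the radial barriers of Proposition \ref{54} and on the exact Loewner--Nirenberg solution on a half-space, is
\begin{align*}
\overline{u}(x) = -\ln d(x) + C d(x)^{1/2},
\end{align*}
where $d(x) = \operatorname{d}(x,\partial M)$, which is smooth on $M_\delta$ for $\delta$ small with $|\nabla_{g_0} d|_{g_0} = 1$ and $|\Delta_{g_0} d| \leq C_0$ there. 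First I would compute $-g_0^{-1}A_{g_{\overline{u}}}$ using the conformal transformation law \eqref{118}: writing $\overline{u} = -\ln d + Cd^{1/2}$, the dominant contribution to $-\nabla^2_{g_0}\overline u - \tfrac12|\nabla_{g_0}\overline u|^2 g_0 + d\overline u\otimes d\overline u$ comes from the $-\ln d$ piece, which produces (in an orthonormal frame diagonalising $\nabla^2 d$) eigenvalues that are a perturbation of $\tfrac{1}{d^2}(\tfrac12,\dots,\tfrac12)$ — precisely the point $(1,\dots,1)\cdot\frac{1}{2d^2}$ on which $f$ is normalised by \eqref{25'} — plus lower-order terms of size $O(d^{-3/2})$ coming from the $Cd^{1/2}$ correction and from $\nabla^2 d$, $A_{g_0}$.

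The key computation is that the $Cd^{1/2}$ term, when differentiated twice, contributes a term of order $-C d^{-3/2}$ to the relevant eigenvalues with a favourable sign, which for $C$ large enough forces $\lambda(-g_0^{-1}A_{g_{\overline u}})$ to lie strictly inside $\Gamma$ and satisfy $f(\lambda(-g_0^{-1}A_{g_{\overline u}})) \geq 1$ throughout $M_\delta\backslash\partial M$, once $\delta$ is chosen small depending on $g_0$ (and hence on bounds for $\nabla^2 d$ and $A_{g_0}$). Concretely, after multiplying through by $e^{2\overline u} = d^{-2}e^{2Cd^{1/2}}$, one checks $f(\lambda(-g_{\overline u}^{-1}A_{g_{\overline u}})) = d^2 e^{-2Cd^{1/2}} f(\lambda(-g_0^{-1}A_{g_{\overline u}})) \geq d^2 e^{-2Cd^{1/2}}\cdot \tfrac{1}{2d^2}(1 - o(1)) \geq 1$ for $\delta$ small, using that $e^{-2Cd^{1/2}} \to 1$; the sign of the $Cd^{-3/2}$ correction is what buys the inequality. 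Then $\overline u = +\infty$ on $\partial M$ trivially dominates $u$ on $\partial M$ (in the appropriate limiting sense, after translating $\overline u$ up by a constant and letting it tend to $0$, or by working on the hypersurface $\{d = \epsilon\}$ and letting $\epsilon\to 0$), and $\overline u \geq u$ on the inner boundary $\partial M_\delta \setminus \partial M$ after adding a suitable constant, since $u$ is continuous hence bounded there. Proposition \ref{174} gives $u \leq \overline u$ on $M_\delta$, which is exactly \eqref{186}, and \eqref{187} follows by taking $\limsup$ as $d(x)\to 0$.

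The main obstacle I anticipate is the bookkeeping in the Hessian computation: one must track the error terms from $\nabla^2_{g_0} d$ (whose trace is controlled but which need not be small pointwise), from $A_{g_0}$, and from the cross terms in $d\overline u\otimes d\overline u$, and verify that all of these are genuinely lower order ($O(d^{-3/2})$ or better) compared to the $\tfrac{1}{2d^2}$ leading term, and that the single negative $O(d^{-3/2})$ term from the $Cd^{1/2}$ correction dominates them for $C$ large — this is where the exponent $1/2$ is forced (any smaller power would not dominate the $O(d^{-3/2})$ errors after rescaling, and a larger power would be too weak near $d=0$). A secondary technical point is making the boundary comparison on $\partial M$ rigorous despite $\overline u$ blowing up there; the clean fix is to apply the comparison principle on $M_\delta \setminus M_\epsilon$ for $0<\epsilon<\delta$, where $\overline u + c_\epsilon \geq u$ on both boundary components with $c_\epsilon \to 0$, and then let $\epsilon \to 0$.
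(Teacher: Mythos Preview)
Your proposal has a sign confusion and, more seriously, a structural gap.

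\textbf{Sign confusion.} For Proposition~\ref{174} to yield $u\leq\overline u$, you need $\overline u$ to be a \emph{supersolution}: $f(-g_{\overline u}^{-1}A_{g_{\overline u}})\leq 1$ with eigenvalues in $\Gamma$. You have written the reversed inequality throughout, and your own arithmetic gives $d^2 e^{-2Cd^{1/2}}\cdot\tfrac{1}{2d^2}(1-o(1))=\tfrac12 e^{-2Cd^{1/2}}(1-o(1))<1$, not $\geq 1$. In fact this points in the right direction, so the barrier may be salvageable once the inequalities are straightened out---but as written the argument is incoherent.

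\textbf{Structural gap.} The collar $M_\delta$ has an \emph{inner} boundary $\{d=\delta\}$ on which you need $\overline u\geq u$. You propose adding a constant; that preserves the supersolution property, but the constant depends on $\sup_{\{d=\delta\}}u$, hence on $u$ itself. The resulting bound $u+\ln d\leq Cd^{1/2}+c(u)$ does not give \eqref{186} with $C$ depending only on $g_0$, and it does not give \eqref{187} either (the $\limsup$ is only $\leq c(u)$). Your $\epsilon$-exhaustion handles the outer boundary $\partial M$, but on the fixed inner boundary $\{d=\delta\}$ the required constant does not shrink with $\epsilon$.

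The paper sidesteps both issues. First it uses $\Gamma\subset\Gamma_1^+$ and \eqref{98} to reduce to the semilinear inequality $\sigma_1(-g_u^{-1}A_{g_u})\geq \tfrac{n}{2}$. Then, following the original Loewner--Nirenberg device, for each $x_0$ near $\partial M$ it places an \emph{interior} geodesic ball $B_R(z_0)\subset M\setminus\partial M$ with $z_0$ on the normal geodesic through $x_0$ at distance $R>d(x_0)$ from $\partial M$, and compares with a radial supersolution $w(r)=-\ln(R^2-r^2)+h(R^2-r^2)+\ln(2R)$ (here $h(t)=\sqrt{t+\epsilon^2}-\epsilon$) which blows up on the \emph{entire} sphere $\partial B_R(z_0)$. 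The boundary inequality $u\leq w$ on $\partial B_R(z_0)$ is then automatic, with no constant depending on $u$; evaluating $w$ at $x_0$ (where $r=R-d$) produces the $\sqrt{d}$ error term. The idea you are missing is to use interior balls whose barrier blows up on the whole boundary, rather than a collar with an uncontrolled inner edge.
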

\begin{proof}
	By \eqref{98}, the comparison principle for viscosity sub-/supersolutions to uniformly elliptic equations implies that if $g_w = e^{2w}g_0$ satisfies
	\begin{align}\label{99}
	\begin{cases}
	\sigma_1(-g_w^{-1}A_{g_w}) \leq \frac{n}{2} & \text{in }\Omega\Subset M\backslash \partial M \\
	w(x) \rightarrow +\infty & \text{as } \operatorname{d}(x,\partial \Omega)\rightarrow 0,
	\end{cases}
	\end{align}
	then $u\leq w$ in $\Omega$. Since $\sigma_1(-g_w^{-1}A_{g_w}) = -\frac{1}{2(n-1)}R_{g_w}$, the transformation law for scalar curvature implies that the equation in \eqref{99} is equivalent to 
	\begin{align}\label{100}
	-\frac{S_{g_0}}{n-1} + 2\Delta_{g_0} w + (n-2)|\nabla_{g_0} w|_{g_0}^2 \leq n e^{2w}. 
	\end{align}
	We follow an argument of Gursky, Streets \& Warren \cite{GSW11}, in turn based on the original argument of Loewner \& Nirenberg \cite{LN74}, to construct such local supersolutions near $\partial M$. For a point $x_0$ distance $d$ from $\partial M$, consider a point $z_0$ a distance $R>d$ from $\partial M$, which lies along the shortest path geodesic from $x_0$ to $\partial M$. We may assume $R$ is small enough so that $\Delta_{g_0} d^2(z_0, \cdot) \geq 1$ on $B_R(z_0)$, and so that there exists a function $h$ defined on $[0,R^2]$ satisfying 
	\begin{align}\label{167}
	(n-2)(h')^2 + 2h'' \leq 0, \quad h' > \operatorname{max}_M |S_{g_0}| + \widetilde{C}(g_0), \quad h(0) = 0,
	\end{align}
	where $\widetilde{C}(g_0)$ is a sufficiently large constant to be fixed in the proof. Indeed, once $\widetilde{C}(g_0)$ is fixed, the function $h(t) = \sqrt{t+\ep^2} -\ep$ satisfies \eqref{167} for $\ep$ sufficiently small and $t$ in a sufficiently small interval $[0,R^2]$. 
	
	Let $r$ denote the distance from $z_0$, and define on $B_R(z_0)$ the radial function
	\begin{align*}
	w(r) = -\ln (R^2 - r^2) + h(R^2 - r^2) + \ln \alpha, 
	\end{align*}
	where $\alpha>0$ is to be determined. Exactly as in the proof of Lemma 5.2 in \cite{GSW11}, a direct computation shows that, for $R$ sufficiently small and $\widetilde{C}(g_0)$ sufficiently large, the LHS of \eqref{100} satisfies
	\begin{align}
	-\frac{S_{g_0}}{n-1} + 2\Delta_{g_0} w + (n-2)|\nabla_{g_0} w|_{g_0}^2  & \leq \frac{4nR^2}{(R^2 - r^2)^2}e^{2h} = \frac{4nR^2}{\alpha^2}e^{2w}.
	\end{align}
	Therefore, if we take $\alpha = 2R$, we see $w$ indeed satisfies \eqref{100}. We then obtain
	\begin{align*}
	u(x_0) & \leq w(x_0) \nonumber \\
	& = -\ln (R^2 - (R-d)^2) + h(R^2 - (R-d)^2) + \ln (2R) \nonumber \\
	& = -\ln (d(2R-d)) + h(d(2R-d)) + \ln (2R) \nonumber \\
	& = -\ln  d - \ln \bigg(1-\frac{d}{2R}\bigg) + h(d(2R-d)). 
	\end{align*}
	But $h(d(2R-d)) = \sqrt{d(2R-d) + \ep^2} - \ep \leq \sqrt{d(2R-d)} \leq C\sqrt{d}$ and $\ln(1-\frac{d}{2R}) \geq -\frac{d}{2R} \geq -C\sqrt{d}$ for sufficiently small $d$, and thus \eqref{186} follows. The inequality \eqref{187} is a clear consequence of \eqref{186}.
\end{proof}

We are now in a position to prove the existence of a smooth solution to \eqref{-113} when $\tau<1$ with the desired asymptotic behaviour in \eqref{114'}.

\begin{prop}\label{105}
	Fix $\tau<1$ and suppose that $(f,\Gamma)$ satisfies \eqref{21'}--\eqref{24'}, \eqref{25'} and \eqref{418}. Then there exists a smooth solution $g_v =e^{2v}g_0$ to \eqref{-113} and a constant $C$ independent of $\tau$ but dependent on $g_0, f$ and $\Gamma$ such that the following holds: for each $\ep>0$ sufficiently small, there exists a constant $a\gg0$ independent of $\tau$ but dependent on $g_0, \ep, C, f$ and $\Gamma$ such that 
	\begin{align}\label{184}
	v(x) + \ln \operatorname{d}(x,\partial M) \geq \ln\sqrt{1-2\ep} - \ln\big(1+a\operatorname{d}(x,\partial M)\big) \quad \text{in } A^a_\ep\subset M,
	\end{align}
	where 
	\begin{align*}
	A^a_\ep = \bigg\{x\in M\backslash\partial M: d(x)+ad(x)^2 \leq \frac{\ep}{C}\bigg\}.
	\end{align*}
	In particular, 
	\begin{align}\label{103}
	\lim_{\operatorname{d}(x,\partial M)\rightarrow 0}\big(v(x) + \ln \operatorname{d}(x,\partial M)\big)  =  0.
	\end{align}
\end{prop}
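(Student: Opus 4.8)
The plan is to take $v$ to be the solution furnished by Proposition \ref{90}, so that existence of a smooth solution $g_v=e^{2v}g_0$ of \eqref{-113} is already in hand, together with the facts that $v\to+\infty$ at $\partial M$, that $v\geq\alpha$ throughout a neighbourhood $M_{\delta(\alpha)}$ of $\partial M$ for every $\alpha>0$, and --- by Proposition \ref{101} --- that $\limsup_{\operatorname{d}(x,\partial M)\to0}\big(v+\ln\operatorname{d}(x,\partial M)\big)\leq0$. Once \eqref{184} is proved it gives $\liminf_{\operatorname{d}(x,\partial M)\to0}\big(v+\ln\operatorname{d}(x,\partial M)\big)\geq\ln\sqrt{1-2\ep}$ for every small $\ep>0$ (the term $\ln(1+a\operatorname{d})$ tends to $0$), hence $\liminf\geq0$, and combined with Proposition \ref{101} this is exactly \eqref{103}. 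So the real content is the lower bound \eqref{184}.

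The key step is to construct, near $\partial M$, a subsolution with precisely the asymptotics on the right-hand side of \eqref{184}. Write $d(x)=\operatorname{d}(x,\partial M)$, smooth with $|\nabla_{g_0}d|_{g_0}=1$ on $M_{\delta_0}$ for $\delta_0$ small, and set
\[
\phi(d)=\frac{d(1+ad)}{\sqrt{1-2\ep}},\qquad \underline v=-\ln\phi(d),\qquad g_{\underline v}=e^{2\underline v}g_0=\phi^{-2}g_0 .
\]
Computing $-g_{\underline v}^{-1}A_{g_{\underline v}}$ from the conformal transformation law for the Schouten tensor, exactly as in the proof of Proposition \ref{54} (cf.\ Appendix \ref{appb}), gives
\[
-g_{\underline v}^{-1}A_{g_{\underline v}}=\tfrac12(\phi')^2\,I-\phi\phi''\,P+E ,
\]
where $P$ is the rank-one orthogonal projection onto $\nabla_{g_0}d$ and $\|E\|_{g_0}\leq C_0\big(\phi\phi'+\phi^2\big)$ on $M_{\delta_0}$, with $C_0$ depending only on $g_0$ (through $\|\nabla^2_{g_0}d\|$ and $\|A_{g_0}\|$). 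The crucial algebraic fact is the identity $\tfrac12(\phi')^2-\phi\phi''\equiv\frac{1}{2(1-2\ep)}$, which holds because $(1+2ad)^2-4ad(1+ad)=1$; thus the smallest of the unperturbed eigenvalues equals $\frac{1}{2(1-2\ep)}>\tfrac12$, independently of $d$. I would then fix $C=C(g_0,f,\Gamma)$ large (in particular $C\gtrsim C_0$), and for each small $\ep>0$ take $a\asymp C/\ep$. On $A^a_\ep=\{d+ad^2\leq\ep/C\}$ one has $d\leq\ep/C$ and (since $(ad)^2\leq a\cdot ad^2\leq a\ep/C\asymp1$) also $ad=O(1)$, so $\phi\phi'+\phi^2=O(\ep/C)$ and hence $\|E\|_{g_0}\leq\frac{\ep}{2(1-2\ep)}$ there. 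Therefore $\lambda_{\min}(-g_{\underline v}^{-1}A_{g_{\underline v}})\geq\frac{1}{2(1-2\ep)}-\|E\|_{g_0}\geq\tfrac12$ on $A^a_\ep$, so $\lambda(-g_{\underline v}^{-1}A_{g_{\underline v}})\in\Gamma_n^+\subseteq\Gamma^\tau$, and by monotonicity of $f^\tau$ and the normalisation $f^\tau(\tfrac12,\dots,\tfrac12)=1$ we get $f^\tau(\lambda(-g_{\underline v}^{-1}A_{g_{\underline v}}))\geq1$. Thus $\underline v$ is a subsolution of the equation in \eqref{-113} on $A^a_\ep$, and $\underline v+\ln d\to\ln\sqrt{1-2\ep}$ as $d\to0$.

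It remains to prove $v\geq\underline v$ on $A^a_\ep$, which I would do via the Dirichlet approximations $u_m$ of Proposition \ref{90} (which increase to $v$). For $m$ large set $R_m=\{x\in A^a_\ep:\underline v(x)\leq m-1\}$, an annular region whose boundary consists of the outer level set $\Sigma=\{d+ad^2=\ep/C\}$, on which $\underline v\equiv\ln\frac{C\sqrt{1-2\ep}}{\ep}$, and the inner level set $\{\underline v=m-1\}$, which lies at distance $\asymp e^{-m}$ from $\partial M$. By Proposition \ref{41} --- applied with its free parameter chosen large, which only enlarges the resulting neighbourhood of $\partial M$ --- one has $u_m\geq m-1$ in a neighbourhood $M_{\delta_m}$ with $\delta_m\asymp e^{-m}$ large enough to contain $\{\underline v=m-1\}$; and since $u_m\to v$ uniformly on the compact set $\Sigma$ while $v>\ln\frac{C\sqrt{1-2\ep}}{\ep}$ there (using Proposition \ref{90}, and that $\ep/C$ lies below the relevant $\delta$), we get $u_m\geq\underline v$ on $\Sigma$ once $m\geq m_0(\ep)$. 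Hence $u_m\geq\underline v$ on $\partial R_m$, and since $f^\tau(-g_{u_m}^{-1}A_{g_{u_m}})=1$ while $f^\tau(-g_{\underline v}^{-1}A_{g_{\underline v}})\geq1$ on $R_m$, the comparison principle (Proposition \ref{174}, applied to $(f^\tau,\Gamma^\tau)$) gives $u_m\geq\underline v$ on $R_m$. Letting $m\to\infty$, the $R_m$ exhaust $A^a_\ep\setminus\partial M$ and $u_m\uparrow v$, so $v\geq\underline v$ on $A^a_\ep$, which is \eqref{184}.

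The main obstacle is coordinating the constants: one must choose $C$ (from the metric error $E$), then the free parameter in Proposition \ref{41} (so that the neighbourhoods $M_{\delta_m}$ reach past $\{\underline v=m-1\}$ and past $\Sigma$), then $\ep$ small, then $a\asymp C/\ep$, in such a way that $\underline v$ is a genuine subsolution on \emph{all} of $A^a_\ep$ (which rests on $ad$ staying bounded on $A^a_\ep$, i.e.\ on the particular shape of that region) and simultaneously the exhausting domains $R_m$ keep $\partial R_m$ inside the set where $u_m\geq\underline v$ is already known. The analytic ingredients --- the exact identity for $\tfrac12(\phi')^2-\phi\phi''$, the error bound on $E$, and the comparison principle --- are routine given the earlier sections; it is the mutual compatibility of these choices that requires care.
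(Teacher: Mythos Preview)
Your barrier $\underline v=-\ln\bigl(d(1+ad)/\sqrt{1-2\ep}\bigr)$ and the identity $\tfrac12(\phi')^2-\phi\phi''=\tfrac{1}{2(1-2\ep)}$ are exactly what the paper uses. The gap is in your error estimate. Writing $-g_{\underline v}^{-1}A_{g_{\underline v}}=\tfrac12(\phi')^2 I-\phi\phi''P+E$ with $\|E\|\leq C_0(\phi\phi'+\phi^2)$ forces you to control $\phi\phi'\asymp(d+ad^2)(1+2ad)$, hence to keep $ad$ bounded on $A^a_\ep$; this is where your constraint $a\lesssim C/\ep$ comes from. But on the outer boundary $\Sigma=\{d+ad^2=\ep/C\}$ you need $v>\ln(C\sqrt{1-2\ep}/\ep)$, and Proposition~\ref{90} only gives $v\geq\alpha$ on $M_{\delta(\alpha)}$ with no quantitative rate for $\delta(\alpha)$; to push $\Sigma$ inside $M_{\delta(\alpha)}$ you may need $a$ arbitrarily large. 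The circularity you flag at the end is therefore genuine, not just bookkeeping.

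The paper removes this tension by a sharper grouping (see \eqref{104}--\eqref{111}): the terms of size $2ad$ and $2a^2d^2$ in $-g_{\underline v}^{-1}A_{g_{\underline v}}$ come multiplied by $g_0-\nabla d\otimes\nabla d$ and $g_0-\nabla d\otimes\nabla d-d\nabla_{g_0}^2 d$, which are nonnegative definite near $\partial M$ and may be dropped. The residual error is then bounded by $\widehat C\,(d+4ad^2+2ad^3+a^2d^4)$, and since $ad^3=d\cdot ad^2$ and $a^2d^4=(ad^2)^2$, this is small on $A^a_\ep$ \emph{uniformly in $a$}. With the subsolution property thus decoupled from $a$, one is free to take $a$ as large as the outer boundary comparison requires.

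The paper also sidesteps your inner-boundary issue by a different approximation scheme: rather than the increasing Dirichlet solutions $u_m$ (with $u_m=m$ on $\partial M$), it takes an exhaustion $M_{(j)}\Subset M$ and the decreasing sequence $v_{(j)}$ solving the Loewner--Nirenberg problem on each $M_{(j)}$. Since $v_{(j)}=+\infty$ on $\partial M_{(j)}$, the comparison $v_{(j)}\geq\underline v$ is automatic there, and only $\Sigma$ needs checking. Your alternative via Proposition~\ref{41} to get $u_m\geq m-1$ on $\{\underline v=m-1\}$ is shaky: Proposition~\ref{41} as stated has no free parameter, and tracing its proof gives $u_m\geq m-1$ only on $M_{\delta_m}$ with $\delta_m\asymp c\,e^{-m}$ for an unspecified constant $c$, which need not contain $\{\underline v=m-1\}\subset\{d\approx e^{1-m}\}$.
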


\begin{proof}
	Consider an exhaustion of $M$ by smooth compact manifolds with boundary defined by $M_{(j)} = \{x\in M: \operatorname{d}(x,\partial M) \geq j^{-1}\}$. By Proposition \ref{90}, for each $j$ there exists a smooth solution $g_{v_{(j)}} = e^{2v_{(j)}}g_0$ to 
	\begin{align*}
	\begin{cases}
	f^\tau(-g_{v_{(j)}}^{-1}A_{g_{v_{(j)}}}) = 1, \quad \lambda(-g_{v_{(j)}}^{-1}A_{g_{v_{(j)}}})\in\Gamma^\tau & \text{on }M_{(j)}\backslash\partial M_{(j)} \\
	v_{(j)}(x)\rightarrow+\infty & \text{as }\operatorname{d}(x,\partial M_{(j)})\rightarrow 0. 
	\end{cases}
	\end{align*} 
	(Note that we put parentheses around the index $j$ to avoid confusion with the solutions $u_m$ to \eqref{13''}). Since $v_{(j)}(x)\rightarrow +\infty$ as $\operatorname{d}(x,\partial M_{(j)})\rightarrow 0$, the comparison principle in Proposition \ref{174} implies that if $j<m$, then
	\begin{align}\label{140}
	v_{(m)}\big|_{M_{(j)}} < v_{(j)}. 
	\end{align} 
	Now, as justified in the proof of Proposition \ref{90}, a subsequence of $\{v_{(j)}\}_j$ converges locally uniformly to some $v \in C^\infty(M\backslash\partial M)$. We claim that $v$ is our desired function. It is clear that $v$ solves the equation in \eqref{-113}. We now establish \eqref{184}, which we split into two steps: in the first step we show $v(x)\rightarrow+\infty$ as $\operatorname{d}(x,\partial M)\rightarrow 0$, and in the second step we prove \eqref{184}.\medskip
	
	\noindent\textbf{Step 1:} In this first step we show that $v(x)\rightarrow+\infty$ as $\operatorname{d}(x,\partial M)\rightarrow 0$. To this end, let $d(x) = \operatorname{d}(x,\partial M)$ and define $\phi = - \ln (B(d+ad^2))$, $g_{\phi} = e^{2\phi}g_0$, where $a$ and $B$ are positive constants to be determined. Writing $e^{2\phi} = \psi^{-2}$, so that $\psi = B(d+ad^2)$, we compute near $\partial M$
	\begin{align*}
	|\nabla_{g_0} \psi|_{g_0}^2 = B^2(1+2ad)^2 \quad \text{and} \quad  \nabla_{g_0}^2 \psi = B(1+2ad)\nabla_{g_0}^2 d + 2aB\,\nabla d \otimes \nabla d,
	\end{align*}
	where $\nabla d$ denotes the differential of $d$. It follows that near $\partial M$,
	\begin{align}\label{104}
	-g_\phi^{-1}A_{g_\phi} & = g_0^{-1}\bigg(-\psi\nabla_{g_0}^2 \psi + \frac{1}{2}|\nabla_{g_0} \psi|_{g_0}^2 g_0 - \psi^2 A_{g_0}\bigg) \nonumber \\
	& = B^2 g_0^{-1}\bigg(\frac{1}{2}g_0 + 2a^2d^2\Big[g_0 - \nabla d\otimes \nabla d - d\nabla_{g_0}^2d\Big] - d(1+3ad)\nabla_{g_0}^2 d \nonumber \\
	& \qquad \qquad + 2ad\Big[g_0 - \nabla d\otimes \nabla d\Big] - d^2(1+ad)^2A_{g_0}\bigg).
	\end{align}  
	Taking for instance $a=1$, we then see that for $\delta$ fixed sufficiently small and $B$ fixed sufficiently large, it holds that 
	\begin{align}\label{169}
	f^\tau(-g_\phi^{-1}A_{g_\phi}) \geq 1 \quad\text{in }M_\delta\backslash\partial M.
	\end{align}
	
	To use \eqref{169} to show $v(x)\rightarrow+\infty$ as $\operatorname{d}(x,\partial M)\rightarrow 0$, we follow the proof of \cite[Theorem 5]{LN74}. For $m\gg 1$, denote by $S_m$ the set where $\phi(x)= -\ln (B(d+d^2)) \geq m$. We may assume (by taking $m$ sufficiently large) that $S_m$ is a tubular neighbourhood of $\partial M$ contained in $M_\delta$. Let $\Sigma_m = \partial S_m \backslash\partial M$ and $D_m = \min_{\Sigma_m} v$, and suppose $J$ is sufficiently large so that $\Sigma_m \subset M_{(j)}$ for all $j\geq J$. Then $\phi =m$ and $v \geq D_m$ on $\Sigma_m$, and by the monotonicity in \eqref{140} we also have $v_{(j)} \geq D_m$ on $\Sigma_m$ for each $j\geq J$. Therefore
	\begin{align}\label{170}
	v_{(j)} + \operatorname{max}\{0, m-D_m\}\geq m = \phi \text{ on }\Sigma_m 
	\end{align}
	and
	\begin{align}\label{171}
	v_{(j)} + \operatorname{max}\{0, m-D_m\} = \infty > \phi \text{ on }\partial M_{(j)}. 
	\end{align}
	In light of \eqref{169}--\eqref{171}, the comparison principle in Proposition \ref{174} implies that $v_{(j)} + \operatorname{max}\{0, m-D_m\} \geq \phi$ on $M_{(j)} \cap S_m$. Sending $j\rightarrow\infty$, it follows that $v+ \operatorname{max}\{0, m-D_m\}\geq \phi$ in $S_m$, and in particular $v(x)\rightarrow+\infty$ as $\operatorname{d}(x,\partial M)\rightarrow 0$. \medskip

	\noindent\textbf{Step 2:} In this second step we show that $v$ satisfies \eqref{184}. The method is essentially a quantitative version of Step 1, requiring a more careful choice of parameters $a$ and $B$ in the definition of $\phi$.  
	
	We first claim that the two quantities in the square parentheses in \eqref{104} are nonnegative definite for sufficiently small $d$. Indeed, observe that $g_0(x) - \nabla d(x)\otimes \nabla d(x)$ is the induced metric on $\partial M_{d(x)}\backslash\partial M$, and is therefore nonnegative definite. Moreover, $\nabla_{g_0}^2 d$ is a bounded tensor near $\partial M$ whose kernel contains $\nabla d$. Hence $\nabla_{g_0}^2 d$ is bounded from above by $C(g_0 - \nabla d \otimes \nabla d)$ for some constant $C$ depending only on $(M,g_0)$. Therefore $g_0 - \nabla d\otimes \nabla d - d\nabla_{g_0}^2d$ is nonnegative definite for $d$ sufficiently small, as claimed.

	In light of \eqref{104} and the above claim, we see that for $\delta$ chosen sufficiently small independently of $a$ (but depending on $(M,g_0)$), and $\widehat{C}\geq 1$ a constant such that $|A_{g_0}|_{g_0}, |\nabla_{g_0}^2 d|_{g_0} \leq \widehat{C}$ on $M_\delta$, we have 
	\begin{align}\label{108}
	-g_\phi^{-1}A_{g_\phi} & \geq B^2 g_0^{-1}\bigg(\frac{1}{2}g_0 - d(1+3ad)\nabla_{g_0}^2 d - d^2(1+ad)^2A_{g_0} \bigg) \nonumber \\
	& \geq B^2 g_0^{-1}\bigg(\frac{1}{2} - \widehat{C}d - \widehat{C}(1+3a)d^2 - 2\widehat{C}ad^3 - \widehat{C}a^2 d^4\bigg)g_0 \quad \text{in }M_\delta\backslash\partial M.
	\end{align}
	Since we will eventually take $a$ large, we may assume $a\geq1$, in which case \eqref{108} implies
	\begin{align}\label{111}
	-g_\phi^{-1}A_{g_\phi} & \geq B^2 \bigg(\frac{1}{2} - \widehat{C}\Big[d + 4ad^2 +2ad^3 +a^2 d^4\Big]\bigg)\operatorname{Id} \quad \text{in } M_\delta\backslash\partial M.
	\end{align}
	
	Now fix $\ep>0$ small, define $B = \frac{1}{\sqrt{1-2\ep}}$ and denote by ${\widehat{A}}_\ep^{a}$ the set 
	\begin{align*}
	\widehat{A}_\ep^{a}  &= \bigg\{x\in M\backslash\partial M: \phi(x)= -\ln (B(d+ad^2)) \geq -\ln \bigg(\frac{\ep}{100\widehat{C}}\bigg) \bigg\} \nonumber \\
	& = \bigg\{x\in M\backslash\partial M: d+ad^2 \leq \frac{\ep\sqrt{1-2\ep}}{100\widehat{C}}\bigg\},
	\end{align*}
	where $\widehat{C}$ is the constant in \eqref{111}. It is easily verified that in $\widehat{A}^a_\ep$, we have $\widehat{C}(d + 4ad^2 +2ad^3 +a^2 d^4)\leq \ep$. Moreover, if we define 
	\begin{align*}
	\Sigma_\ep^{a} = \partial \widehat{A}_\ep^{a} \backslash \partial M,
	\end{align*}
	then $\Sigma_\ep^a$ converges to $\partial M$ as $a$ increases. It follows from these two facts and \eqref{111} that for $a$ sufficiently large (depending only on $(M,g_0)$),
	\begin{align*}
	-g_\phi^{-1}A_{g_\phi} & \geq B^2\operatorname{diag}\bigg(\frac{1}{2}-\ep, \dots, \frac{1}{2}-\ep\bigg) = \operatorname{diag}\bigg(\frac{1}{2},\dots,\frac{1}{2}\bigg) \quad \text{in }\widehat{A}^a_\ep. 
	\end{align*}
	It then follows from our normalisation $f(\frac{1}{2},\dots,\frac{1}{2})=1$ that 
	\begin{align}\label{110}
	f^\tau(-g_{\phi}^{-1}A_{g_\phi}) \geq 1\quad \text{in }\widehat{A}^a_\ep. 
	\end{align}

	We now let
	\begin{align*}
	\quad C_\ep^{a} = \min_{\Sigma_\ep^{a}} v. 
	\end{align*}
Since $v(x)\rightarrow +\infty$ as $\operatorname{d}(x,\partial M)\rightarrow 0$ (by Step 1), and since $\Sigma_\ep^a$ converges to $\partial M$ as $a$ increases, we can choose $a$ large enough so that $C_\ep^{a} \geq -\ln  (\frac{\ep}{100\widehat{C}})$. Moreover, this choice of $a$ depends only on $g_0, \ep, \widehat{C}, f$ and $\Gamma$: since each $v_{(j)}$ was constructed according to the procedure in the proof of Proposition \ref{90}, we know from the second statement in Proposition \ref{90} that there exists $\delta=\delta(g_0, \ep, \widehat{C}, f, \Gamma)>0$ such that $v_{(j)} \geq -\ln(\frac{\ep}{100\widehat{C}})$ in $(M_{(j)})_\delta\backslash\partial M_{(j)}$ for each $j$. Taking $j\rightarrow\infty$, we see $v \geq -\ln(\frac{\ep}{100\widehat{C}})$ in $M_\delta\backslash\partial M$. Therefore, to ensure $C^a_\ep \geq -\ln(\frac{\ep}{100\widehat{C}})$, one only needs to pick $a$ large depending on $\delta = \delta(g_0, \ep, \widehat{C}, f, \Gamma)$. 
	
	We now fix such a value of $a$ and suppose $J$ is sufficiently large so that $\Sigma^a_\ep \subset M_{(j)}$ for all $j\geq J$. Then $\phi =-\ln (\frac{\ep}{100\widehat{C}})$ and $v \geq C_\ep^{a}$ on $\Sigma_\ep^{a}$, and by the monotonicity in \eqref{140} we also have $v_{(j)} \geq C_\ep^{a}$ on $\Sigma_\ep^{a}$ for each $j\geq J$. Therefore, 
	\begin{align}\label{172}
	v_{(j)} \geq -\ln \bigg(\frac{\ep}{100\widehat{C}}\bigg) = \phi \text{ on }\Sigma_\ep^a
	\end{align}
	and
	\begin{align}\label{173}
	v_{(j)} = \infty > \phi \text{ on }\partial M_{(j)}. 
	\end{align}
	In light of \eqref{110}--\eqref{173}, the comparison principle in Proposition \ref{174} then yields
	\begin{align*}
	v_{(j)} \geq \phi \quad \text{in }\widehat{A}_\ep^{a} \cap M_{(j)}.
	\end{align*}
	Sending $j\rightarrow\infty$, it follows that $v\geq \phi$ in $\widehat{A}_\ep^a$, i.e. 
	\begin{align*}
	v \geq  \phi &  = - \ln (B(d+ad^2)) \nonumber \\
	&  =  \ln \sqrt{1-2\ep} - \ln  d - \ln (1+ad) \quad  \text{in }\widehat{A}_\ep^{a}.
	\end{align*}
	This is precisely \eqref{184} after relabelling constants, and thus the second step is complete. \medskip 
	
	To complete the proof of the proposition, we observe that \eqref{184} implies
	\begin{align*}
	\liminf_{\operatorname{d}(x, \partial M)\rightarrow 0}\big(v(x) + \ln \operatorname{d}(x,\partial M)\big) & \geq \ln \sqrt{1-2\ep}, 
	\end{align*}
	and since $\ep>0$ is arbitrary, it follows that
	\begin{align}\label{115}
	\liminf_{\operatorname{d}(x,\partial M)\rightarrow 0}\big(v(x) + \ln \operatorname{d}(x,\partial M)\big)  \geq  0.
	\end{align}
	By \eqref{115} and Proposition \ref{101}, we therefore see that $v$ satisfies \eqref{103}. 
\end{proof}
	
	\subsection{Uniqueness}\label{139}
	
	Having established the existence of a smooth solution to \eqref{-113} satisfying \eqref{114'} when $\tau<1$ and $\mu_{\Gamma}^+>1$ in the previous section, we now turn to uniqueness of solutions. We first show:

	\begin{prop}\label{144}
		Fix $\tau<1$ and suppose that $(f,\Gamma)$ satisfies \eqref{21'}--\eqref{24'}, \eqref{25'} and \eqref{418}. Then any continuous viscosity solution $g_u = e^{2u}g_0$ to \eqref{-113} satisfies \eqref{114'}. 
	\end{prop}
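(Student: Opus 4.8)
The plan is to establish \eqref{114'} by proving the two one‑sided estimates $\limsup_{\operatorname{d}(x,\partial M)\to 0}\big(u(x)+\ln\operatorname{d}(x,\partial M)\big)\le 0$ and $\liminf_{\operatorname{d}(x,\partial M)\to 0}\big(u(x)+\ln\operatorname{d}(x,\partial M)\big)\ge 0$ separately. The upper estimate is immediate from Proposition \ref{101}: a viscosity solution of \eqref{-113} in particular satisfies $f^\tau(\lambda(-g_u^{-1}A_{g_u}))\ge 1$, $\lambda(-g_u^{-1}A_{g_u})\in\Gamma^\tau$ in the viscosity sense, and since $\Gamma^\tau$ satisfies \eqref{21'}--\eqref{22'}, $f^\tau$ satisfies \eqref{23'}--\eqref{24'}, and $f^\tau(\tfrac12,\dots,\tfrac12)=1$, we may apply Proposition \ref{101} with $(f^\tau,\Gamma^\tau)$ in place of $(f,\Gamma)$. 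This gives $u+\ln\operatorname{d}(\cdot,\partial M)\le C\operatorname{d}(\cdot,\partial M)^{1/2}$ near $\partial M$, hence $\limsup(u+\ln\operatorname{d})\le 0$.

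For the lower estimate the first step is to show that $u$ blows up at $\partial M$ at least as fast as the solutions constructed in Section \ref{137}. For each $m$ let $u_m$ be the solution of \eqref{13''}. For $s>0$ small, the compact manifold $\{\operatorname{d}(\cdot,\partial M)\ge s\}$ has boundary $\{\operatorname{d}(\cdot,\partial M)=s\}$, on which $u\ge u_m$ once $s$ is small enough (because $u\to+\infty$ near $\partial M$ while each $u_m$ is globally bounded by Propositions \ref{75} and \ref{151}); Proposition \ref{174} then gives $u\ge u_m$ on $\{\operatorname{d}(\cdot,\partial M)\ge s\}$, and letting $s\to 0$ we obtain $u\ge u_m$ on $M\setminus\partial M$ for every $m$, hence $u\ge u_0:=\lim_m u_m$. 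By the second assertion of Proposition \ref{90}, $u(x)\to+\infty$ as $\operatorname{d}(x,\partial M)\to 0$; moreover, combining $u\ge u_m$ with the annular comparison functions of Proposition \ref{54} built on annuli whose inner sphere touches $\partial M$ (taking $m$ comparable to $-\ln\operatorname{d}(\cdot,\partial M)$ and optimising the parameter $\epsilon$ in \eqref{129}) yields a preliminary quantitative lower bound $u(x)+\ln\operatorname{d}(x,\partial M)\ge -K$ in a fixed neighbourhood of $\partial M$.

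The second step upgrades this to the sharp lower bound by re‑running Step 2 of the proof of Proposition \ref{105} with $u$ itself in place of the exhausting solutions $v_{(j)}$. Fix $\epsilon>0$ small and consider $\phi=-\ln\!\big(B(\operatorname{d}+a\operatorname{d}^2)\big)$ with $B$ slightly larger than $1/\sqrt{1-2\epsilon}$; the computation leading to \eqref{110} shows that, for $a$ large, $\phi$ is a strict subsolution, $f^\tau(\lambda(-g_\phi^{-1}A_{g_\phi}))>1$ with $\lambda(-g_\phi^{-1}A_{g_\phi})\in\Gamma^\tau$, on the collar $\widehat A_\epsilon^a=\{\operatorname{d}(\cdot,\partial M)+a\operatorname{d}(\cdot,\partial M)^2\le c_\epsilon\}$ (notation as in the proof of Proposition \ref{105}). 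Since $\phi$ is constant on the outer face $\Sigma_\epsilon^a$ of $\widehat A_\epsilon^a$ while $u\to+\infty$ at $\partial M$ and $\Sigma_\epsilon^a$ converges to $\partial M$ as $a\to\infty$, we have $\phi\le u$ on $\Sigma_\epsilon^a$ for $a$ large; applying Proposition \ref{174} on $\widehat A_\epsilon^a\cap M_{(j)}$ — using the preliminary bound $u+\ln\operatorname{d}\ge -K$ to compare $\phi$ with $u$ on the inner face $\partial M_{(j)}$ — and letting $j\to\infty$ gives $\phi\le u$ on $\widehat A_\epsilon^a$, i.e. $u(x)+\ln\operatorname{d}(x,\partial M)\ge -\ln\!\big(B(1+a\operatorname{d}(x,\partial M))\big)$, so $\liminf(u+\ln\operatorname{d})\ge -\ln B$. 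Letting $B\downarrow 1/\sqrt{1-2\epsilon}$ and then $\epsilon\downarrow 0$ completes the proof.

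I expect the main obstacle to be precisely the comparison near $\partial M$ in the second step. In Proposition \ref{105} the auxiliary solutions $v_{(j)}$ blow up on the interior hypersurface $\partial M_{(j)}$, so the comparison there is automatic; here $u$ and $\phi$ both blow up on $\partial M$ itself, and one cannot directly apply the comparison principle on $\widehat A_\epsilon^a$. The device that makes it work is the combination of $u\to+\infty$ at $\partial M$ (from $u\ge u_0$) to control the outer face $\Sigma_\epsilon^a$, and the quantitative lower bound coming from the annular barriers of Proposition \ref{54} to control the inner face $\partial M_{(j)}$ uniformly in $j$. All other ingredients — the comparison principle (Proposition \ref{174}), Proposition \ref{101}, the explicit subsolutions \eqref{129}, and Propositions \ref{90} and \ref{54} — are already in place.
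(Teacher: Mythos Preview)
Your upper bound via Proposition \ref{101} is fine and matches the paper. The lower bound, however, has a genuine gap at exactly the point you flag as the obstacle. On the inner face $\partial M_{(j)}=\{d=j^{-1}\}$ one has $\phi+\ln d=-\ln B-\ln(1+aj^{-1})\to-\ln B$ as $j\to\infty$, with $B=(1-2\epsilon)^{-1/2}$ close to $1$. Thus to get $u\ge\phi$ there you need $u+\ln d\ge -\ln B$ near $\partial M$, which is essentially the sharp statement you are trying to prove; a preliminary bound $u+\ln d\ge -K$ with $K$ merely finite is not enough, and raising $B$ to $e^K$ only returns the same preliminary bound. The argument is circular, and the annular barriers of Proposition \ref{54} do not break the circle: they produce lower bounds of the form $u\ge w$ with $w$ bounded above on each fixed annulus, not an estimate $u+\ln d\ge -o(1)$.

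The paper avoids this by moving the boundary outward rather than inward. It attaches a collar $N$ to $\partial M$, defines $M^{(j)}=\{x\in M\cup N:\operatorname{d}(x,M)\le j^{-1}\}\supset M$, and invokes Proposition \ref{105} on each $M^{(j)}$ to produce smooth solutions $u^{(j)}$ satisfying the sharp lower bound \eqref{184} with constants $\delta,a$ depending on $g_0,\epsilon,f,\Gamma$ but \emph{not} on $j$. Since $u^{(j)}$ is smooth across $\partial M$ while $u\to+\infty$ there, the comparison $u^{(j)}|_M\le u$ is immediate (run Proposition \ref{174} on $\{d\ge s\}$ and let $s\to 0$). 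Writing $\operatorname{d}(x,\partial M^{(j)})=\operatorname{d}(x,\partial M)+j^{-1}$ and sending $j\to\infty$ then gives $u(x)+\ln\operatorname{d}(x,\partial M)\ge\ln\sqrt{1-2\epsilon}-\ln(1+a\operatorname{d}(x,\partial M))$, and \eqref{142} follows. The key idea you are missing is this enlargement: by pushing $\partial M^{(j)}$ \emph{outside} $M$, the comparison with $u$ becomes trivial and the uniform quantitative asymptotics of Proposition \ref{105} transfer directly.
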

	\begin{proof}
		Let $u$ be a continuous viscosity solution to \eqref{-113}. By Proposition \ref{101}, we know that $u$ satisfies $\limsup_{\operatorname{d}(x,\partial M)\rightarrow 0}(u(x) + \ln \operatorname{d}(x,\partial M)) \leq 0$, so it remains to show
		\begin{align}\label{142}
		\liminf_{\operatorname{d}(x,\partial M)\rightarrow 0}\big(u(x) + \ln \operatorname{d}(x,\partial M)\big) \geq 0.
		\end{align}
		
		To prove \eqref{142}, we attach a collar neighbourhood $N$ to $\partial M$, extend $g_0$ smoothly to $M \cup N$ and consider the sequence $\{M^{(j)}\}_j$ of smooth compact manifolds with boundary given by $M^{(j)} = \{x\in M\cup N:\operatorname{d}(x,M)\leq j^{-1}\}$. Note that for $x\in M$ and $j$ sufficiently large, $\operatorname{d}(x,\partial M^{(j)}) = d(x,\partial M) + j^{-1}$. Fix $\ep>0$. By Proposition \ref{105}, there exist constants $\delta>0$ and $a>0$ depending on $g_0, \ep, f,\Gamma$ but independent of $j$, and a smooth metric $g_{u^{(j)}} = e^{2u^{(j)}}g_0$ for each $j$, such that
		\begin{align*}
		f^\tau(-g_{u^{(j)}}^{-1}A_{g_{u^{(j)}}}) = 1, \quad \lambda(-g_{u^{(j)}}A_{g_{u^{(j)}}})\in\Gamma^\tau \quad \text{on }M^{(j)}\backslash \partial M^{(j)}
		\end{align*}
		and
		\begin{align*}
		u^{(j)}(x) + \ln \operatorname{d}(x,\partial M^{(j)}) \geq \ln\sqrt{1-2\ep} -\ln \big(1+a\operatorname{d}(x,\partial M^{(j)})\big) \quad \text{in }(M^{(j)})_\delta \backslash\partial M^{(j)}.
		\end{align*}
		In particular, for $j$ sufficiently large so that $(M^{(j)})_\delta\cap M\not=\emptyset$, we have
		\begin{align}\label{420}
		u^{(j)}(x) + \ln \bigg(\operatorname{d}(x,\partial M) + \frac{1}{j}\bigg) \geq \ln\sqrt{1-2\ep} -\ln \bigg(1+a\operatorname{d}(x,\partial M) + \frac{a}{j}\bigg) \quad \text{in }M_{\delta-\frac{1}{j}}. 
		\end{align}
		
		Now, by the comparison principle in Proposition \ref{174}, $u^{(j)}|_M \leq u$ for each $j$, and thus \eqref{420} implies 
		\begin{align}\label{421}
		u(x) + \ln \bigg(\operatorname{d}(x,\partial M) + \frac{1}{j}\bigg) \geq \ln\sqrt{1-2\ep} -\ln \bigg(1+a\operatorname{d}(x,\partial M) + \frac{a}{j}\bigg) \quad \text{in }M_{\delta-\frac{1}{j}}\backslash\partial M.
		\end{align}
		After taking $j\rightarrow \infty$ in \eqref{421}, it follows that 
		\begin{align*}
		\liminf_{\operatorname{d}(x,\partial M)\rightarrow 0}\big(u(x) + \ln \operatorname{d}(x,\partial M)\big) \geq \ln\sqrt{1-2\ep},
		\end{align*}
		and since $\ep>0$ is arbitrary, we obtain \eqref{142}. 
	\end{proof}

	Finally we prove uniqueness of solutions to \eqref{-113} when $\tau<1$:

	\begin{prop}\label{422}
		Fix $\tau<1$, suppose that $(f,\Gamma)$ satisfies \eqref{21'}--\eqref{24'}, \eqref{25'} and \eqref{418}, and let $v$ denote the smooth solution to \eqref{-113} obtained in Proposition \ref{105}. Then $v$ is the unique continuous viscosity solution to \eqref{-113}.
	\end{prop}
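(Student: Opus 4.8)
The plan is to combine the matching boundary asymptotics established earlier with the elementary fact that a constant conformal change multiplies the quantity $f^\tau(\lambda(-g^{-1}A_g))$ by a positive constant. Let $u$ be an arbitrary continuous viscosity solution of \eqref{-113}. By Proposition \ref{144} the function $u$ satisfies \eqref{114'}, and by Proposition \ref{105} (namely \eqref{103}) so does $v$; hence $v-u$, which is continuous on $M\backslash\partial M$, extends continuously to all of the compact manifold $M$ with $(v-u)|_{\partial M}=0$. I will prove separately that $v\leq u$ and $u\leq v$ on $M\backslash\partial M$.

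For the inequality $v\leq u$, I would argue by contradiction, assuming $m:=\sup_{M\backslash\partial M}(v-u)>0$. Since $v-u$ extends continuously to $M$ and vanishes on $\partial M$, this supremum is attained at an \emph{interior} point $x_*\in M\backslash\partial M$. Then $\phi:=v-m\in C^\infty(M\backslash\partial M)$ satisfies $\phi(x_*)=u(x_*)$ and $\phi\leq u$ on $M\backslash\partial M$, so $\phi$ touches $u$ from below at $x_*$; the viscosity supersolution property of $u$ (for \eqref{-113}, i.e.\ with $f^\tau,\Gamma^\tau$ in place of $f,\Gamma$) then gives $\lambda(-g_\phi^{-1}A_{g_\phi})(x_*)\notin\{\lambda\in\Gamma^\tau:f^\tau(\lambda)>1\}$. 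On the other hand $\phi$ differs from $v$ by the constant $-m$, so $g_\phi=e^{-2m}g_v$; since the $(0,2)$-Schouten tensor is invariant under homothety, $A_{g_\phi}=A_{g_v}$, whence $-g_\phi^{-1}A_{g_\phi}=e^{2m}(-g_v^{-1}A_{g_v})$. As $\Gamma^\tau$ is a cone and $f^\tau$ is $1$-homogeneous, this forces $\lambda(-g_\phi^{-1}A_{g_\phi})(x_*)\in\Gamma^\tau$ and $f^\tau(\lambda(-g_\phi^{-1}A_{g_\phi}))(x_*)=e^{2m}f^\tau(\lambda(-g_v^{-1}A_{g_v}))(x_*)=e^{2m}>1$, a contradiction. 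Hence $v\leq u$.

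The reverse inequality $u\leq v$ follows from the mirror-image argument: if $m':=\sup_{M\backslash\partial M}(u-v)>0$ it is attained at an interior point $x_{**}$, the smooth function $\psi:=v+m'$ touches $u$ from above there, and the viscosity subsolution property forces $\lambda(-g_\psi^{-1}A_{g_\psi})(x_{**})\in\{\lambda\in\Gamma^\tau:f^\tau(\lambda)\geq 1\}$; but the same constant-rescaling computation gives $\lambda(-g_\psi^{-1}A_{g_\psi})(x_{**})\in\Gamma^\tau$ with $f^\tau(\lambda(-g_\psi^{-1}A_{g_\psi}))(x_{**})=e^{-2m'}<1$, again a contradiction. (Alternatively, $u\leq v$ can be read off directly from the construction in Proposition \ref{105}: applying the comparison principle Proposition \ref{174} on each $M_{(j)}$, where $u$ is finite on $\partial M_{(j)}$ while $v_{(j)}\to+\infty$, yields $u\leq v_{(j)}$ on $M_{(j)}$, and letting $j\to\infty$ gives $u\leq v$.) Combining the two inequalities gives $u\equiv v$, proving the proposition. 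I do not expect a serious obstacle here: the substantive work has already been done in Proposition \ref{144}, and the only point requiring care is that the sharp asymptotics are exactly what guarantees that $\sup(v-u)$ and $\sup(u-v)$ are attained at interior points, which is what makes the globally defined smooth test functions $v\mp m$ admissible in the viscosity formulation.
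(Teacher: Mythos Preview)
Your proof is correct and follows essentially the same idea as the paper: both rely on Proposition~\ref{144} to guarantee that $v-u\to 0$ at $\partial M$, and both exploit that a positive constant shift turns a solution into a strict super- or subsolution. The only cosmetic difference is that the paper packages the maximum-point argument by invoking the comparison principle (Proposition~\ref{174}) on the compact subdomains $M\setminus M_{\delta_\ep}$ with an $\ep$-shift, whereas you unpack that comparison argument directly at the interior maximum using the viscosity definition; your presentation is slightly cleaner in that the continuous extension of $v-u$ to $\overline{M}$ avoids the auxiliary $\delta_\ep\to 0$ step.
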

	\begin{proof}
		Suppose that $w$ is a continuous viscosity solution to \eqref{-113}. By Proposition \ref{144}, both $v$ and $w$ satisfy \eqref{114'}. For $\delta \geq 0$, define $\Sigma_\delta = \{d=\delta\}$. Then for each $\epsilon>0$, there exists a minimal $\delta_\ep> 0$ such that $w \leq v + \ep$ on $\Sigma_{\delta_\ep}$. Denoting $v_\ep = v+\ep$, we have
		\begin{align*}
		f^\tau(-g_0^{-1}A_{g_{v_\ep}}) = f^\tau(-g_0^{-1}A_{g_v}) = e^{2v}< e^{2v_\ep}
		\end{align*}
		and thus $v_\ep$ is a supersolution the equation in \eqref{-113}. By the comparison principle in Proposition \ref{174}, it follows that $w \leq v+\ep$ on $M\backslash M_{\delta_\ep}$. By minimality of $\delta_\ep$, we have $\delta_\ep\rightarrow 0$ as $\ep\rightarrow 0$, thus $w \leq v$ on $M\backslash\partial M$. Reversing the roles of $w$ and $v$, we see also that $w \geq v$ on $M\backslash\partial M$, and therefore $w=v$. 
	\end{proof}

\subsection{Proof of Theorem \ref{A'}}\label{166}

In this final section we complete the proof of Theorem \ref{A'}:

\begin{proof}[Proof of Theorem \ref{A'}]
	The existence of a smooth solution to \eqref{-113} for each $\tau<1$, the asymptotic behaviour stated in \eqref{114} and uniqueness in the class of continuous viscosity solutions follow from Propositions \ref{105} and \ref{422}. Let us denote these solutions by $u^\tau$. As observed previously, these solutions $u^\tau$ satisfy a locally uniform $C^1$ estimate which is independent of $\tau$, i.e.~for each compact set $K\subset M\backslash \partial M$, there exists a constant $C$ independent of $\tau$ but dependent on $g_0, f, \Gamma$ and $K$ such that
\begin{align*}
\|u^\tau\|_{C^1(K)} \leq C.
\end{align*}
It follows that a subsequence of $\{u^\tau\}$ converges locally uniformly in $C^{0,\alpha}$ to some $u\in C_{\operatorname{loc}}^{0,1}(M,g_0)$ for each $\alpha\in(0,1)$. As noted in the proof of Theorem \ref{55} in Section \ref{150}, the fact that $u$ is a viscosity solution to \eqref{-113} when $\tau=1$ follows from exactly the same argument as in the proof of \cite[Theorem 1.4]{LN20b}. So it remains to show that $u$ satisfies the asymptotics in \eqref{114} and is maximal. 

To this end, first note that since we only require $u$ to be a viscosity subsolution in Proposition \ref{101}, we have
\begin{align}\label{136}
\limsup_{\operatorname{d}(x,\partial M)\rightarrow 0}\big(u(x) + \ln \operatorname{d}(x,\partial M)\big) \leq 0.
\end{align}
To show that
\begin{align}\label{155}
\liminf_{\operatorname{d}(x,\partial M)\rightarrow 0}\big(u(x) + \ln \operatorname{d}(x,\partial M)\big) \geq 0,
\end{align}
we first recall that $u$ is the $C^{0,\alpha}$ limit of the solutions $u^\tau$ as $\tau\rightarrow 1$. By Proposition \ref{105}, for each $\ep>0$ sufficiently small, there exist constants $\delta>0$ and $a>0$ independent of $\tau$ (but dependent on $g_0, \ep, f$ and $\Gamma$) such that 
\begin{align}\label{409}
u^\tau(x) + \ln \operatorname{d}(x,\partial M) \geq \ln \sqrt{1-2\ep} - \ln \big(1+a\operatorname{d}(x,\partial M)\big) \quad \text{in }M_\delta\backslash\partial M. 
\end{align}
Taking $\tau\rightarrow 1$ in \eqref{409}, we obtain 
\begin{align*}
u(x) + \ln \operatorname{d}(x,\partial M) \geq \ln \sqrt{1-2\ep} - \ln \big(1+a\operatorname{d}(x,\partial M)\big) \quad \text{in }M_\delta\backslash\partial M,
\end{align*}
and \eqref{155} then follows exactly as in the proof of Proposition \ref{105}.

Finally, to see that $u$ is maximal, suppose that $\widetilde{u}$ is another continuous viscosity solution to \eqref{-113}. By Proposition \ref{101}, \eqref{136} holds with $\widetilde{u}$ in place of $u$, and we also know that \eqref{114} is satisfied with $u^\tau$ in place of $u$ for each $\tau\leq 1$. Combining these facts, it follows that for each $\tau\leq 1$ and $\ep>0$, there exists $\delta>0$ such that $\widetilde{u} \leq u^\tau_\ep \defeq u^\tau + \ep$ in $M_\delta\backslash\partial M$. On the other hand, $f^\tau(-g_{u^\tau_\ep}^{-1}A_{g_{u^\tau_\ep}}) = e^{-2\ep} f^\tau(-g_{u^\tau}^{-1}A_{g_{u^\tau}})<1$ on $M\backslash\partial M$ and $f^{\tau}(-g_{\tilde{u}}^{-1}A_{g_{\tilde{u}}}) \geq 1$ in the viscosity sense on $M\backslash\partial M$; to see this latter inequality, observe
\begin{align*}
f^\tau(\lambda) & = \frac{1}{\tau+n(1-\tau)}f\big(\tau\lambda + (1-\tau)\sigma_1(\lambda)e\big) \nonumber \\
&  \geq  \frac{1}{\tau+n(1-\tau)} \big(\tau f(\lambda) + (1-\tau)\sigma_1(\lambda)f(e)\big) \nonumber \\
& \stackrel{\eqref{98}}{\geq}  \frac{1}{\tau+n(1-\tau)} \bigg(\tau f(\lambda) + (1-\tau)\frac{nf(\lambda)}{f(e)}f(e)\bigg)  = f(\lambda). 
\end{align*}

By the comparison principle in Proposition \ref{174}, it follows that $\widetilde{u} \leq u^\tau_\ep$ in $M\backslash M_\delta$, and therefore $\widetilde{u} \leq u_\ep^\tau$ in $M\backslash \partial M$. Taking $\ep\rightarrow 0$ and then $\tau\rightarrow 1$, it follows that $\widetilde{u} \leq u$ in $M\backslash \partial M$, as claimed. 
\end{proof}

\begin{appendices}

\section{Proof of Proposition \ref{46}: a cone property}\label{AA}

\begin{proof}[Proof of Proposition \ref{46}]
The result is essentially a consequence of \cite[Theorem 1.4]{Yuan22}. We summarise the details here for the convenience of the reader. Let $\Gamma$ be any cone satisfying \eqref{21'} and \eqref{22'}, and denote 
	\begin{equation*}
	\kappa_\Gamma = \max\{k:(\underbrace{0,\dots,0}_{k},\underbrace{1,\dots,1}_{n-k})\in\Gamma\}. 
	\end{equation*}
	Assume for now that there exists a constant $\theta=\theta(n,\Gamma)>0$ such that, whenever $\lambda\in\Gamma$ with $\lambda_1\geq \dots \geq \lambda_n$, it holds that
	\begin{equation}\label{37}
	\frac{\partial f}{\partial \lambda_i}(\lambda) \geq \theta \sum_{j=1}^n \frac{\partial f}{\partial\lambda_j}(\lambda) \quad\text{for }i \geq n-\kappa_\Gamma.  
	\end{equation}
	Since $\kappa_\Gamma=0$ if and only $\Gamma = \Gamma_n^+$, we see that $\kappa_\Gamma \geq 1$ whenever $\Gamma\not=\Gamma_n^+$, and thus \eqref{37'} holds for $i\in\{n-1,n\}$. Also, it is easy to see that $\kappa_\Gamma$ is equal to the maximum number of negative entries a vector in $\Gamma$ can have, i.e.
	\begin{align*}
	\kappa_\Gamma = \max\{k: (-\alpha_1,\dots,-\alpha_k, \alpha_{k+1}, \dots, \alpha_n)\in\Gamma,~\alpha_j>0  \text{ for all } 1\leq j \leq n\}. 
	\end{align*}
	Thus \eqref{37'} also holds if $\lambda_i\leq 0$.
	
	It remains to justify \eqref{37}, for which we follow \cite{Yuan22}. By concavity, $f_i(\lambda) \geq f_j(\lambda)$ whenever $\lambda_i \leq \lambda_j$. In particular, our ordering implies
	\begin{align*}
	\frac{\partial f}{\partial \lambda_n}(\lambda) \geq \frac{1}{n}\sum_{j=1}^n\frac{\partial f}{\partial\lambda_j}(\lambda),
	\end{align*}
	which establishes \eqref{37} for $\Gamma = \Gamma_n^+$. 
	
	On the other hand, for a general cone $\Gamma$ satisfying \eqref{21'} and \eqref{22'}, we have
	\begin{equation}\label{42}
	\sum_{i=1}^n f_i(\lambda)\mu_i>0 \quad \text{whenever }\lambda, \mu\in\Gamma.
	\end{equation}
	Suppose $\Gamma\not=\Gamma_n^+$, in which case it is clear that $\kappa_\Gamma>0$, and fix any $\alpha_1,\dots,\alpha_n>0$ such that
	\begin{equation*}
	(-\alpha_1, \dots, -\alpha_{\kappa_\Gamma}, \alpha_{\kappa_\Gamma + 1}, \dots, \alpha_n)\in\Gamma. 
	\end{equation*}
	Then \eqref{42} implies 
	\begin{align}\label{43}
	\sum_{i=\kappa_\Gamma + 1}^n \alpha_i f_{n-i+1}(\lambda) - \sum_{i=1}^{\kappa_\Gamma} \alpha_i f_{n-i+1}(\lambda)>0.
	\end{align}
	We may assume $\alpha_1 \geq \dots \geq \alpha_{\kappa_\Gamma}$, in which case \eqref{43} implies
	\begin{equation*}
	f_{n-\kappa_\Gamma }(\lambda) > \frac{\alpha_1}{\sum_{i=\kappa_\Gamma+1}^n \alpha_i} f_n(\lambda).
	\end{equation*}
	The desired estimate then follows for all $i\geq n-\kappa_\Gamma$, again by our ordering. 
\end{proof}

\section{The Schouten tensor for a radial conformal factor}\label{appb}

In this appendix we prove the formula \eqref{50}. In normal coordinates, $r = \sqrt{x_1^2 + \dots + x_n^2}$, and therefore $\partial_i v(r) = \frac{x_i}{r}v_r$. It follows that 
\begin{equation*}
|\nabla_{g_0} v|_{g_0}^2 = g_0^{ij}\partial_i v\partial_j v = \frac{g_0^{ij}x_ix_j}{r^2}v_r^2 = v_r^2, 
\end{equation*}
where we have used the fact that $\frac{\partial}{\partial r} = \frac{x_i}{\sqrt{x_1^2 + \dots + x_n^2}}\frac{\partial }{\partial x_i}$ has unit magnitude. Moreover,
\begin{align*}
(\nabla_{g_0}^2 v)_{ij} = \partial_i\partial_j v - \Gamma_{ij}^k \partial_k v = \frac{\delta_{ij}}{r}v_r + \frac{x_ix_j}{r}\bigg(\frac{v_{rr}}{r}- \frac{v_r}{r^2}\bigg) - \Gamma_{ij}^k \partial_k v. 
\end{align*}
Combining the above, we therefore see that 
\begin{align*}
(g_v^{-1}A_{g_v})^p_j & = v^2(g_0^{-1}A_{g_v})^p_j  = v^2 g_0^{pi}(A_{g_v})_{ij} \nonumber \\
& = v^2\bigg[\frac{g_0^{pi}\delta_{ij}}{vr}v_r + g_0^{pi}\frac{x_ix_j}{vr}\bigg(\frac{v_{rr}}{r}- \frac{v_r}{r^2}\bigg) - g_0^{pi}\frac{\Gamma_{ij}^kx_kv_r}{vr} - \frac{v_r^2}{2v^2}\delta_j^p + (g_0^{-1}A_{g_0})_j^p \bigg].
\end{align*}
Now write $g_0^{pi} = \delta^{pi} + \chi^{pi}$ where $\chi = O(r^2)$ as $r\rightarrow 0$. Then 
\begin{align*}
(g_v^{-1}A_{g_v})^p_j & = v^2\bigg[\frac{\delta_j^p}{vr}v_r + \frac{x^px_j}{vr}\bigg(\frac{v_{rr}}{r}- \frac{v_r}{r^2}\bigg) - \frac{v_r^2}{2v^2}\delta_j^p \bigg] \nonumber \\
& \qquad + \underbrace{v^2\bigg[\frac{\chi^{pi}\delta_{ij}}{vr}v_r + \chi^{pi}\frac{x_ix_j}{vr}\bigg(\frac{v_{rr}}{r}- \frac{v_r}{r^2}\bigg) - g_0^{pi}\frac{\Gamma_{ij}^kx_kv_r}{vr} + (g_0^{-1}A_{g_0})_j^p\bigg]}_{= \Psi_j^p} \nonumber \\
& = v^2\bigg(\lambda\delta_j^p + \chi\frac{x^px_j}{r^2}\bigg) + \Psi_j^p,
\end{align*}
where $\lambda$ and $\chi$ are as in \eqref{16}. Now, since $\chi = O(r^2)$ we have
\begin{equation*}
v^2\frac{\chi^{pi}\delta_{ij}}{vr}v_r = O(r)v|v_r| \quad \text{and}\quad v^2\chi^{pi}\frac{x_ix_j}{vr}\bigg(\frac{v_{rr}}{r}-\frac{v_r}{r^2}\bigg) = O(r^2)v|v_{rr}| + O(r)v|v_r|,
\end{equation*}
and since $\Gamma_{ij}^k = O(r)$ and $(g_0^{-1}A_{g_0})^p_j = O(r)$, we also have
\begin{equation*}
v^2g_0^{pi}\frac{\Gamma_{ij}^kx_kv_r}{vr} = O(r)v|v_r| \quad\text{and}\quad v^2(g_0^{-1}A_{g_0})_j^p = O(r)v^2. 
\end{equation*}
The claim \eqref{50} then follows. 
\end{appendices}

\renewcommand{\baselinestretch}{0.9}
\small
\bibliography{references}{}

\begin{thebibliography}{10}

\bibitem{AILA18}
{\sc P.~T. Allen, J.~Isenberg, J.~M. Lee, and I.~S. Allen}, {\em Weakly
  asymptotically hyperbolic manifolds}, Comm. Anal. Geom., 26 (2018),
  pp.~1--61.

\bibitem{ACF92}
{\sc L.~Andersson, P.~T. Chru\'{s}ciel, and H.~Friedrich}, {\em On the
  regularity of solutions to the {Y}amabe equation and the existence of smooth
  hyperboloidal initial data for {E}instein's field equations}, Comm. Math.
  Phys., 149 (1992), pp.~587--612.

\bibitem{Aub70}
{\sc T.~Aubin}, {\em M\'{e}triques riemanniennes et courbure}, J. Differential
  Geometry, 4 (1970), pp.~383--424.

\bibitem{Av82}
{\sc P.~Aviles}, {\em A study of the singularities of solutions of a class of
  nonlinear elliptic partial differential equations}, Comm. Partial
  Differential Equations, 7 (1982), pp.~609--643.

\bibitem{AM88}
{\sc P.~Aviles and R.~C. McOwen}, {\em Complete conformal metrics with negative
  scalar curvature in compact {R}iemannian manifolds}, Duke Math. J., 56
  (1988), pp.~395--398.

\bibitem{CLN13}
{\sc L.~Caffarelli, Y.~Y. Li, and L.~Nirenberg}, {\em Some remarks on singular
  solutions of nonlinear elliptic equations {III}: viscosity solutions
  including parabolic operators}, Comm. Pure Appl. Math., 66 (2013),
  pp.~109--143.

\bibitem{CGY02a}
{\sc S.-Y.~A. Chang, M.~J. Gursky, and P.~C. Yang}, {\em An equation of
  {M}onge-{A}mp\`ere type in conformal geometry, and four-manifolds of positive
  {R}icci curvature}, Ann. of Math. (2), 155 (2002), pp.~709--787.

\bibitem{CHY05}
{\sc S.-Y.~A. Chang, Z.-C. Han, and P.~C. Yang}, {\em Classification of
  singular radial solutions to the {$\sigma_k$} {Y}amabe equation on annular
  domains}, J. Differential Equations, 216 (2005), pp.~482--501.

\bibitem{Che05}
{\sc S.-y.~S. Chen}, {\em Local estimates for some fully nonlinear elliptic
  equations}, Int. Math. Res. Not.,  (2005), pp.~3403--3425.

\bibitem{CLL23}
{\sc B.~Chu, Y.~Y. Li, and Z.~Li}, {\em Liouville theorems for conformally
  invariant fully nonlinear equations}, In preparation.

\bibitem{DN22}
{\sc J.~A.~J. Duncan and L.~Nguyen}, {\em Differential inclusions for the
  {S}chouten tensor and nonlinear eigenvalue problems in conformal geometry},
  Adv. Math., 432 (2023), p.~Paper No. 109263.

\bibitem{Ev82}
{\sc L.~C. Evans}, {\em Classical solutions of fully nonlinear, convex,
  second-order elliptic equations}, Comm. Pure Appl. Math., 35 (1982),
  pp.~333--363.

\bibitem{Finn98}
{\sc D.~L. Finn}, {\em Existence of positive solutions to {$\Delta_gu=u^q+Su$}
  with prescribed singularities and their geometric implications}, Comm.
  Partial Differential Equations, 23 (1998), pp.~1795--1814.

\bibitem{GLN18}
{\sc M.~d.~M. Gonz\'{a}lez, Y.~Y. Li, and L.~Nguyen}, {\em Existence and
  uniqueness to a fully nonlinear version of the {L}oewner-{N}irenberg
  problem}, Commun. Math. Stat., 6 (2018), pp.~269--288.

\bibitem{GW17}
{\sc A.~R. Gover and A.~Waldron}, {\em Renormalized volume}, Comm. Math. Phys.,
  354 (2017), pp.~1205--1244.

\bibitem{Gr17}
{\sc C.~R. Graham}, {\em Volume renormalization for singular {Y}amabe metrics},
  Proc. Amer. Math. Soc., 145 (2017), pp.~1781--1792.

\bibitem{Guan08}
{\sc B.~Guan}, {\em Complete conformal metrics of negative {R}icci curvature on
  compact manifolds with boundary}, Int. Math. Res. Not. IMRN,  (2008).
\newblock Art. ID rnn 105, 25pp.

\bibitem{GW03b}
{\sc P.~Guan and G.~Wang}, {\em Local estimates for a class of fully nonlinear
  equations arising from conformal geometry}, Int. Math. Res. Not.,  (2003),
  pp.~1413--1432.

\bibitem{GSW11}
{\sc M.~J. Gursky, J.~Streets, and M.~Warren}, {\em Existence of complete
  conformal metrics of negative {R}icci curvature on manifolds with boundary},
  Calc. Var. PDE, 41 (2011), pp.~21--43.

\bibitem{GV03b}
{\sc M.~J. Gursky and J.~A. Viaclovsky}, {\em Fully nonlinear equations on
  {R}iemannian manifolds with negative curvature}, Indiana Univ. Math. J., 52
  (2003), pp.~399--419.

\bibitem{HJS20}
{\sc Q.~Han, X.~Jiang, and W.~Shen}, {\em The {L}oewner-{N}irenberg {P}roblem
  in {C}ones}, https://arxiv.org/pdf/2012.06799.pdf,  (2020).

\bibitem{HS20}
{\sc Q.~Han and W.~Shen}, {\em The {L}oewner-{N}irenberg problem in singular
  domains}, J. Funct. Anal., 279 (2020), pp.~108604, 43.

\bibitem{Jia21}
{\sc X.~Jiang}, {\em Boundary expansion for the {L}oewner-{N}irenberg problem
  in domains with conic singularities}, J. Funct. Anal., 281 (2021), pp.~Paper
  No. 109122, 41.

\bibitem{JLL07}
{\sc Q.~Jin, A.~Li, and Y.~Y. Li}, {\em Estimates and existence results for a
  fully nonlinear {Y}amabe problem on manifolds with boundary}, Calc. Var. PDE,
  28 (2007), pp.~509--543.

\bibitem{Kho09}
{\sc S.~Khomrutai}, {\em Regularity of singular solutions to sigma(k)-{Y}amabe
  problems}, ProQuest LLC, Ann Arbor, MI, 2009.
\newblock Thesis (Ph.D.)--University of Notre Dame.

\bibitem{Kry82}
{\sc N.~V. Krylov}, {\em Boundedly inhomogeneous elliptic and parabolic
  equations}, Izv. Akad. Nauk SSSR Ser. Mat., 46 (1982), pp.~487--523, 670.

\bibitem{Kry83}
\leavevmode\vrule height 2pt depth -1.6pt width 23pt, {\em Boundedly
  inhomogeneous elliptic and parabolic equations in a domain}, Izv. Akad. Nauk
  SSSR Ser. Mat., 47 (1983), pp.~75--108.

\bibitem{LL03}
{\sc A.~Li and Y.~Y. Li}, {\em On some conformally invariant fully nonlinear
  equations}, Comm. Pure Appl. Math., 56 (2003), pp.~1416--1464.

\bibitem{L22}
{\sc G.~Li}, {\em A flow approach to the generalized {L}oewner-{N}irenberg
  problem of the {$\sigma_k$}-{R}icci equation}, Calc. Var. Partial
  Differential Equations, 61 (2022), pp.~Paper No. 169, 34.

\bibitem{Li22}
\leavevmode\vrule height 2pt depth -1.6pt width 23pt, {\em Two flow approaches
  to the {L}oewner-{N}irenberg problem on manifolds}, J. Geom. Anal., 32
  (2022), pp.~Paper No. 7, 30.

\bibitem{Li09}
{\sc Y.~Y. Li}, {\em Local gradient estimates of solutions to some conformally
  invariant fully nonlinear equations}, Comm. Pure Appl. Math., 62 (2009),
  pp.~1293--1326.

\bibitem{LN14b}
{\sc Y.~Y. Li and L.~Nguyen}, {\em Harnack inequalities and {B}\^{o}cher-type
  theorems for conformally invariant, fully nonlinear degenerate elliptic
  equations}, Comm. Pure Appl. Math., 67 (2014), pp.~1843--1876.

\bibitem{LN20b}
\leavevmode\vrule height 2pt depth -1.6pt width 23pt, {\em Solutions to the
  {$\sigma_k$}-{L}oewner-{N}irenberg problem on annuli are locally {L}ipschitz
  and not differentiable}, J. Math. Study, 54 (2021), pp.~123--141.

\bibitem{LNW18}
{\sc Y.~Y. Li, L.~Nguyen, and B.~Wang}, {\em Comparison principles and
  {L}ipschitz regularity for some nonlinear degenerate elliptic equations},
  Calc. Var. Partial Differential Equations, 57 (2018), pp.~Paper No. 96, 29.

\bibitem{LNX22}
{\sc Y.~Y. Li, L.~Nguyen, and J.~Xiong}, {\em Regularity of viscosity solutions
  of the $\sigma_k$-{L}oewner-{N}irenberg problem}, Proc. Lond. Math. Soc.,
  (2023. In press. https://doi.org/10.1112/plms.12536).

\bibitem{LN74}
{\sc C.~Loewner and L.~Nirenberg}, {\em Partial differential equations
  invariant under conformal or projective transformations}, in Contributions to
  analysis (a collection of papers dedicated to {L}ipman {B}ers), Academic
  Press, New York-London, 1974, pp.~245--272.

\bibitem{Maz91}
{\sc R.~Mazzeo}, {\em Regularity for the singular {Y}amabe problem}, Indiana
  Univ. Math. J., 40 (1991), pp.~1277--1299.

\bibitem{Sch84}
{\sc R.~M. Schoen}, {\em Conformal deformation of a {R}iemannian metric to
  constant scalar curvature}, J. Differential Geom., 20 (1984), pp.~479--495.

\bibitem{Tru68}
{\sc N.~S. Trudinger}, {\em Remarks concerning the conformal deformation of
  {R}iemannian structures on compact manifolds}, Ann. Scuola Norm. Sup. Pisa
  Cl. Sci. (3), 22 (1968), pp.~265--274.

\bibitem{Ver81}
{\sc L.~V\'{e}ron}, {\em Singularit\'{e}s \'{e}liminables d'\'{e}quations
  elliptiques non lin\'{e}aires}, J. Differential Equations, 41 (1981),
  pp.~87--95.

\bibitem{Via00a}
{\sc J.~A. Viaclovsky}, {\em Conformal geometry, contact geometry, and the
  calculus of variations}, Duke Math. J., 101 (2000), pp.~283--316.

\bibitem{Via02}
\leavevmode\vrule height 2pt depth -1.6pt width 23pt, {\em Estimates and
  existence results for some fully nonlinear elliptic equations on {R}iemannian
  manifolds}, Comm. Anal. Geom., 10 (2002), pp.~815--846.

\bibitem{Wan06}
{\sc X.-J. Wang}, {\em A priori estimates and existence for a class of fully
  nonlinear elliptic equations in conformal geometry}, Chinese Ann. Math. Ser.
  B, 27 (2006), pp.~169--178.

\bibitem{W21}
{\sc Y.~Wang}, {\em Boundary expansions of complete conformal metrics with
  negative {R}icci curvatures}, Calc. Var. Partial Differential Equations, 60
  (2021), pp.~Paper No. 131, 24.

\bibitem{Yam60}
{\sc H.~Yamabe}, {\em On a deformation of {R}iemannian structures on compact
  manifolds}, Osaka Math. J., 12 (1960), pp.~21--37.

\bibitem{Yuan22}
{\sc R.~Yuan}, {\em The partial uniform ellipticity and prescribed problems on
  the conformal classes of complete metrics}, https://arxiv.org/abs/2203.13212,
   (2022).

\end{thebibliography}
\bibliographystyle{siam}

	\end{document}